\documentclass[12pt, reqno]{amsart}

\usepackage[
backend=biber,
style=alphabetic,
doi=false,isbn=false,url=false]{biblatex}
\usepackage[english]{babel}
\usepackage{graphicx}
\usepackage{subcaption}
\usepackage{amssymb}
\usepackage{amsthm}
\usepackage{amsmath}
\usepackage{listings}
\usepackage{lineno}
\usepackage[margin=3cm]{geometry}
\usepackage[all,cmtip, color,matrix,arrow]{xy}
\usepackage{marvosym}
\usepackage{tipa}
\usepackage{comment}
\usepackage[shortlabels]{enumitem}
\usepackage{nicematrix}

\usepackage{amsmath}
\usepackage[utf8]{inputenc}
\usepackage{bbold}
\usepackage{pifont}
\usepackage{bm}
\usepackage{wrapfig}
\usepackage{bbold}
\usepackage{subcaption}
\usepackage{float}
\usepackage{mathtools}
\usepackage{aliascnt}
\newaliascnt{eqfloat}{equation}
\newfloat{eqfloat}{h}{eqflts}
\floatname{eqfloat}{Equation}
\usepackage{dirtytalk}
\usepackage{wasysym}
\usepackage{amsaddr}
\usepackage{tipa}
\usepackage{tikz-cd}
\usepackage{tikz-3dplot}

\definecolor{turkisD}{HTML}{579C87}
\definecolor{grunH}{HTML}{A1C181}
\definecolor{gelb}{HTML}{FCCA46}
\definecolor{orange}{HTML}{FE7F2D}
\definecolor{blauD}{HTML}{233D4D}
\usepackage{hyperref}
\hypersetup{
 colorlinks=true,
 linkcolor=blauD, citecolor=blauD,
}
\usepackage[capitalize]{cleveref}  
\crefname{thm}{Theorem}{Theorems}

\newcommand*{\ORGeqfloat}{}
\let\ORGeqfloat\eqfloat
\def\eqfloat{%
  \let\ORIGINALcaption\caption
  \def\caption{%
    \addtocounter{equation}{-1}%
    \ORIGINALcaption
  }%
  \ORGeqfloat
}

\newcommand{\Def}[1]{{\bf #1}}

\newtheorem{thm}{Theorem}[section]
\newcommand{\newsharedtheorem}[2]{%
  \newaliascnt{#1}{thm}\newtheorem{#1}[#1]{#2}\aliascntresetthe{#1}}
\newsharedtheorem{prop}{Proposition}
\newsharedtheorem{lm}{Lemma}
\newsharedtheorem{cor}{Corollary}
\newsharedtheorem{claim}{Claim}
\newsharedtheorem{conj}{Conjecture}
\newtheorem*{conj:kernel}{Conjecture \ref{conj:kernelspan}} 

\theoremstyle{definition}
\newsharedtheorem{defin}{Definition}
\newsharedtheorem{smpl}{Example}
\newsharedtheorem{obs}{Observation}
\newsharedtheorem{quest}{Question}
\newsharedtheorem{prob}{Problem}
\newsharedtheorem{rem}{Remark}
\newsharedtheorem{algo}{Algorithm}
\newsharedtheorem{constr}{Construction}

\crefname{thm}{Theorem}{Theorems}
\crefname{prop}{Proposition}{Propositions}
\crefname{lm}{Lemma}{Lemmas}
\crefname{cor}{Corollary}{Corollaries}
\crefname{claim}{Claim}{Claims}
\crefname{conj}{Conjecture}{Conjectures}
\crefname{defin}{Definition}{Definitions}
\crefname{smpl}{Example}{Examples}
\crefname{obs}{Observation}{Observations}
\crefname{quest}{Question}{Questions}
\crefname{prob}{Problem}{Problems}
\crefname{rem}{Remark}{Remarks}
\crefname{algo}{Algorithm}{Algorithms}
\crefname{constr}{Construction}{Constructions}
\DeclareMathOperator{\im}{im}

\DeclareMathOperator{\rk}{\mathrm{rk}}

\DeclareMathOperator{\spn}{\mathrm{span}}
\DeclareMathOperator{\cone}{\mathrm{cone}}

\DeclareMathOperator{\QSym}{\mathtt{QSym}}
\DeclareMathOperator{\WQSym}{\mathtt{WQSym}}
\DeclareMathOperator{\ncPsi}{\text{\Neptune}}
\DeclareMathOperator{\Mat}{\mathtt{Mat}} 
\newcommand{\unifMat}[2]{\mathtt{U}_{#1}^{#2}}
\DeclareMathOperator{\MatLinSpace}{\mathtt{Mat}}
\DeclareMathOperator{\PolSpace}{\mathtt{PS}}
\DeclareMathOperator{\cPsi}{\boldsymbol{\Psi}}

\DeclareMathOperator{\conv}{conv}
\newcommand\rspan{\mathrm{span}_\R}
\DeclareMathOperator{\cl}{cl}

\newcommand{\opi}{\vec{\boldsymbol{\pi}}}
\newcommand{\otau}{\vec{\boldsymbol{\tau}}}
\newcommand{\osigma}{\vec{\boldsymbol{\sigma}}}

\newcommand{\redcross}{\textcolor{red}{\ding{54}}}

\newcommand{\SM}{\mathrm{SM}} 
\newcommand{\leqG}{\preceq_\text{Gale}} 
\newcommand{\leqP}{\preceq_\text{path}} 
\newcommand{\bases}{\mathcal{B}} 
\newcommand{\neighborhood}{\mathit{N}}
\newcommand{\matroid}{\mathrm{M}} 
\newcommand{\natroid}{\mathrm{N}} 
\newcommand{\uniformatroid}[2]{\mathrm{U}_{#1}^{#2}}
\newcommand{\groundset}{U} 
\newcommand{\othergroundset}{V} 
\newcommand{\coloop}[1]{\widetilde{#1}} 
\newcommand{\nd}{{d}} 
\DeclareMathOperator{\rank}{rk}
\newcommand{\wqscmatrix}{\mathbb{M}} 
\newcommand{\wqscmatrixconj}{\mathbb{N}} 

\newcommand{\bw}{\mathbf{w}} 
\newcommand{\bx}{\mathbf{x}} 
\newcommand{\al}{\text{\textbf{\textscripta}}} 

\newcommand{\vv}{\mathsf{v}}

\newcommand{\vx}{\mathsf{x}}
\newcommand{\vy}{\mathsf{y}}

\newcommand{\ve}{\mathsf{e}}
\newcommand{\be}{\mathsf{e}}

\newcommand{\onebb}{\mathbb{1}} 

\newcommand{\Pol}{\mathsf{P}}
\newcommand{\setofPol}{\mathcal{P}} 
\newcommand{\subdiv}{\mathcal{S}} 
\newcommand{\Qol}{\mathsf{Q}}
\newcommand{\hypersimplex}[2]{\Delta_{#1}^{#2}}
\newcommand{\normalcone}{\mathsf{N}}
\newcommand{\face}{\mathsf{F}} 
\newcommand{\Nfan}{{\mathcal{N}}} 
\newcommand{\Hyp}{\mathsf{H}} 
\newcommand{\BraidA}{\mathcal{B}} 
\newcommand{\BraidC}{\mathsf{B}} 
\newcommand{\vertex}{\mathsf{v}}

\newcommand{\Kone}{\mathsf{K}}
\newcommand{\Done}{\mathsf{D}}
 \newcommand{\stanperm}{\Pi}

\newcommand{\Z}{\mathbb{Z}}

\newcommand{\R}{\mathbb{R}}

\newcommand{\bbone}{\mathbb{1}}

\newcommand{\Mco}{\text{\scorpio}}
\newcommand{\Mnco}{\text{\Scorpio}}

\newcommand{\uX}{\underline{X}} 
\newcommand{\uY}{\underline{Y}} 
\newcommand{\ur}{\underline{r}} 
\newcommand{\NEP}{\mathfrak{P}_\mathsf{NE}} 
\newcommand{\Path}{\mathfrak{p}} 
\newcommand{\Qath}{\mathfrak{q}} 

\newcommand{\Sym}{\mathfrak{S}} 

\newcommand{\indicatorgroup}{\mathbb{I}}

\newcommand{\Eulerian}{A} 

\newcommand{\linorder}{\ell}

\begin{document}

\title{Chromatic word-quasisymmetric functions of matroids}

\author{Raul Penaguiao\textsuperscript{1}}
\email{raul.penaguiao@proton.me}
\address{University of Basel}
\footnotetext[1]{Computational Physiology and Biostatistics Group, Department of Biomedical Engineering, University of Basel, Basel, Switzerland}
\author{Sophie Rehberg}
\email{rehberg.sophie@uqam.ca}
\address{Université du Québec à Montréal}
\keywords{Hopf algebras, nested matroids, Schubert matroids, quasisymmetric functions, non-commutative, valuative functions}
\subjclass[2010]{52B40, 05E05, 05B35}
\date{September 23, 2026} 

\begin{abstract}
Billera, Jia, and Reiner (2009) introduced the quasisymmetric functions of matroids and showed that this defines a Hopf algebra homomorphism which is a valuative invariant, i.e., isomorphic matroids have the same quasisymmetric function and polytopal subdivisions of matroid base polytopes define relations among the corresponding quasisymmetric functions.
In this project we study an analogue in non-commuting variables, the word-quasisymmetric functions.
To every matroid $M$ we associate a word-quasisymmetric function $\psi(M)$ and call this the chromatic word-quasisymmetric functions of a matroid.

Matroids and word-quasisymmetric functions form Hopf algebras, and our map $\psi$ between them is a homomorphism.
We want to study the kernel, equivalently the image, of the map $\psi$ from matroids to word-quasisymmetric functions, that is, we would like to understand which matroids are indistinguishable by the chromatic word-quasisymmetric functions.
The map $\psi$ is not an invariant, but we can show that it is valuative.
Using Schubert matroids and nested matroids, special classes of matroids, we prove a lower bound of $2^d-d$ for
the rank of the map $\psi$ from matroids to the chromatic word-quasisymmetric functions in degree $d$
and conjecture the upper bound of $d!$ is tight.

\end{abstract}

\maketitle

\tableofcontents

\section*{Acknowledgments}

The authors would like to thank Federico Ardila, Chris Eur, Alex Fink, Mieke Fink, and Georg Loho for helpful conversations.
Part of this work was done during a week-long visit at the MPI Leipzig.
The first author is very appreciative of helpful comments from Hunter Spink.
The second author was partially funded by the Deutsche Forschungsgemeinschaft (DFG, German Research Foundation) under Germany's Excellence Strategy – The Berlin Mathematics
Research Center MATH+ (EXC-2046/1, project ID: 390685689).

\section*{Disclosure of AI assistance}

During the preparation of this manuscript, the authors used Claude Sonnet 5 to develop code for the web interface, get suggestions for code snippets used in computations and for editing parts of the exposition.

All mathematical arguments and results are original work by the authors.

\section{Introduction}

A \Def{matroid} $\matroid=(\{1,\dots,\nd\},\bases)$ is a collection $\bases$ of subsets $B\subseteq\{1,\dots,\nd\}$ called bases, fulfilling certain axioms (see \cref{ssec:prelimMatroids} for precise definition).
For a weight function $f\colon\{1,\dots,\nd\}\to\R$ we define the weight of a basis $B\in\bases$ by
\begin{equation*}
  f(B)=\sum_{i\in B}f(i)\,.
\end{equation*}
Given a matroid $\matroid$, we say that a weight function $f$ is \Def{$\matroid$-generic} if there exists a unique basis $B$ in $\matroid$ with maximal value $f(B)$.

Consider, for example, the matroid $\unifMat{1}{3}=(\{1,2,3\}, \{\{1\}, \{2\}, \{3\}\})$.
Then a function $f\colon\{1,2,3\}\to\R$ is $\unifMat{1}{3}$-generic if and only if it has a unique maximal value among $\{f(1), f(2), f(3)\}$.

Let $\bw = \bw_1, \bw_2, \ldots $ be an infinite collection of non-commuting variables.
For every $f: [\nd] \to \Z_{> 0} $, we define the monomial $\bw_{f} = \bw_{f(1)} \bw_{f(2)} \cdots \bw_{f(\nd)}$
and for a matroid $\matroid=([\nd],\bases)$ we define the \Def{chromatic word-quasisymmetric function of $\matroid$} by
\begin{equation*}
 \ncPsi(\matroid) \coloneqq \sum_{\substack{f\colon[\nd]\to \Z_{\geq1}\\ \text{ is $\matroid$-generic}}} \bw_f\,.
\end{equation*}
This defines a map $\ncPsi : \MatLinSpace \to \WQSym$ from the matroids $\MatLinSpace$ to the word-quasisymmetric functions $\WQSym$, i.e., formal power series in the non-commuting variables ${\bw = \bw_1, \bw_2, \ldots }$ with certain symmetry conditions (see \cref{ssec:chrmwqsym}).

 Continuing the example from above, the chromatic word-quasisymmetric function of the rank one uniform matroid on three elements is
\begin{equation*}
  \ncPsi(\unifMat{1}{3}) = \sum_{i\neq j, j\neq k, k\neq i}\bw_i\bw_j\bw_k + \sum_{i<j} \bw_i^2\bw_j + \sum_{i<j} \bw_i\bw_j\bw_i + \sum_{i<j} \bw_j\bw_i^2\,.
\end{equation*}
In \Cref{ex:wordquasisym_new} we will give a more detailed discussion.
We will also see that the chromatic word-quasisymmetric function of a matroid is very closely related to the normal fan of the matroid base polytope (\Cref{sec:preliminaries}).

After allowing the variables in the chromatic word-quasisymmetric function of a matroid to commute we recover the quasisymmetric functions of matroids introduced by Billera--Jia--Reiner \cite{billera2009quasisymmetric}.
While the quasisymmetric functions of matroids are an isomorphism invariant, this is not the case for the chromatic word-quasisymmetric functions of matroids.

In this paper we study the map $\ncPsi : \MatLinSpace \to \WQSym$.
The linear span of matroids, $\MatLinSpace$, as well as word-quasisymmetric functions $\WQSym$ carry the structure of graded Hopf algebras and we will see that the map $\ncPsi $ is a graded Hopf algebra homomorphism (see \cref{sec:hopfmonoid}).
The Hopf algebra structure is not essential to the main proofs but provides additional structural insight and motivation.
The algebra $\WQSym$ plays an important role among combinatorial Hopf algebras: it is the terminal object in the category of combinatorial Hopf algebras (see \cite[Theorem 11.23]{aguiar2010monoidal}, \cite[Theorem 49]{penaguiao2020kernel}), meaning that any such morphism from matroids factors through it, making $\ncPsi$ the universal chromatic map.

We want to describe the kernel of $\ncPsi$, and compute the dimension of $\im\ncPsi\subseteq\WQSym$, which directly informs how well this map can distinguish matroids.
For the quasisymmetric function of matroids, this question is completely solved by combining results in \cite{billera2009quasisymmetric} and \cite{derksen2010valuative}.
We describe this in detail in \Cref{sec:qsym}.
Even though the map $\ncPsi\colon\MatLinSpace\to\WQSym$ from matroids to chromatic word-quasisymmetric functions of matroids is not an isomorphism invariant, our research question is motivated by the so-called kernel problem for isomorphism invariants:
Isomorphism invariants can help to decide whether combinatorial objects are isomorphic, by giving a negative certificate, i.e.,  if for two objects the isomorphism invariant differs they are certainly not isomorphic.

This kind of question has been studied extensively:
In graphs, for instance, one can use the degree sequence \cite{martin2008distinguishing}; in matroids, the count of non-broken circuit (nbc) bases \cite{ardila2023lagrangian}; and in polytopes, the f-vector \cite{ziegler2004convex}.
Usually, invariants also agree on some non-isomorphic objects.
This leads to the kernel problem, that is, understanding for which (non-isomorphic) objects the invariant under consideration is constant.
One well-studied instance of this approach is the chromatic symmetric function for graphs introduced by Stanley \cite{stanley1995symmetric}. It is still an open conjecture and an active area of research whether the chromatic symmetric function distinguishes non-isomorphic trees.
Similar chromatic%
\footnote{\emph{Chromatic} in the sense that they encode data of maps assigning positive integers to the ground set, which  we interpret as assigning colors in $\Z_{>0}$.}
invariants that have been studied are, e.g.,
the quasisymmetric function for generalized permutahedra \cite{billera2009quasisymmetric},
as well as a quasisymmetric function for posets \cite{feray2020cyclic}.
In \cite{breuer_scheduling_2016} a geometric perspective on chromatic quasisymmetric functions for scheduling problems is developed.
This offers a unifying perspective on several of the aforementioned invariants.

Recent developments have shifted interest toward analogues in non-commuting variables %
of classical chromatic functions. In particular, Gebhard and Sagan introduced a non-commutative symmetric function for graphs \cite{gebhard2001chromatic}; Féray extended this to posets \cite{feray2020cyclic}; Breuer--Klivans study this for scheduling problems \cite{breuer_scheduling_2016}; and a similar construction was developed for hypergraphic polytopes in \cite{penaguiao2020kernel}.
These constructions suggest that richer algebraic structures might carry more distinguishing power than their commutative counterparts.
Chromatic functions in non-commuting variables tend to have interesting images.
This is also the case for chromatic word-quasisymmetric functions of matroids, as we will see in the following.

We first show that the map $\ncPsi\colon\MatLinSpace\to\WQSym $ is valuative (\cref{thm:wqsym_are_val} and \cref{cor:ncPsistronglyvaluative}).
This is an important property in order to compute relations that are contained in the kernel $\ker\ncPsi$ and, using results from Derksen--Fink \cite{derksen2010valuative}, allows us to find a generating set for the image $\im\ncPsi$.

Our main results are the following (see below for precise definitions):
\begin{thm}
\label{thm:kernelcontains}
The kernel of $\ncPsi$ contains the valuative relations (see Equation~\eqref{eq:valdef}) and the  loop-coloop relations (see \Cref{prop:loopcolooprelation} below).
In this way, the function $\ncPsi$ is determined by its values on loopless nested matroids.
\end{thm}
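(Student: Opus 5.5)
Three things need showing: (i) $\ncPsi$ kills valuative relations; (ii) $\ncPsi$ kills the loop--coloop relations; (iii) modulo (i) and (ii), every matroid is a combination of loopless nested matroids.

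For (i), I will describe $\ncPsi(\matroid)$ geometrically. A weight $f\colon[\nd]\to\Z_{\geq1}$ is $\matroid$-generic exactly when the linear functional $\langle f,\cdot\rangle$ is maximized on the base polytope $\Pol(\matroid)$ at a single vertex. In that case $f$ lies in the interior of a maximal cone of the normal fan $\Nfan(\Pol(\matroid))$. So the coefficient of $\bw_f$ in $\ncPsi(\matroid)$ is the value at $\Pol(\matroid)$ of the function
\[
\Pol\mapsto \bbone[\text{$\langle f,\cdot\rangle$ has a unique maximizer on $\Pol$}],
\]
evaluated at the single point $f$. For a fixed generic direction this indicator is valuative on polytopes. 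The reason is that the map $\Pol\mapsto \bbone_{\face_f(\Pol)}$ sends indicator functions to indicator functions of faces compatibly with inclusion--exclusion, and Euler characteristic is additive; together these show that "the face in direction $f$ is a point" is valuative. A nongeneric $f$ needs more care. The face $\face_f(\Pol)$ is still a matroid polytope, and its being a vertex is detected by $\chi$ together with the dimension count. A cleaner route is to perturb: write the indicator as a limit of an evaluation of $\bbone_{\face_{f+\epsilon g}}$, or use the known result that $\matroid\mapsto\bbone[\dim\face_f(\Pol(\matroid))=0]$ is valuative via Derksen--Fink's characterization. I plan to state this as \cref{thm:wqsym_are_val} and sum over $f$ coefficientwise. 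The real obstacle is nongeneric $f$, which have ties; that is where I would invest care, probably by showing that the "is a vertex" indicator of the face $\face_f$ is valuative using McMullen's theory of valuations on face maps.

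For (ii), I follow the recipe of \Cref{prop:loopcolooprelation}. A loop or coloop at element $i$ contributes nothing to genericity, since $f(i)$ is either never or always counted. Hence $\ncPsi$ of a matroid with loop at $i$ equals $\ncPsi$ of the same matroid with a coloop at $i$. Verifying the relation therefore reduces to checking genericity termwise.

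For (iii), I invoke Derksen--Fink and Hampe: the valuative quotient of $\MatLinSpace$ is spanned by Schubert matroids, and after reordering the ground set these are nested matroids. This does not require isomorphism invariance, because we keep labeled ground sets. Each nested matroid is a direct sum of a connected loopless part and loops. Using (ii) we swap loops for coloops, which keeps the matroid nested, so $\ncPsi$ is determined on loopless nested matroids.
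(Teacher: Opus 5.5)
Parts (ii) and (iii) essentially follow the paper. For (ii), adding a coloop shifts every basis weight by the constant $f(b)$, which matches the paper's observation that $\Pol_{\coloop{\matroid}}$ is a translate of $\Pol_\matroid$. Part (i), however, has a genuine gap, and it sits exactly where the content of the statement lies.

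If $f$ takes distinct values, then $\face_f(\Pol_\matroid)$ is a vertex for every matroid, because the normal fan coarsens the braid fan. So the coefficient of $\bw_f$ is identically $1$, which is just the (valuative) Euler characteristic. The whole statement therefore concerns non-generic $f$, and there you only list candidate strategies. Moreover, those strategies use only general polytope theory, and for general polytopes the claim is false. Subdivide a segment $\Pol=[a,c]$ into $\Pol_1=[a,b]$ and $\Pol_2=[b,c]$, take $f$ orthogonal to the segment, and set $\varphi(\Pol)=[\face_f(\Pol)\text{ is a point}]$. Then $\varphi(\Pol)-\varphi(\Pol_1)-\varphi(\Pol_2)+\varphi(\{b\})=1\neq 0$. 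Each of your candidate routes fails for a concrete reason:
\begin{itemize}
\item Composing the face map with $\chi$ only produces the constant $\chi(\face_f(\Pol))=1$, which cannot see whether the face is a point.
\item Dimension is not additive, so ``$\chi$ together with the dimension count'' does not give a valuation.
\item Appealing to valuativity of $\matroid\mapsto[\dim\face_f(\Pol_\matroid)=0]$ is circular: this function is exactly the coefficient of $\Mnco_{\opi(f)}$ in $\ncPsi(\matroid)$, so it is the theorem itself.
\end{itemize}

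The missing ingredient is a property specific to matroid subdivisions: they create no new vertices, since every basis of every piece $\matroid_I$ is a basis of $\matroid$. The paper's argument then runs as follows.
\begin{itemize}
\item For each basis $B$, the vertex cone $\Kone_B(\matroid)$ is the union of the cones $\Kone_B(\matroid_i)$ over the pieces containing $B$, and $\bigcap_{i\in I}\Kone_B(\matroid_i)=\Kone_B(\matroid_I)$.
\item Inclusion--exclusion gives a linear relation among the indicator functions of these closed cones.
\item Lawrence's polarity result (\cref{prop:magic_dual}) transfers this relation to the open polar cones $\Done^\circ(\Kone_B(\natroid))=\normalcone^\circ_{\Pol_\natroid}(\onebb_B)$.
\item Summing over $B$ and counting the points $\vy_f$ yields the relation for $\ncPsi$.
\end{itemize}
In the segment example it is precisely the new vertex $b$ that breaks this argument.

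Finally, be precise in (iii). \cref{thm:DF5.4_refomulated} is a statement about the indicator group $\PolSpace_\nd$, which is spanned by \emph{all} labeled nested matroids on $[\nd]$; the $2^\nd$ Schubert matroids $\SM(A)$ suffice only up to isomorphism. To use it you need $\ncPsi$ to factor through $\PolSpace$, i.e.\ strong valuativity, which follows from (i) via \cref{thm:DFweakvaluativeisstrong}. Hampe's count is needed only for the bound $\nd!$.
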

\Cref{thm:kernelcontains} implies that the dimension of the $\nd$th graded piece in the image $\im\ncPsi$ is at most $\nd!$.
We can further prove the following result.
\begin{thm}
\label{thm:lower_bound}
The image of the $\nd$-degree component $\ncPsi_\nd$ has dimension at least ${2^\nd - \nd}$.
Specifically, the chromatic word-quasisymmetric functions of a collection of $2^\nd - \nd$ Schubert matroids are linearly independent.
\end{thm}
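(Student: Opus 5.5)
We will index Schubert matroids on $[\nd]$ by nonempty subsets and extract from each $\ncPsi(\matroid)$ a leading term in the monomial basis of $\WQSym$. The goal is a family of $2^\nd-\nd$ such matroids whose leading terms are pairwise distinct, so that a triangularity argument gives linear independence.

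\textbf{Step 1: the monomial basis and the normal fan.} $\ncPsi(\matroid)$ is a sum over $\matroid$-generic $f$. Grouping the $f$ by the set composition (ordered set partition) of $[\nd]$ given by their level sets, ordered by value, we get
\[
\ncPsi(\matroid)=\sum_{\Phi} c_\Phi(\matroid)\, \mathbf{M}_\Phi,
\]
where $\mathbf{M}_\Phi$ is the monomial basis element indexed by the set composition $\Phi$. The coefficient $c_\Phi(\matroid)\in\{0,1\}$ equals $1$ exactly when the face of the braid fan indexed by $\Phi$ lies in the interior of a maximal cone of the normal fan of the base polytope $P_\matroid$. Equivalently, $c_\Phi(\matroid)=1$ exactly when $\matroid$ has a unique maximal-weight basis for weights that are constant on the blocks of $\Phi$ and decrease from block to block. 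By the greedy algorithm, this happens if and only if for every prefix union $S$ of blocks, $\rk(S)$ is attained uniquely. Concretely, writing $S_{j-1}\subset S_j$ for consecutive prefix unions, each restriction-contraction $\matroid|_{S_j}/S_{j-1}$ must have exactly one basis. That is, each block is a union of loops and coloops of the corresponding minor.

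\textbf{Step 2: choose the family.} For a Schubert matroid, which is a lattice path matroid determined by an upper-bound path, $\rk(S)$ is explicit. We choose Schubert matroids $\SM(A)$, indexed by subsets $A\subseteq[\nd]$ (their Gale-minimal basis), arranged so that the only non-free behaviour sits in the ground-set order. We discard the $\nd$ subsets that produce degenerate, coinciding functions, for example the singletons together with $\emptyset$ and $[\nd]$ adjusted appropriately. This leaves $2^\nd-\nd$ matroids.

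For each remaining $A$ we exhibit a specific set composition $\Phi_A$ with the following properties:
\begin{enumerate}[(i)]
\item $c_{\Phi_A}(\SM(A))=1$;
\item $c_{\Phi_A}(\SM(B))=0$ for all $B$ strictly later than $A$ in a fixed total order, for instance by size and then in Gale order.
\end{enumerate}
A natural candidate is the composition that splits $[\nd]$ into the maximal runs of elements of $A$ and of its complement, listed in the order that makes each block entirely coloops or entirely loops of the successive minors of $\SM(A)$. With this choice, (i) follows from Step 1, since Schubert matroids decompose nicely along such intervals.

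\textbf{Step 3: triangularity.} Properties (i) and (ii) make the coefficient matrix $\bigl(c_{\Phi_A}(\SM(B))\bigr)_{A,B}$ unitriangular. The $\ncPsi(\SM(A))$ are therefore linearly independent, which gives $\dim\im\ncPsi_\nd\ge 2^\nd-\nd$.

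The main obstacle is (ii). We must verify that for $B$ later than $A$, some prefix union $S$ of $\Phi_A$ has a non-unique maximal-rank subset in $\SM(B)$. The first attempt will be to take the first position where the lattice paths of $A$ and $B$ diverge and show that the corresponding prefix cuts through a non-trivial uniform-like segment of $\SM(B)$. If this comparison fails for the naive order, we will instead choose $\Phi_A$ as a vertex-cone refinement, namely a single permutation whose greedy basis is exactly $A$ with ties in $\SM(B)$ for other $B$. We will then redo the triangularity argument with the order adapted to that choice.
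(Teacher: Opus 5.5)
\textbf{There is a genuine gap: the triangular scheme in Step 2 cannot work as stated.}

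\emph{The family is forced.} The $\nd+1$ sets $\{1,\dots,k\}$, $0\le k\le \nd$, are exactly those for which $\SM(A)$ has a single basis, so they all have the same image. A family of $2^\nd-\nd$ Schubert matroids with independent images must therefore be all other $\SM(A)$ plus one point matroid. So you discard $\{1,\dots,k\}$ for $k<\nd$. The singletons $\{i\}$ with $i\ge 2$ must stay.

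\emph{The point matroid must come first.} It has $c_\Phi=1$ for every $\Phi$, so your condition (ii) forces it to the front of the order. In your size-then-Gale order it comes last.

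\emph{The scheme already fails at $\nd=3$.} After the point matroid come $\SM(\{2\}),\SM(\{3\}),\SM(\{1,3\}),\SM(\{2,3\})$. Every set composition of $[3]$ is generic for none, two, three, or all four of them, so no $\Phi$ can serve the second matroid.

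\emph{Your candidates fail concretely too.} For $A=\{2\}$ or $A=\{1,3\}$ the runs of $A$ and its complement are singletons, so $\Phi_A$ is a permutation. All-singleton set compositions are generic for every matroid, since the greedy algorithm never ties. This also rules out your fallback of ``a permutation with ties in $\SM(B)$''. (Minor: with the paper's indexing of $\Mnco_{\otau}$ the weights increase from block to block. This only relabels columns.)

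\textbf{The missing idea is to put \emph{non}-genericity on the diagonal and then use the all-ones row.} The paper takes $\opi_S=c_{\nd-l}|\cdots|c_2|c_1s_l|s_{l-1}|\cdots|s_1$, which has a single doubleton block $\{c_1,s_l\}$.

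\emph{Diagonal entries are $0$.} For $A=\{a_1<\dots<a_r\}$ not of the form $\{1,\dots,r\}$ one has $c_1<a_r$. So $A$ and $A\setminus\{a_r\}\cup\{c_1\}$ are tied maximal bases, and $\opi_A$ is not $\SM(A)$-generic.

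\emph{Entries above the diagonal are $1$.} $\opi_T$ is $\SM(A)$-generic whenever $|A|<|T|$, or $|A|=|T|$ and $T\not\leqG A$. The greedy algorithm processes $t_1,t_2,\dots$. Once $\{t_1,\dots,t_i,t_{i+1}\}$ is dependent in $\SM(A)$, so is every $\{t_1,\dots,t_i,t_j\}$ with $j>i$. Hence the doubleton block $\{c_1,t_m\}$ never produces a tie.

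\emph{Triangularity.} Order rows and columns by size and then a linear extension of $\leqG$. The matrix has zeros on the diagonal and ones above it, except the last row, that of $\SM([\nd])$, which is all ones. Subtracting that row from the others makes the matrix triangular with $-1$ on the remaining diagonal.

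Equivalently, your unitriangularity argument does go through with $\Phi_A=\opi_A$, if you change what it is applied to. Use $\ncPsi(\SM([\nd]))$, placed first, together with the differences $\ncPsi(\SM([\nd]))-\ncPsi(\SM(A))$, whose coefficients are $1-c_\Phi(\SM(A))$, in the reverse of the size-then-Gale order. Applied to the $\ncPsi(\SM(A))$ themselves, it cannot work.
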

The proof of this result is completely elementary, the main tools being the Gale order (see \cref{ssec:schubert} for a definition), the greedy algorithm and basic row operations on matrices.
We conjecture that the upper bound is tight:%
\footnote{During the final stages of preparing this manuscript, with the assistance of an AI agent, we obtained a proof sketch for this conjecture. We are currently in the process of carefully verifying, analyzing and refining the arguments for forthcoming work.}
\begin{conj}
\label{conj:kernelspan}%
The kernel of $\ncPsi$ is spanned by the valuative relations and the loop-coloop relations.
Equivalently, the values of $\ncPsi$ on loopless nested matroids are linearly independent, and the image of $\ncPsi$ is a Hopf algebra whose homogeneous pieces have dimension $\nd!$.
\end{conj}
In particular, we conjecture that the linear independence is witnessed by a special collection of $\nd!$ set compositions, which we call \Def{max-min set compositions}. See \cref{sec:computational} for details.
Asymptotically, the bound in \cref{thm:lower_bound} falls short of the conjectured dimension.
If the conjecture holds, we obtain a new Hopf algebra with a basis indexed by permutations, an active area of research (see, e.g., \cite{li_hopf_2026} and references therein).

In what follows, we survey the relevant background on matroids, their matroid base polytopes, valuative functions, and word-quasisymmetric functions (\cref{sec:preliminaries}), prove the valuative nature of the map $\ncPsi $ (\cref{sec:weaklyvaluative}), and analyze the rank of $\ncPsi$ (\cref{sec:rank}).
In \cref{sec:qsym} we survey and combine results from Billera--Jia--Reiner \cite{billera2009quasisymmetric} and Derksen--Fink \cite{derksen2010valuative} to give a full description of the image and kernel for the map $\cPsi\colon\MatLinSpace\to\QSym$ assigning to every matroid the chromatic quasisymmetric function mentioned above.
This story is parallel to the one we develop for the non-commutative case and hence serves as a motivation.
Further appendices include technical results on Hopf morphisms (\cref{sec:hopfmonoid}), the double-chain description of nested matroids (\cref{appendix:DCarenested}), and computational data (\cref{sec:data}).

We further provide a Python package \cite{penaguiao2026chromatic} version 1.0.2, that we used to gather computational evidence supporting our conjecture.
There is also an easy-to-use web interface \cite{penaguiao2026web}.

\section{Preliminaries\label{sec:preliminaries}}

We recall some notation.
If $\nd\geq 0$, we write $[\nd] = \{ 1, \ldots, \nd\}$.
In this way, $[0]$ is the empty set.
Most combinatorial objects that we study can be labeled by elements of an arbitrary finite set $\groundset$, but we will introduce many of them with labels in $[\nd]$ for ease of notation. Schubert matroids are an exception: they require a total order on the ground set, and we use $[\nd]$ with its natural order throughout for that purpose.

We will work always over the reals $\R$, however every result in this paper works for a field with characteristic zero.

\subsection{Basic combinatorial objects and notation}\label{ssec:combbasics}

If $\al = (\al_1, \ldots, \al_k)$ is a tuple of real numbers, we write $|\al| = \sum_i \al_i$.
Let $\nd$ be a non-negative integer.
A \Def{composition} $\alpha $ of $\nd$ is a tuple $(\alpha_1, \ldots, \alpha_k)$ of positive integers such that $|\alpha| = \nd$.
We write $\alpha \models \nd$ and we say that $\ell(\alpha ) = k$ is the \Def{length} of $\alpha$.
There is a unique composition of $0$, the empty tuple.

For a finite set $\groundset$, a \Def{set composition} $\opi$ of $\groundset$ is a tuple $(\pi_1, \ldots, \pi_k)$ of non-empty disjoint subsets of $\groundset$ such that $\bigcup_i \pi_i = \groundset$.
We write $\opi \models \groundset$ and say that ${\ell(\opi) = k}$ is the \Def{length} of $\opi$.
There is a unique set composition of $\emptyset $.
The \Def{type} of a set composition $\opi $ of a set  $\groundset$ is a composition $\alpha \models |\groundset|$.
Explicitly, if $\opi = (\pi_1, \cdots , \pi_k)$, then $\alpha(\opi) = (|\pi_1|, \ldots, |\pi_k|)$.
Given a function $f:\groundset \to \R$, its \Def{set composition type}  $\opi(f) = (\opi(f)_1, \ldots , \opi(f)_k)$ is defined as the unique set composition such that $f$ is constant on each $\opi(f)_i$, and if $i< j, \, a \in \opi(f)_i, \, b\in \opi(f)_j$ then $f(a) < f(b)$.
 Vice versa, for a set composition $\opi\models\groundset$  we define the map $f_{\opi} \colon\groundset\to[k]$ by $f_{\opi}(a)\coloneqq j$ for $a\in \pi_j$.

\subsection{Basic polyhedral geometry}
We assume some familiarity with convex polytopes, see, e.g.,  \cite{Gruenbaum2003,Ziegler1998} for introductions.
For a direction $\vy\in\big(\R^d\big)^*$ we define the \Def{$\vy$-maximal face} $\Pol^\vy$ of a polytope $\Pol$ by

\begin{equation*}
  \Pol^\vy\coloneqq\{\vx\in \Pol\ \colon\ \vy(\vx)\geq \vy(\vx')\quad \text{for all } \vx'\in \Pol\}.
 \end{equation*}
 For a face $\face$ of a polytope $\Pol$ define the \Def{open} and \Def{closed normal cone} 
 $\normalcone_\Pol^\circ(\face)$ and $\normalcone_\Pol(\face)$ to be the set of all directions that (strictly) maximize $\face$ in $\Pol$, that is,
 \begin{equation*}
\begin{split}
 \normalcone_\Pol^\circ(\face) &\coloneqq \big\{\vy\in\big(\R^d\big)^* \ \colon\ \Pol^\vy=\face\big\}\\
 \normalcone_\Pol(\face) &\coloneqq \big\{\vy\in\big(\R^d\big)^* \ \colon\ \Pol^\vy\supseteq \face\big\}.
\end{split}
\end{equation*}
Collecting the normal cones $\normalcone_\Pol(\face)$ of all faces $\face$ of a polytope $\Pol$ defines the \Def{normal fan}
\begin{equation*}
 \Nfan(\Pol)\coloneqq \big\{\normalcone_\Pol(\face)\ \colon\ \face \text{ a face of }\Pol\big\}\,.
\end{equation*}
It can be checked that this is indeed a polyhedral fan.
Polytopes with the same normal fan are called \Def{normally equivalent}.
In particular, translating and scaling polytopes preserves the normal fan.

The 
\Def{braid arrangement} $\BraidA_d$ is the hyperplane arrangement consisting of the finite set of hyperplanes $\Hyp_{ij}\coloneqq\{\vx\in\R^d\ \colon\ \vx_i=\vx_j\}$ for $i,j\in [d]$, $i\neq j$.
The connected components of $\R^d\setminus \bigcup \BraidA_d$ are the \Def{(open) regions} of the  arrangement. 
The \Def{closed regions} of the braid arrangement are the topological closures of the open regions.
They are polyhedral cones and their faces are the \Def{faces} of the braid arrangement, also called \Def{braid cones}.
The braid cones can be described uniquely by set compositions $ \opi=(\pi_1,\dots,\pi_k)\models[\nd]$ (\Cref{lem:braid}).
We therefore denote them by $\BraidC_{\pi_1,\dots,\pi_k}$.
For more details about concepts on hyperplane arrangements see, for example, \cite{stanley2007introduction}.
The faces of the braid arrangement $\BraidA_d$ form the \Def{braid fan}.
\begin{lm}\label{lem:braid}
 The faces of the braid arrangement $\BraidA_{\nd}$, also called \Def{braid cones}, can be described uniquely by set compositions ${[\nd]}=T_1\uplus\dots\uplus T_k$:
  \begin{equation*}
   \begin{split}
    \BraidC_{\pi_1,\dots,\pi_k}\coloneqq& \big\{\vx\in\R^{\nd}\, \colon\, \vx_i=\vx_j\text{ for all }i,j\in \pi_a,\\
	& \hspace{12ex} \vx_i\leq \vx_j\text{ for all }i\in \pi_a, j\in \pi_b \text{ with } a<b\big\}\\
    =&\cone\{\onebb_{\pi_1}, \onebb_{\pi_1\cup \pi_2},\dots,
			\onebb_{\pi_1\cup\dots\cup \pi_{k-1}} \} + \rspan\{\onebb_{{[\nd]}}\}
   \end{split}
\end{equation*}
with
\begin{equation*}
 \dim \BraidC_{\pi_1,\dots,\pi_k}=k\,,
\end{equation*}
where $\onebb_T$ for some subset $T\subseteq {[\nd]}$ is the $0/1$-vector with entries equal to one for indices in the subset $T$ and zero otherwise.
\end{lm}

\begin{rem}\label{rem:mapsaspoints}
 Given a function $f\colon[\nd]\to\R$ we can identify this function with the point $(f(1), \dots,f(d))\in \R^d$. Similarly, any point $\vx=(\vx_1,\dots,\vx_d)$ defines a function $x\colon[d]\to \R$ by $x(i)=\vx_i$.
Therefore we will identify points in $\R^d$ and functions  $f\colon[\nd]\to\R$ throughout this work.
Then a function $f\colon[\nd]\to\R$ is contained in the braid cone $\BraidC_{\opi(f)}$ given by its set composition type $\opi(f)$ as defined in \Cref{ssec:combbasics} and the open braid cone $\BraidC_{\opi}$ contains all the points with set composition type $\opi$.
\end{rem}

We define the \Def{standard permutahedron}  $\stanperm_d$
as the convex hull of the $d!$ permutations of the point $(1,2,\dots,d)$, that is, we define the standard permutahedron $\stanperm_d$ by
 \begin{equation*}
  \stanperm_d\coloneqq\conv\big\{(\vx_i)_{i\in [d]}\in \R^d \colon \{\vx_i\}_{i\in [d]}=[d]\big\}
		\subseteq \R^d.
 \end{equation*}
Note that the standard permutahedron is of dimension $d-1$ since all vertices are contained in a hyperplane with constant coordinate sum.
 In our definition, standard permutahedra are integer polytopes.
 The facet description of the standard permutahedron is given by
\begin{align}\label{eq:permHdesc}
    \sum_{i=1}^d \vx_i&=d+\left(d-1\right)+\dots+1=\frac{d(d+1)}{2}=\tbinom{d+1}{2}\\
    \sum_{i\in T}\vx_i&\leq d+\left(d-1\right)+\dots+\left(d-\lvert{T}\rvert+1\right)=\tbinom{d+1}{2}-\tbinom{\lvert{T}\rvert+1}{2}  \quad \text{for all }T\subseteq [d].
\end{align}
Every face of the standard permutahedron can be described combinatorially by set compositions and the normal fan of the standard permutahedron is the braid arrangement defined above.
See \cref{abb:permcon} for an example.
\begin{figure}
\centering
\begin{subfigure}{.47\textwidth}
  \includegraphics[trim=180 50 190 115, clip, width=\linewidth]{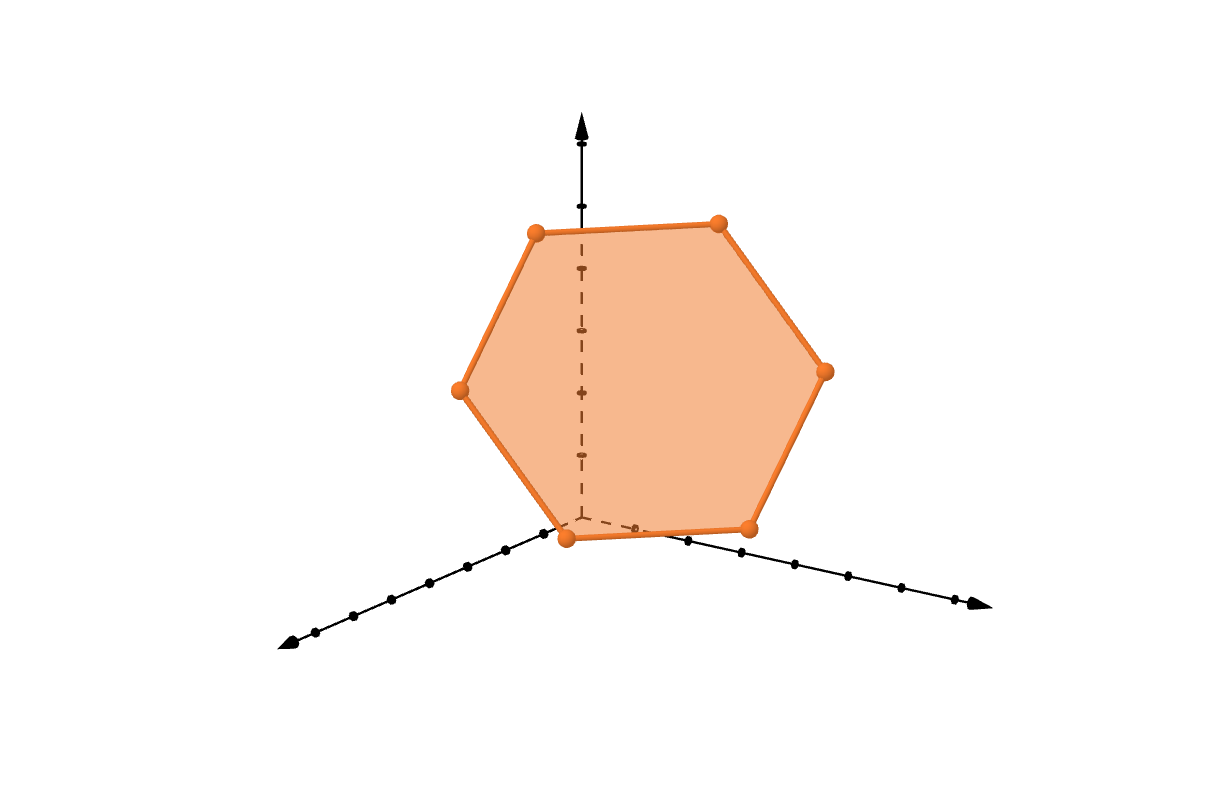}
  \caption{Standard permutahedron $\stanperm_{3}$}
\end{subfigure}\hfill
\begin{subfigure}{.47\textwidth}
  \includegraphics[trim=180 50 190 115, clip, width=\linewidth]{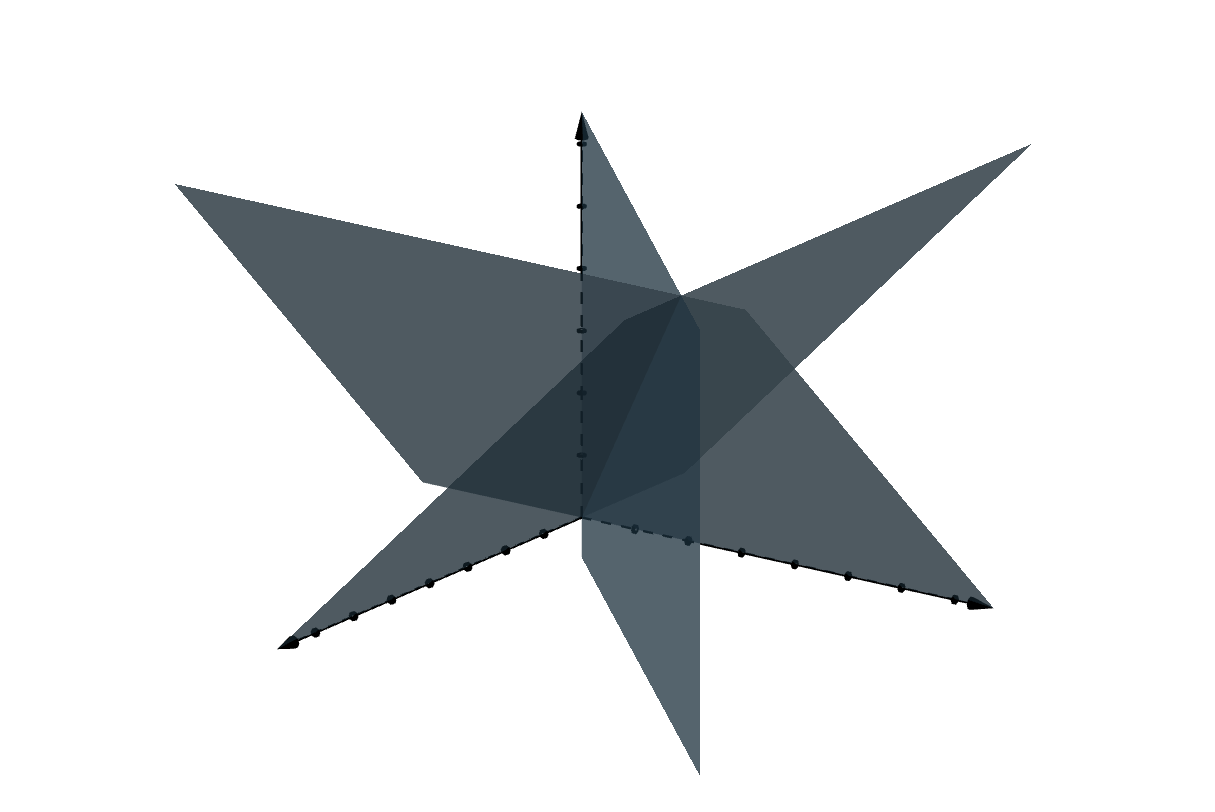}
  \caption{Braid arrangement $\BraidA_{3}$}\label{abb:braid}
\end{subfigure}
 \caption{The standard permutahedron $\stanperm_{3}\subseteq \R^3$ (left) and the normal fan in $(\R^3)^*$ (right), where the intersection line is the normal cone $\normalcone_{\stanperm_{3}}(\stanperm_{3})$, the half hyperplanes are the normal cones of the edges and the full-dimensional cones are the normal cones of the vertices of $\stanperm_{3}$.
}%
\label{abb:permcon}
\end{figure}

We say a  fan $\Nfan$ is a \Def{coarsening} of another  fan $\Nfan'$ if every cone in $\Nfan$ is the union of some cones in $\Nfan'$.
A polytope $\Pol\subseteq \R^d$ is a \Def{generalized permutahedron} or \Def{deformed permutahedra}
 if its normal fan $\Nfan(\Pol)$ is a coarsening of the normal fan $\Nfan(\stanperm_d)$ of the standard permutahedron $\stanperm_d$, that is, it is a coarsening of the fan induced by the braid arrangement $\BraidA_d$.
This is equivalent to saying a polytope $\Pol\subseteq \R^d$ is a generalized permutahedron if and only if every edge is parallel to $\be_i-\be_j$ for some $i\neq j\in[d]$.

\subsection{Matroids and matroid base polytopes}\label{ssec:prelimMatroids}
We introduce some matroid theory that can be found in \cite{Oxley2006}.
A \Def{matroid} is a pair $\matroid=(\groundset,\bases)$ where $\groundset$ is  a finite set and $\bases$ is a collection of subsets of $\groundset$, called \Def{bases}, such that
\begin{enumerate}
 \item[(B1)] $\bases\neq \emptyset$ 
 \item[(B2)] for distinct members $A\in\bases$ and $B\in\bases$ and every element $a\in A\setminus B$ there exists an element $b\in B\setminus A$ such that $A\setminus\{a\}\cup\{b\} \in\bases$ (basis exchange axiom).
\end{enumerate}
A subset $I\subseteq\groundset$ that is contained in some basis is called \Def{independent}, otherwise it is \Def{dependent}.
Given a matroid $\matroid = (\groundset, \bases)$, one can define the \Def{rank function} $\rk_\matroid: 2^\groundset \to \Z$ of the matroid as 
\begin{equation*}
\rk_\matroid(A) \coloneqq \max_{B \in \bases} \{ |A \cap B|\}
\end{equation*}
where we run over all bases $B$ of $\matroid$.
Note how we always have $\rk_\matroid(\emptyset) = 0$, we also have the submodularity relation
\begin{equation*}
\rk_\matroid(A) + \rk_\matroid(B) \geq \rk_\matroid(A\cap B) + \rk_\matroid(A \cup B)\, .
\end{equation*}
We call $\rk_\matroid(\groundset)$ the \Def{rank of the matroid} $\matroid$.
\begin{smpl}
Take the matroid $\matroid = ([4], \{ \{1, 2\}, \{1, 3\}, \{1, 4\}, \{2, 3\}, \{2, 4\}\})$.
Then we have, for instance, $\rk_\matroid(\{1\}) = 1$, $\rk_\matroid(\{1, 2, 3\}) = 2$, and $\rk_\matroid(\{3, 4\}) = 1$.
\end{smpl}
A subset  $F\subseteq \groundset $ is called a \Def{flat} if $\rk(F)<\rk(F\cup\{a\})$ for every element $a\in\groundset\setminus F$.
A subset $C\subseteq\groundset$ is called a \Def{circuit} if it is a minimal dependent set, i.e., it is dependent but removing any element gives an independent set.
A \Def{loop} (resp. \Def{coloop}) is an element $e\in\groundset$ that is contained in no (resp. every) basis.

For a map $f\colon\groundset\to\R$, we define the \Def{weight of a basis} $B\in\bases$ to be
\begin{equation*}
 f(B)\coloneqq\sum_{i\in B}f(i)\,.
\end{equation*}
If for a matroid $\matroid=(\groundset,\bases)$ there is a unique basis $B$ with maximum weight $f(B)$ then we say the map $f$ is an \Def{$\matroid$-generic map}.
Note that a map $f$ is $\matroid$-generic if and only if there is a basis $B$ such that for each basis $(B \setminus \{b\}) \cup \{c\}$ with $b \in B$ and $c \notin B$, we have $f(c)  <  f(b)$. Therefore $\matroid$-genericity for a map $f$ only depends on its set composition type  $\opi(f)$.
Recall that for a set composition $\opi = \pi_1 | \cdots | \pi_l \models U$ we define $f_{\opi} \colon U \to \R $ as
\begin{equation}
\label{eq:setcompositiongeneric}
    f_{\opi}(i) \coloneqq j \text{ whenever } i \in \pi_j\,.
\end{equation}
A set composition $\opi $ is \Def{$\matroid$-generic} if the function $f_{\opi}$ is $\matroid$-generic.
Note that by the greedy algorithm every set composition with only singleton blocks is generic for every matroid.
Similarly, the set composition with just one block is not $\matroid$-generic, except when the matroid has exactly one basis.

To every matroid $\matroid=(\groundset,\bases)$ there exists an associated \Def{matroid base polytope} $\Pol_\matroid\subset\R\groundset$
defined as the convex hull of the indicator vectors of the bases, i.e.,
\begin{equation*}
 \Pol_\matroid\coloneqq\conv\{\onebb_B \in\R\groundset\colon B\in \bases\}\,,\quad \text{ where } (\onebb_B)_i\coloneqq \begin{cases}
                                                                                    1, \text{ if }i\in B\\
                                                                                    0, \text{ else.}
                                                                                   \end{cases}
\end{equation*}
Note, since every basis $B\in\bases$ of a matroid $\matroid=(\groundset,\bases)$ has the same cardinality, 
the matroid base polytope $\Pol_\matroid$ is contained in a hyperplane defined by coordinate sum equals $\rank(\matroid)$ and has codimension at least $1$.
\begin{smpl}
    The matroid base polytope of the \Def{uniform matroid}
    \begin{align*}
     \uniformatroid{r}{\groundset}\coloneqq (\groundset,\{ I\subseteq \groundset\colon \lvert I\rvert = r\} )
    \end{align*}
    is the \Def{hypersimplex}
    \begin{align*}
        \hypersimplex{r}{\groundset}\coloneqq& \conv\{\onebb_I\in\R\groundset\colon I\subseteq\groundset, \lvert I\rvert = r\}\\
                =&[0,1]^\groundset\cap \left\{\vx\in\R\groundset\colon\sum_{i\in\groundset}\vx_i=r\right\}\,.
    \end{align*}

\end{smpl}

\begin{thm}[{\cite[Theorem 4.1]{gelfand_combinatorial_1987}}]
    A polytope $\Pol$ is a matroid base polytope (of a unique matroid) if and only if the following conditions hold:
    \begin{enumerate}
        \item the vertices of $\Pol$ are a subset of the vertices of a hypersimplex $\hypersimplex{r}{d}$,
        \item every edge direction is of the form $\ve_i-\ve_j$.
    \end{enumerate}
\end{thm}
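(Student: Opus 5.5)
Both directions reduce to combining the edge characterization of matroid base polytopes with the basis exchange axiom, using the correspondence between vertices of $\hypersimplex{r}{d}$ and $r$-subsets of $[\nd]$.

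\emph{Necessity.} Let $\Pol=\Pol_\matroid$ with $\matroid$ of rank $r$.
\begin{enumerate}
\item Every $\onebb_B$ is a $0/1$ vector with coordinate sum $r$, and no $\onebb_B$ is a convex combination of the others. Hence the vertices of $\Pol$ are exactly the $\onebb_B$, which are vertices of $\hypersimplex{r}{d}$; this gives condition (1).
\item For condition (2), take an edge $[\onebb_A,\onebb_B]$ and suppose $|A\setminus B|\ge 2$.
\item Write $\Delta = A\triangle B$. I will show that the midpoint $\tfrac12(\onebb_A+\onebb_B)$ is also a midpoint of two other points of $\Pol$. This contradicts the edge being a face with only the two vertices $\onebb_A,\onebb_B$.
\item Pick $a\in A\setminus B$. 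Symmetric (strong) basis exchange gives $b\in B\setminus A$ with both $A-a+b$ and $B-b+a$ bases.
\item Then $\onebb_{A-a+b}+\onebb_{B-b+a}=\onebb_A+\onebb_B$, and since $|A\setminus B|\ge2$ these two bases differ from $A$ and $B$.
\end{enumerate}
The main obstacle is that (B2) as stated is only the one-sided exchange. I would either cite the symmetric exchange property (Brualdi) from \cite{Oxley2006} or avoid it. To avoid it, choose $\vy$ with $\Pol^\vy$ equal to the edge, and assume $\vy$ is maximized exactly on $\{A,B\}$. One-sided exchange gives a basis $A-a+b$ with $\vy(A-a+b)\le \vy(A)$, hence $\vy_b\le \vy_a$. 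Apply exchange from $B$ toward $A$ and compare the resulting inequalities. Summing them over a perfect matching of exchanges (which exists by a Hall-type argument) forces equalities, so a third basis lies on the face.

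\emph{Sufficiency.} Let $\Pol$ satisfy (1) and (2), and set $\bases=\{B : \onebb_B\text{ is a vertex of }\Pol\}$. This set is nonempty and all its members have size $r$, and $\Pol=\conv\{\onebb_B : B\in\bases\}$. Uniqueness of the matroid is then immediate, since the vertices determine $\bases$. It remains to check (B2).
\begin{enumerate}
\item Take $A\ne B\in\bases$ and $a\in A\setminus B$, and use the direction $\vy=\onebb_B-\onebb_A$ restricted suitably.
\item Consider $\onebb_A$ together with its edge directions, each of the form $\ve_j-\ve_i$ by (2).
\item $\onebb_B-\onebb_A$ lies in the cone generated by these edge directions, namely the tangent cone of $\Pol$ at $\onebb_A$.
\item Every vector in that cone is a nonnegative combination of vectors $\ve_j-\ve_i$ with $i\in A$ and $j\notin A$, because the neighbours are $0/1$ vectors of the same sum.
\item The coordinate $a$ of $\onebb_B-\onebb_A$ is $-1$, so some generator $\ve_b-\ve_a$ must appear. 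Its endpoint is the neighbour $\onebb_{A-a+b}$.
\item We need $b\in B\setminus A$. Choose the direction $\vy=\onebb_B-\tfrac12\onebb_{\{a\}}$-type perturbation so that the vertex is reached by improving moves. Moving from $A$ along edges increasing $\langle \onebb_B,\cdot\rangle$ and decreasing $x_a$, some edge at $A$ removes $a$ and adds an element with positive $\onebb_B$-weight, i.e.\ $b\in B\setminus A$.
\end{enumerate}
This is the standard argument (Gelfand--Goresky--MacPherson--Serganova), and I expect this direction to need only careful bookkeeping.
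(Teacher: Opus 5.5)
The paper does not prove this statement; it only quotes it from \cite{gelfand_combinatorial_1987}. Your argument therefore has to stand on its own.

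Necessity is fine once you cite Brualdi's symmetric exchange. The edge is a face whose only $0/1$ points are its endpoints, so the midpoint identity gives the contradiction. Your fallback via a ``perfect matching of exchanges'' is Brualdi's bijective exchange lemma, which is no easier than what it replaces. If you want to use only (B2), argue as follows:
\begin{enumerate}
\item Take $\vy$ maximized exactly at $\onebb_A,\onebb_B$, and pick $a\in A\setminus B$ with $\vy_a$ minimal.
\item (B2) at $A$ gives $b\in B\setminus A$ with $\vy_b<\vy_a$.
\item (B2) at $B$ removing $b$ gives $a'\in A\setminus B$ with $\vy_{a'}<\vy_b$, contradicting minimality.
\end{enumerate}
Both inequalities are strict because $|A\setminus B|\ge 2$ keeps the exchanged bases off the edge.

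The genuine gap is in sufficiency, at the one step that carries the content: showing the exchange partner lies in $B$. Step 6 asserts that some edge at $\onebb_A$ removes $a$ and adds an element of $B$, but your perturbation does not force this. Take $\vy=\onebb_B-\tfrac12\ve_a$ and an edge $\ve_j-\ve_i$ at $\onebb_A$ with $i\in A$, $j\notin A$. This edge is also improving when $i=a$ and $j\notin A\cup B$ (value $\tfrac12$). It is also improving when $i\in(A\setminus B)\setminus\{a\}$ and $j\in B\setminus A$ (value $1$). So an improving edge need neither remove $a$ nor add an element of $B$; the key sentence is asserted, not proved.

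The repair is already inside your steps 3--5.
\begin{enumerate}
\item Write $\onebb_B-\onebb_A=\sum_k\lambda_k(\ve_{j_k}-\ve_{i_k})$ with $\lambda_k\ge 0$, $i_k\in A$, $j_k\notin A$, and $A-i_k+j_k\in\bases$.
\item Look at a coordinate $j\notin A\cup B$. The left side is $0$ there. Each generator contributes $[j_k=j]\ge 0$ there, because no $i_k$ equals $j$. Hence $\lambda_k=0$ whenever $j_k\notin B$.
\item Combined with your coordinate-$a$ computation, this gives some $k$ with $\lambda_k>0$, $i_k=a$ and $j_k\in B\setminus A$. Thus $A-a+j_k\in\bases$, which is (B2).
\end{enumerate}
Alternatively, $\vy=\onebb_{A\cup B}-\tfrac12\ve_a$ works. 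Its only improving edges at $\onebb_A$ are $\ve_b-\ve_a$ with $b\in B\setminus A$, and $\langle\vy,\onebb_B-\onebb_A\rangle=\tfrac12>0$ forces one to exist. With either fix, you can drop the vague ``restricted suitably'' in step 1 and the perturbation in step 6.
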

It follows that matroid base polytopes are  generalized permutahedra, i.e., their normal fans coarsen the braid fan. See, e.g., \cite{postnikov2009permutohedra}.
Moreover, matroids and their matroid base polytopes are in bijection and we will identify them.
We will freely switch back and forth between matroids and matroid base polytopes, e.g., for a function defined on matroids we will implicitly assumed it is also defined for matroid base polytopes and vice versa.

\begin{obs}\label{obs:rkpol}
 Let $\Pol_\matroid$ be the matroid base polytope of the matroid $\matroid$. Then the rank function $\rk_\matroid$ can be computed as
\begin{equation*}
 \rk_\matroid(A) = \max_{\vx\in\Pol_\matroid} \sum_{i\in A}\vx_i\,.
\end{equation*}
\end{obs}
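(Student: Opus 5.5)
We must prove \Cref{obs:rkpol}: for every $A\subseteq\groundset$, $\rk_\matroid(A)=\max_{\vx\in\Pol_\matroid}\sum_{i\in A}\vx_i$. The idea is to compare a maximum of a linear functional over a polytope with its maximum over the polytope's vertices.

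First, the functional $\vx\mapsto\sum_{i\in A}\vx_i=\langle\onebb_A,\vx\rangle$ is linear. $\Pol_\matroid$ is a nonempty polytope, nonempty by axiom (B1). A linear functional on a polytope attains its maximum at some vertex. Moreover every point of $\Pol_\matroid=\conv\{\onebb_B\colon B\in\bases\}$ is a convex combination of the generators $\onebb_B$. So for $\vx=\sum_B\lambda_B\onebb_B$ with $\lambda_B\ge0$ and $\sum_B\lambda_B=1$, linearity gives
\begin{equation*}
\langle\onebb_A,\vx\rangle=\sum_{B}\lambda_B\langle\onebb_A,\onebb_B\rangle\le\max_{B\in\bases}\langle\onebb_A,\onebb_B\rangle .
\end{equation*}
Conversely, each $\onebb_B$ lies in $\Pol_\matroid$. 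Hence the maximum over $\Pol_\matroid$ equals $\max_{B\in\bases}\langle\onebb_A,\onebb_B\rangle$. This avoids needing to identify the vertices of $\Pol_\matroid$ exactly; it only uses that $\Pol_\matroid$ is the convex hull of the $\onebb_B$.

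Second, for each basis $B$ we have $\langle\onebb_A,\onebb_B\rangle=\sum_{i\in A}(\onebb_B)_i=|A\cap B|$. Therefore
\begin{equation*}
\max_{\vx\in\Pol_\matroid}\sum_{i\in A}\vx_i=\max_{B\in\bases}|A\cap B|,
\end{equation*}
which is exactly the definition of $\rk_\matroid(A)$ given above.

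No step is a real obstacle. The only point needing care is the first one, the reduction from the whole polytope to its generating points. It is handled entirely by the convex-combination inequality above, so no deeper matroid theory is required.
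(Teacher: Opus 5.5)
Your proof is correct and is the same argument the paper relies on: the paper states this as an observation without a written proof, and it follows from $\Pol_\matroid=\conv\{\onebb_B\colon B\in\bases\}$, the convex-combination bound that reduces the maximum to the generators $\onebb_B$, and $\langle\onebb_A,\onebb_B\rangle=|A\cap B|$ together with the definition $\rk_\matroid(A)=\max_{B\in\bases}|A\cap B|$. Your side remark that the maximum is attained at a vertex is not needed, since your convex-combination inequality already establishes the equality.
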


 \begin{rem}\label{rem:mapsaspoints2}
  As in \cref{rem:mapsaspoints}  we can identify functions  $f\colon \groundset\to\R$ with points $\vy_f$ in the dual vector space $\R^U=(\R  U)^*$.
Then the weight $f(B)$ of a basis is the value of the linear functional $\vy_f$ on the vertex $\onebb_B$ corresponding to the basis $B$.
The value $f(B)$ is a unique maximum among all bases in  matroid $\matroid$, i.e., $f$ is $\matroid$-generic, if and only if the linear functional $\vy_f$ is contained in the open normal cone $\normalcone_{\Pol_\matroid}(\onebb_B)$ of the matroid base polytope $\Pol_\matroid$ at the vertex $\onebb_B$ corresponding to the maximizing basis $B$.

Similarly, a set composition $\opi$ is $\matroid$-generic if and only if the corresponding open braid cone $\BraidC_{\opi}^\circ$ is contained in the interior of a full-dimensional cone in the normal fan of $\Pol_{\matroid}$, i.e., the normal cone of a vertex $\vertex$.

As observed above, every set compositions with only singleton blocks is generic for every matroid, since the normal fan of the matroid base polytope coarsens the braid fan.
Similarly, the set composition with just one block is not $\matroid$-generic, except when the matroid base polytope is a point.

\end{rem}

Given two matroids $\matroid, \natroid$ in respective disjoint ground sets $\groundset, \othergroundset$, we define the \Def{direct sum} of $\matroid, \natroid$, denoted $\matroid\oplus \natroid$, as the matroid in $\groundset\cup\othergroundset$ such that:
\begin{equation*}
\bases(\matroid\oplus \natroid) \coloneqq \{B_1 \cup B_2\ |\ B_1 \in \bases(\matroid), \ B_2 \in \bases(\natroid) \} \, .
\end{equation*}
A matroid is called \Def{connected}, if it cannot be written as the direct sum of two non-empty matroids.
For a subset $F\subseteq\groundset$ the \Def{restriction} $\matroid|_F$ is defined as
\begin{equation*}
 \matroid|_F \coloneqq (F, \{B\cap F \ |\ B\in\bases(\matroid)\}).
\end{equation*}
A subset $F\subseteq\groundset$ is a \Def{connected flat}, if it is a flat and $\matroid|_F$ is connected.

We can define a vector space $\Mat_d$ by taking formal linear combinations of matroids on the ground set $[d]$:
\begin{equation}\label{eq:defmatspace}
 \Mat_d\coloneqq \spn \{ \matroid\ | \text{ matroid with ground set $[d]$ }\} \, .
\end{equation}
With the direct sum of matroids as a product we can define the graded vector space
\begin{equation*}
\MatLinSpace \coloneqq \oplus_{d \geq 0 }\Mat_d\,.
\end{equation*}
This is a Hopf algebra when endowed with the product (via direct sum of matroids) and coproduct (via deletion and contraction).
The interested reader can see more on this in \cite[Chapter 13]{aguiar2023hopf} and \cite[Section 12]{schmitt1994incidence}.

In this paper, to simplify notation we mostly work with the ground set $\groundset=[\nd]$, except in the Hopf monoid context in \cref{sec:hopfmonoid}.

\subsection{Special matroids: Schubert matroids and nested matroids}

We are interested in a special subset of matroids, that has many nice combinatorial descriptions --- and many names used throughout the literature. We will use the term \emph{nested matroid} for any matroid in this subset and the term \emph{Schubert matroid} for a specific representative in every isomorphism class, i.e., no two Schubert matroids are isomorphic.

The name \emph{Schubert matroid} was coined in \cite{sohoni1999rapid}, and these matroids were originally studied in \cite{oxley_matroids_1982} under the name of \emph{transversal matroids}.
Schubert matroids arise in many different, seemingly unrelated ways.
For instance, these can be defined via lattice path matroids with top path $N^rE^{d-r}$.

In many places the term Schubert matroids is used for all the matroids isomorphic to these, as in \cite{eur2023stellahedral,ferroni_valuative_2024,ferroni_polytope_2025}.
As mentioned above, we will not follow this convention.

The class of Schubert matroids \say{up to isomorphism} has also gathered a collection of nomenclature.
For example in \cite{mayhew2006equitable} these were called \emph{nested matroids} (the name we will be using throughout this paper) owing to the fact that the underlying structure of cyclic flats forms a nested collections of sets, whereas in \cite{klivans2003combinatorial} these were called \emph{shifted matroids}, owing to their relation with shifted simplicial complexes.
In \cite{berget2009tableaux}, these matroids were called \emph{freedom matroids} and in \cite{bonin2006lattice} they were called \emph{generalized Catalan matroids}.
In \cite{billera2009quasisymmetric} these were called \emph{PI-matroids}.
Nested matroids were introduced in \cite{crapo1965single} to show that there are at least $2^\nd$ isomorphism classes of matroids of size $\nd$.
Later, these were studied by \cite{bonin2006lattice} and \cite{hampe_intersection_2017}.

See \Cref{ssec:nestedmatroids} for more details.

\subsubsection{Schubert matroids}\label{ssec:schubert}

In order to define a Schubert matroid we need a total order on the ground set $\groundset$.
As mentioned above, we will use $\groundset=[\nd]$, which comes with the order of the natural numbers.

We start by defining the \Def{Gale order} $\leqG$ on subsets of the same size.
For subsets $A=\{a_1<\dots<a_r\}\subseteq [\nd]$ and $B=\{b_1<\dots<b_r\}\subseteq [\nd]$ define
\begin{equation}\label{eq:defGaleorder}
 B\leqG A \quad \mathrel{\vcentcolon\Leftrightarrow}  \quad b_i\leq a_i, \text{ for all } i=1,\dots,r
\end{equation}
For a subset $A\subseteq [\nd]$ we define the \Def{Schubert matroid} $\SM(A)$
by
 \begin{align}\label{eq:defSchubertmatroid}
  \SM(A)&\coloneqq ([\nd],\{B\subseteq [\nd] \colon \lvert B\rvert=r,\, B\leqG A\} )
 \end{align}
Note that the rank of the Schubert matroid $\SM(A)$ is $r=\lvert A\rvert$.

We present an alternative way of constructing Schubert matroids using lattice paths.
This will be useful for demonstrations later, as well as introducing a good visualization for the matroids.
On a $(\nd-r)\times r$ grid, a \Def{north-east path} $\Path$ (\Def{NE-path} for short) is a sequence of steps from $(0,0)$ to $(\nd-r,r)$ where each step increases exactly one of the two coordinates by one.
In this way, we will have $r$ steps to the north (increasing the second coordinate) and $\nd-r$ steps to the east (increasing the first coordinate).
Equivalently, it is a connected collection of edges with no bends of the form $\llcorner $ or $\urcorner$.
Let $\NEP(\nd, r) $ be the set of NE-paths in the $ (\nd-r)\times r$ grid starting on the bottom left and ending at the top right corner.
We can also write a path $\Path\in\NEP(\nd,r)$ as a sequence $(c_i)_{i=1}^d$ of characters in $\{N, E\}$ according to which direction (north $N$ or east $E$) the path takes from the bottom left to the top right.
Then a path $\Path\in\NEP(\nd,r)$ contains precisely  $r$ north steps $N$.
There is a canonical bijection between path in $\NEP(\nd,r)$ and $r$-element subsets of $[\nd]$ by defining
\begin{align*}
 B_\Path&\coloneqq \{i\in [\nd] | c_i = N\} \subseteq [\nd]\,,\\
 \Path_B&\coloneqq(c_i)_{i=1}^\nd\in\NEP(\nd,r)\,, \text{ where } c_i=\begin{cases}
                                           N\,,\text{ if }i\in B\,,\\
                                           E\,,\text{ if }i\notin B\,.
                                          \end{cases}
\end{align*}
Therefore the set of NE-path $\Path\in\NEP(\nd,r)$ on the  $ (\nd-r)\times r$ grid has cardinality $\binom{\nd}{r}$.

In $\NEP(\nd, r)$ we can define the order $\leqP$, and say that $\Path \leqP\Qath $ if every segment in $\Qath$ is in $\Path$ or above some segment in $\Path$.
Note that $B\leqG A$ if and only if $\Path_A \leqP \Path_B$, i.e., the corresponding posets are anti-isomorphic.
We can now equivalently define a Schubert matroid $\SM(\Path)$ for every $\Path\in\NEP(\nd,r)$ by
\begin{equation*} \SM(\Path) \coloneqq ([\nd]\,,\{ B_\Qath\ \colon \ \Path \leqP \Qath\} )\, . \end{equation*}
Of course, $\SM(A)=\SM(\Path_A)$.

\begin{smpl}\label{smpl:sm}
     The two paths given in \cref{fig:samplepathmatroidschubert4} define matroids on $[4]$ of rank $2$:
    \begin{align}
    \bases(\SM(\{1,3\})) &=\{\{1,2\},\{1,3\}\}\label{eq:SM13}\\
     \bases(\SM(\{2,4\})) &=\{\{1,2\},\{1,3\},\{1,4\},\{2,3\},\{2,4\}\} = \binom{[4]}{2}\setminus\{3,4\}\label{eq:SM24}
    \end{align}

\begin{figure}
    \centering

\begin{tikzpicture}[scale=1]
 \draw[step=1cm, thick, gray] (0,0) grid (2,2);
 \draw[orange, ultra thick, opacity=0.8] (0,0) -- (1,0) -- (1,1) -- (2,1) -- (2,2);
 \draw[blauD, ultra thick, opacity=0.8] (0,0) -- (0,1) -- (1,1) -- (1,2) -- (2,2);
\end{tikzpicture}%
\tdplotsetmaincoords{70}{120}
\begin{tikzpicture}[tdplot_main_coords,line cap=butt,line join=round,c/.style={circle,fill,inner sep=1pt},
        declare function={a=3.6;h=2.6;hm=-2.6;}, baseline ={(0,-2)}]
        \path
        (0,0,0) coordinate (A)
        (a,0,0) coordinate (B)
        (a,a,0) coordinate (C)
        (0,a,0) coordinate (D)
    (a/2,a/2,h)  coordinate (S)
    (a/2,a/2,hm)  coordinate (T);
\fill[orange, opacity=0.15] (A) -- (B) -- (C) --(D);
\fill[orange, opacity=0.2] (A) -- (B) -- (T);
\fill[orange, opacity=0.2] (B) -- (C) -- (T);
\fill[orange, opacity=0.2] (C) -- (D) -- (T);
\fill[orange, opacity=0.2] (D) -- (A) -- (T);
\draw[blauD, opacity=0.7,line width=3pt] (B) -- (T);
\draw[gray, thick] (S) -- (D) -- (C) -- (B) -- cycle (S) -- (C) (T) -- (C) (T) -- (D) (T) -- (B);
\draw[dashed, gray, thick] (S) -- (A) --(D) (A) -- (B) (T) --(A);

\node[anchor=north] at (A) {$(1,0,0,1)$};
\node[anchor= east] at (B) {$(1,0,1,0)$};
\node[anchor=south east] at (C) {$(0,1,1,0)$};
\node[anchor=west] at (D) {$(0,1,0,1)$};
\node[anchor=south ] at (S) {$(0,0,1,1)$};
\node[anchor=north] at (T) {$(1,1,0,0)$};
    \end{tikzpicture}

    \caption{Two paths (orange and blue) defining the two matroids in \cref{smpl:sm} and the corresponding matroid base polytopes as polytopes contained in the $\hypersimplex{2}{4}$. The one in blue corresponds to a matroid with two bases as given in Equation~\eqref{eq:SM13} as there are only two paths between the blue path and the top path. The path in orange corresponds to the matroid from Equation~\eqref{eq:SM24}.}
    \label{fig:samplepathmatroidschubert4}
\end{figure}
\end{smpl}

Hence, Schubert matroids are examples of \Def{lattice path matroids}, matroids whose bases are defined by NE-paths in between two fixed paths.
For Schubert matroids the upper path is $N^rE^{d-r}$.
For  more information about path matroids see, e.g., \cite{bonin2006lattice}.

\begin{smpl}
If we have a set $A = \{ 1, \ldots , r \}$, that is a set of consecutive integers that contains $1$, then there is only one basis in $\SM(A)$, corresponding to the path matroid with respect to the lower path $N^rE^{d-r}$.
We like to call them \Def{point Schubert matroids},
as their matroid base polytope is a point.
They will play a particular role in describing the kernel of our chromatic invariants, see \cref{obs:points} below.
\end{smpl}

\begin{smpl}
On the other end of the spectrum, if $A = \{ \nd - r + 1, \ldots , \nd \}$, corresponding to the path matroid with lower path  $E^{d-r}N^r$, that is a set of consecutive integers that contains $\nd$, we get the \Def{uniform matroid} of rank $r$ on ground set $[\nd]$, that is, every subset of size $r$ is a basis in this matroid.
\end{smpl}

\subsubsection{Nested matroids}\label{ssec:nestedmatroids}

There are a couple of equivalent ways of defining nested matroids.
Specifically, we can define a nested matroid by analyzing its lattice of flats, as done in \cite{oxley_matroids_1982}; by construction from double chains, the definition used in \cite{derksen2010valuative}; by inductive so-called PI-extensions, as defined in \cite{bonin2003lattice}; and by matroids isomorphic to a Schubert matroid.
In \cite{mieke2024}, other equivalent descriptions of nested matroids are documented.

We are interested in relating the concept of double chains, introduced by \cite{derksen2010valuative}, to some of the other  definitions of nested matroids introduced in the literature.
Below we present these definitions and summarize the results from the literature that establish that these are indeed equivalent.
We leave for \cref{appendix:DCarenested} the details of how a matroidal double chain relates to the other definitions of a nested matroid as well as that the connection between nested matroids and double chains is indeed one-to-one, as assumed in \cite{derksen2010valuative}.

\begin{figure}
    \centering
    \begin{tikzpicture}
        \draw (-2,0) node[draw, rounded corners, anchor=north west] {\small{double chain matroids}};
        \draw (-2,1.5) node[draw, rounded corners, anchor=south west] {\small{generalized Catalan matroid}};
        \draw (11,0) node[draw, rounded corners, anchor=north east] {\small{nested matroids}};
        \draw (11,1.5) node[draw, rounded corners, anchor=south east] {\small{PI extension matroids}};
        \draw[thick, <->] (-1,0.1) -- (-1,1.4);
        \draw[thick, <->] (10,0.1) -- (10,1.4);
        \draw[thick, <->] (3.1,1.8) -- (6.9,1.8);
        \draw[thick, <->] (2.1,-0.3) -- (8,-0.3);
        \node at (3.2,1.8) [anchor=south west] {\footnotesize{\cite[Thm 3.14]{bonin2003lattice}}};
        \node at (9.9,0.7) [anchor=north east] {\footnotesize{\cite[Lem 2.2]{bonin2008lattice}}};
        \node at (9.9,0.7) [anchor= south east] {\footnotesize{\cite[Lem 2]{oxley_matroids_1982}}};
        \node at (-1,0.7) [anchor=west] {\footnotesize{\cref{thm:doublechains_are_matroids}}};
        \node at (5,-0.3) [anchor=north] {\footnotesize{\cref{thm:doublechains_are_nestedmatroids}}};%
    \end{tikzpicture}
   \caption{Diagram displaying where to find proofs relating the different notions of nested matroids.}
    \label{fig:nestedmatroidsdiagram}
\end{figure}

A flat $F\subseteq \groundset $ of a matroid $\matroid $ is called a \Def{cyclic flat} if it is the union of all circuits that it contains, that is
\begin{equation*}
F = \bigcup_{\substack{ C \text{ circuit } \\ C \subseteq F }} C \, .
\end{equation*}
A matroid $\matroid$ with ground set $\groundset$ is called a \Def{nested matroid} if the poset of cyclic flats (ordered by inclusion) is a chain.

For a matroid $\matroid $ with rank $r$, ground set $\groundset$, bases $\mathcal B$ and $e\not\in \groundset$, we define the \Def{free extension} of $\matroid $ by $e$ to be the matroid $M+e$ of rank $r$ that has bases
\begin{equation*}
\mathcal B \cup \left\{ B \cup \{e\} \setminus \{b\} | B \in \mathcal B, b \in B \right\} \, .
\end{equation*}
We also define the \Def{isthmus addition} of $\matroid $ by $e$ to be the matroid $M\oplus e$ of rank $r+1$ that has bases
\begin{equation*}
\{ B \cup \{ e \} | B \in \mathcal B \} \, .
\end{equation*}
A matroid $\matroid$ is said to be a \Def{PI-extension matroid} if it arises from the empty matroid by consecutively adding isthmuses and taking free extensions.
Both works \cite[Lemma 2]{oxley_matroids_1982} and \cite[Lemma 2.2]{bonin2008lattice} established that the definitions of PI-extension matroids and nested matroids are equivalent.

A matroid $\matroid$ with ground set $\groundset$ is called a \Def{generalized Catalan matroid} if it is isomorphic to a Schubert matroid.
In \cite[Theorem 3.14]{bonin2003lattice}, it was shown that a matroid is a PI-extension matroid if and only if it is a generalized Catalan matroid.
For more on lattice path matroids in general, see also \cite{bonin2006lattice}.

A pair of chains $(\uX, \ur)$, where $\uX~=~(X_1,~\ldots,~X_k~=~\groundset~)$ is a series of sets and $\ur = (r_1, \ldots, r_k)$ a series of integers, is called a \Def{matroidal double chains} if it satisfies the following properties:
\begin{enumerate}[(a)]
\item\label{it:DCsetinclusion} $\emptyset \subsetneq X_1 \subsetneq \cdots \subsetneq X_k = \groundset $

\item\label{it:rankinequalities} $0 \leq r_1 < \cdots < r_k = r$.

\item\label{it:DcRanktogether} $0 < |X_1| - r_1 < \cdots <|X_{k-1}| - r_{k-1} \leq |X_k| - r_k = |\groundset | - r$.
\end{enumerate}
We note that the first inequality in \ref{it:rankinequalities} and the last inequality in \ref{it:DcRanktogether} are not strict.
For a matroidal double chain $(\uX, \ur)$, we define the polytope $\Pol(\uX, \ur)$ as
\begin{equation}\label{eq:defpolofdoublechain}
    \Pol(\uX, \ur)\coloneqq \{\vx\in[0,1]^d\colon \sum_{i=1}^d \vx_i =r\,,  \sum_{i\in X_j}\vx_i \leq r_j \forall j\}\,.
\end{equation}
In \cref{thm:doublechains_are_matroids} we show that for every matroidal double chain $(\uX,\ur)$ the polytope $\Pol(\uX,\ur)$ is a matroid base polytope $\Pol_\matroid$.
Hence a double chain $(\uX,\ur)$ defines the unique matroid $\matroid(\uX, \ur)$.
A matroid $\matroid$ with ground set $\groundset $ is called a \Def{double-chain matroid} if there is a matroidal double chain $(\uX, \ur)$ such $\matroid(\uX, \ur) = \matroid$.

For convenience we will show in \cref{appendix:DCarenested} that matroidal double chains actually define matroid base polytopes and hence matroids.
Furthermore, we show that double-chain matroids are isomorphic to Schubert matroids and two distinct matroidal double chains give rise to different double chain matroids.
Vice versa, every generalized Catalan matroid has a representation as a matroidal double chain.
This establishes that matroidal double chains are a faithful representation of nested matroids.

\begin{smpl}\label{smpl:DC}
    \begin{figure}
    \centering
    \tdplotsetmaincoords{70}{120}
    \begin{tikzpicture}[tdplot_main_coords,line cap=butt,line join=round,c/.style={circle,fill,inner sep=1pt},
        declare function={a=3.6;h=2.6;hm=-2.6;}, baseline ={(0,-2)}]
        \path
        (0,0,0) coordinate (A)
        (a,0,0) coordinate (B)
        (a,a,0) coordinate (C)
        (0,a,0) coordinate (D)
    (a/2,a/2,h)  coordinate (S)
    (a/2,a/2,hm)  coordinate (T);
\fill[orange, opacity=0.15] (A) -- (B) -- (C) --(D);
\fill[orange, opacity=0.2] (A) -- (B) -- (T);
\fill[orange, opacity=0.2] (B) -- (C) -- (T);
\fill[orange, opacity=0.2] (C) -- (D) -- (T);
\fill[orange, opacity=0.2] (D) -- (A) -- (T);
\fill[grunH, opacity=0.5] (S)--(D)--(C);
\draw[blauD, opacity=0.7,line width=3pt] (B) -- (T);
\draw[gray, thick] (S) -- (D) -- (C) -- (B) -- cycle (S) -- (C) (T) -- (C) (T) -- (D) (T) -- (B);
\draw[dashed, gray, thick] (S) -- (A) --(D) (A) -- (B) (T) --(A);

\node[anchor=north] at (A) {$(1,0,0,1)$};
\node[anchor= east] at (B) {$(1,0,1,0)$};
\node[anchor=south east] at (C) {$(0,1,1,0)$};
\node[anchor=west] at (D) {$(0,1,0,1)$};
\node[anchor=south ] at (S) {$(0,0,1,1)$};
\node[anchor=north] at (T) {$(1,1,0,0)$};
    \end{tikzpicture}
    \caption{Three matroid base polytopes contained in $\hypersimplex{2}{4}$, corresponding to three double chains given in  \cref{smpl:DC}. }
    \label{fig:samplepathmatroidnested4}
\end{figure}

    Consider $\nd = 4 $ and $r = 2$. 
	The corresponding polytopes $\Pol(\uX, \ur)$ are given in \cref{fig:samplepathmatroidnested4}, all contained in the hypersimplex.

	The matroid base polytope depicted in blue is given by
	\begin{align*}
	\Pol((\{4\}, \{2, 3, 4\}, [4]), (0, 1, 2))
	&=	\left\{\vx\in[0,1]^4\colon \vx_4=0\,, \vx_2+\vx_3=1\,,  \sum_{i=1}^4 \vx_i =2\right\}\,.
	\end{align*}

    The matroid base polytope in orange corresponds to a matroid with double chain $((\{3, 4\}, [4]), (1, 2))$.
    Finally, the double chain $((\{1\}, [4]), (0, 2))$ defines the matroid base polytope shown in green.

\end{smpl}

\begin{obs}
Whenever $k = 1$, the corresponding matroid is a uniform matroid.
Whenever $k = 2, r_1 = 0, |X_1| = d-r$, the corresponding polytope is a point.
\end{obs}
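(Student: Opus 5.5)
The statement to be justified is the observation: if $k=1$ then $\matroid(\uX,\ur)$ is uniform, and if $k=2$, $r_1=0$ and $|X_1|=\nd-r$ then $\Pol(\uX,\ur)$ is a point. Both parts come from reading off the defining inequalities in \eqref{eq:defpolofdoublechain} and comparing them with the hypersimplex.

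\emph{Case $k=1$.} The chain is just $X_1=\groundset$ with $r_1=r$. The constraint $\sum_{i\in X_1}\vx_i\le r_1$ is implied by the equation $\sum_i \vx_i=r$. Hence
\[
\Pol(\uX,\ur)=[0,1]^\nd\cap\Big\{\sum_i\vx_i=r\Big\}=\hypersimplex{r}{\nd},
\]
which is the base polytope of $\uniformatroid{r}{[\nd]}$. Since matroids are determined by their base polytopes, the matroid is uniform.

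\emph{Case $k=2$.} Suppose $r_1=0$ and $|X_1|=\nd-r$, with $X_2=\groundset$ and $r_2=r$.
\begin{itemize}
\item The constraint $\sum_{i\in X_1}\vx_i\le 0$, together with $\vx\in[0,1]^\nd$, forces $\vx_i=0$ for all $i\in X_1$.
\item The total sum must then be $r$ and is carried entirely by the complement $\groundset\setminus X_1$. This complement has exactly $\nd-(\nd-r)=r$ elements.
\item Each of these $r$ coordinates is at most $1$, so each must equal $1$.
\end{itemize}
Thus $\Pol(\uX,\ur)=\{\onebb_{\groundset\setminus X_1}\}$ is a single point: the matroid has exactly one basis.

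This point is nonempty and consistent with the double chain axioms. Condition \ref{it:DcRanktogether} reads $0<|X_1|-0=\nd-r\le \nd-r$, which holds provided $\nd-r>0$.

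There is no real obstacle here. The only care needed is noting that for $k=1$ the chain inequality is redundant, and for $k=2$ the zero-rank constraint pins the coordinates in $X_1$ while the cardinality count pins the rest.
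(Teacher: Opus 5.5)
Your argument is correct. The paper states this observation without proof, and reading it off the defining inequalities of $\Pol(\uX,\ur)$ is exactly the intended verification: for $k=1$ the single chain constraint is redundant and you obtain the hypersimplex, and for $k=2$ the constraint $\sum_{i\in X_1}\vx_i\le 0$ together with $|\groundset\setminus X_1|=r$ pins the polytope to the point $\onebb_{\groundset\setminus X_1}$.
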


\begin{smpl}
Consider the ground set $\groundset = \{0, 1, 2, 3, 4, 5, 6, 7, 8, 9, a\}$ and, on it we define the matroidal double chain given by 
\begin{equation*}\uX = (\{2, 3, a\}, \{0, 1, 2, 3, 5, a\}, \{0, 1, 2, 3, 5, 7, 9, a\}, \groundset)\end{equation*} and $\ur = (1, 3, 4, 5)$.
According to \cref{thm:doublechains_are_nestedmatroids}, to this matroidal double chain it corresponds a generalized Catalan matroid.
Indeed, this generalized Catalan matroid is described in \cref{fig:nestedmatroidpath}.
See \Cref{constr:path} for a general construction.
\end{smpl}

\begin{figure}
    \centering
    \begin{tikzpicture}[scale=0.8]
 \draw[step=1cm, thick, gray] (0,0) grid (6,5);
 \draw[orange, ultra thick, , opacity=0.8] (0,0) -- (2,0) -- (2,1) -- (3,1) -- (3,2) -- (4,2) -- (4,4) -- (6,4) -- (6,5);
 \node[anchor= south ] at (0.5,0){$8$};
 \node[anchor= south ] at (1.5,0){$4$};
 \node[anchor=  west] at (2,0.5){$6$};
 \node[anchor=  south] at (2.5,1){$7$};
 \node[anchor=  west] at (3,1.5){$9$};
 \node[anchor=  south] at (3.5,2){$1$};
 \node[anchor=  west] at (4,2.5){$5$};
 \node[anchor=  west] at (4,3.5){$0$};
 \node[anchor=  south] at (4.5,4){$a$};
 \node[anchor=  south] at (5.5,4){$2$};
 \node[anchor=  west] at (6,4.5){$3$};
\end{tikzpicture}

    \caption{A generalized Catalan matroid, corresponding to a path in the $6\times 5$ grid with its labels. }
    \label{fig:nestedmatroidpath}
\end{figure}

We note what it means for a  nested matroids to be loopless in the language of the aforementioned different definitions.

\begin{obs}\label{prop:looplessnestedmatroid}
A nested matroid is called \Def{loopless} (or \Def{loopfree}) if one of the following equivalent conditions hold:
\begin{enumerate}[(i)]
 \item The matroid does not contain loops, \emph{i.e.} elements that are not contained in any basis.
 \item The only cyclic flat of $\matroid$ with rank $0$ is the empty set.
 \item The matroid is a PI-extension from $U_1^1$, the uniform matroid on one element with rank one.
 \item The matroid is a generalized Catalan matroid that corresponds to a path $\Path$ in the grid whose last step is a  north step.
 \item The matroid $\matroid = \matroid(\uX, \ur)$ is a double-chain matroid that has $r_1>0$.
\end{enumerate}
\end{obs}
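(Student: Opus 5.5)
We would prove the five conditions equivalent by relating each of (ii)--(v) to (i). Throughout we use that loops are an isomorphism invariant. We also use \cref{obs:rkpol}, which identifies loops with coordinates $i$ satisfying $\max_{\vx\in\Pol_\matroid}\vx_i=0$.

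\emph{(i)$\Leftrightarrow$(ii).} Let $L$ be the set of loops of $\matroid$. Then $L=\cl(\emptyset)$ is the unique flat of rank $0$. Each loop $e$ forms the circuit $\{e\}$, so $L$ is the union of the circuits it contains and is therefore a cyclic flat. Every cyclic flat of rank $0$ is a flat of rank $0$ and hence equals $L$. So the only rank-$0$ cyclic flat is $\emptyset$ exactly when $L=\emptyset$.

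\emph{(i)$\Leftrightarrow$(iii).} Rank never decreases along a sequence of PI-extensions. Isthmus addition adds a coloop and keeps all loops. A free extension of a matroid of rank $r\ge1$ keeps all loops and adds none, since the new element $e$ lies in $B\cup\{e\}\setminus\{b\}$. A free extension of a rank-$0$ matroid adds a loop. New loops can therefore appear only during an initial run of free extensions of the empty matroid. It follows that the result is loopless if and only if the first operation is an isthmus addition, i.e.\ the matroid is a PI-extension from $\unifMat{1}{1}$.

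\emph{(i)$\Leftrightarrow$(iv).} Since generalized Catalan matroids are isomorphic to Schubert matroids, it suffices to treat $\SM(A)$ with $A=\{a_1<\dots<a_r\}$. Suppose $i\le a_r$. If $i\le r$, then $i\in\{1,\dots,r\}\leqG A$. If $i>r$, then $\{1,\dots,r-1,i\}\leqG A$ because $j\le a_j$ for all $j$. Conversely, every $B\leqG A$ satisfies $\max B\le a_r$. Hence the loops of $\SM(A)$ are exactly $\{a_r+1,\dots,\nd\}$, so $\SM(A)$ is loopless iff $\nd\in A$, i.e.\ iff the last step of $\Path_A$ is $N$.

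\emph{(i)$\Leftrightarrow$(v).} We work with $\Pol(\uX,\ur)$ from \eqref{eq:defpolofdoublechain}. If $r_1=0$, every point satisfies $\vx_i=0$ for $i\in X_1\neq\emptyset$, so $X_1$ consists of loops. If $r_1>0$, set $X_0=\emptyset$ and $r_0=0$, and define $\vx$ to be constant equal to $(r_j-r_{j-1})/|X_j\setminus X_{j-1}|$ on $X_j\setminus X_{j-1}$. Condition \ref{it:rankinequalities} together with $r_1>0$ makes every value positive. Condition \ref{it:DcRanktogether} gives $|X_j\setminus X_{j-1}|\ge r_j-r_{j-1}$, so every value is at most $1$. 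Moreover $\sum_{i\in X_j}\vx_i=r_j$ for all $j$, so $\vx\in\Pol(\uX,\ur)=\Pol_\matroid$ (using \cref{thm:doublechains_are_matroids}) with all coordinates positive. Hence there are no loops.

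The main subtlety is this last step. It relies on the identification of $\Pol(\uX,\ur)$ with $\Pol_\matroid$, which we take from the appendix, and on checking the strictness conventions in \ref{it:rankinequalities} and \ref{it:DcRanktogether} at the boundary index. The explicit point avoids any further structural analysis.
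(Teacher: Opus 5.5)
Your proof is correct, and it takes a genuinely different route from the paper, which states this observation without proof as a summary of the cited equivalences (nested $=$ PI-extension $=$ generalized Catalan) and of the appendix.

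\begin{itemize}
\item Parts (i)$\Leftrightarrow$(ii), (iii) and (iv) are the natural checks. The loops form $\cl(\emptyset)$. New loops arise only from free extensions in rank $0$. The loops of $\SM(A)$ are exactly $\{a_r+1,\dots,\nd\}$.
\item The real divergence is (v). The appendix shows that $\uX$ is exactly the chain of non-empty cyclic flats of $\matroid(\uX,\ur)$ (\cref{lm:cyclicflatsofdoublechains}) and that $\rk(X_j)=r_j$ (\cref{cor:rankofXj}). With these, (ii)$\Leftrightarrow$(v) is immediate, since $X_1$ is the smallest non-empty cyclic flat and has rank $r_1$.
\item Your explicit point, with coordinates $(r_j-r_{j-1})/|X_j\setminus X_{j-1}|\in(0,1]$, needs only the polytope identification of \cref{thm:doublechains_are_matroids}. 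It therefore avoids the cyclic-flat analysis, which in the paper relies on an external result about connected flats of lattice path matroids.
\item What the paper's route gives in exchange is a structural reading of $r_1$ as the rank of the smallest non-empty cyclic flat. Your argument yields only the loop criterion.
\end{itemize}

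Three small points are worth stating explicitly:
\begin{itemize}
\item For (i)$\Rightarrow$(iii), (iv), (v) you tacitly use that a nested matroid has each of these representations at all. That is supplied by the cited equivalences and by \cref{thm:doublechains_are_nestedmatroids}.
\item The observation presumes $\nd\ge 1$. The empty matroid is loopless but has no first PI-step, no last path step, and no matroidal double chain.
\item In (iv), the case $r=0$ needs the convention $a_0=0$.
\end{itemize}
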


We enumerate all loopless nested matroids for $d = 2$ and $d = 3$ in \cref{tab:loopless_SM_d2d3}.

\subsection{Valuative functions}
In this section we will be dealing with valuative functions on polytopes.
Over time and throughout the literature, different notions of \emph{weakly valuative function} arose (see \cite{groemer1978extension,derksen2010valuative,
billera2009quasisymmetric,ardila2023valuations,
eur2023stellahedral}).
On the other hand, the definition of \emph{strongly valuative} is unambiguous throughout.
In \cite[Appendix]{eur2023stellahedral} many notions of weakly valuative are surveyed.

For a polyhedron $\Pol\subseteq\R^d$, its \Def{indicator function} is a function $\onebb[\Pol] : \R^\nd \to \{0, 1\}$ given by
\begin{equation*}
 \bbone[\Pol](\vx)\coloneqq\begin{cases}
                    1&\text{ if } \vx\in\Pol\\
                    0&\text{ if } \vx\notin\Pol
                   \end{cases}\,.
\end{equation*}
Given a set of polyhedra $\setofPol$ in $\R^\nd$, the \Def{indicator $\Z$-module of polyhedra}  $\indicatorgroup(\setofPol)$ is the $\Z$-module of functions $\R^\nd \to \Z$ additively generated by all indicator functions $\onebb[\Pol]$ of polytopes $\Pol\in\setofPol$.

Note that in the indicator $\Z$-module of polyhedra of $\setofPol$, additive equivalents of inclusion exclusion relations hold.
For instance, if $\Pol, \Qol$ are two polyhedra in $\setofPol$ such that $\Pol\cap \Qol $ and $\Pol\cup \Qol$ are both polyhedra in $\setofPol$, then
\begin{equation}\label{eq:simple_valuative}
\onebb[\Pol] + \onebb[\Qol] = \onebb[{\Pol\cap \Qol}] + \onebb[{\Pol \cup \Qol}]\, .
\end{equation}
This should not be confused with the McMullen algebra of polytopes, introduced by {McMullen} in \cite{mcmullen1989polytope}, where this additive relation also holds, but a translation invariance also holds.

Let $\setofPol$ be a collection of polyhedra.
We say that $\subdiv=\{\Pol_1, \ldots , \Pol_m\}\subseteq \setofPol$ is a
\Def{polyhedral subdivision} of a polyhedron $\Pol\in \setofPol$ into polyhedra in $\setofPol$ if $\cup_{i=1}^m \Pol_i = \Pol$, and for every $1 \leq i \leq j \leq m$ the intersection $\Pol_i\cap \Pol_j\in\setofPol$ is  a face of both $\Pol_i, \Pol_j$.
Some definitions require polyhedral subdivisions to be closed under taking faces, i.e., if $\Pol_i\in\subdiv$ and $\face$ is a face of $\Pol_i$ then $\face\in\subdiv$.
Both subdivisions depicted in \cref{fig:subdivision} are not closed under taking faces.
\begin{rem}
The definition presented in \cite[Definition 3.1]{derksen2010valuative} for a polyhedral subdivision is not strong enough to make the definition of weakly valuative functions coherent.
Specifically, from the definition therein the intersection of two polytopes $\Pol_i\cap \Pol_j$ is only required to be \emph{contained} in a proper face of $\Pol_i, \Pol_j$.
Adjusting the definition to the one above does not bring material changes to the proof thereafter, according to \cite{fink_personalcomunication}.
\end{rem}
Let $\setofPol$ be a collection of polytopes, and $A$ an additive group.
A map $\varphi:\setofPol \to A$ is \Def{weakly valuative} if
for every  polyhedral subdivision $\{\Pol_1, \ldots , \Pol_m\}$ of a polytope $\Pol\in\setofPol$ into polytopes in $\setofPol$, the so called \Def{valuative relation} holds, that is we have
\begin{equation}
\label{eq:valdef}
\sum_{I\subseteq [m] } (-1)^{|I|} \varphi(\Pol_I) = 0 \, ,
\end{equation}
where $\Pol_I \coloneqq \cap_{i \in I} \Pol_i$, and we use the convention that $\Pol_{\emptyset } = \Pol$.

Notice that, a priori, the relations required in Equation~\eqref{eq:valdef} for weakly valuative functions depend on the definition of polyhedral subdivision, since different polyhedral subdivisions impose different relations as we see in  \Cref{smpl:subdiv}.
However, \Cref{rem:subdivweakly} explains how this issues is solved.

\begin{smpl}\label{smpl:subdiv}
In \cref{fig:subdivision} we present two subdivisions of the triangle $T$.
A valuative function $\varphi$ would have to satisfy \eqref{eq:valdef} for each polyhedral subdivisions.
For the subdivision in \cref{fig:subdivision1} we get:
\begin{equation*}
0 = \varphi(\Pol) - \varphi(\mathsf{T}_1) - \varphi(\mathsf{T}_2) - \varphi(\mathsf{T}_3) + \varphi(\underbrace{\mathsf{T}_1\cap \mathsf{T}_2}_{=\mathsf{S}_3}) + \varphi(\underbrace{\mathsf{T}_1\cap \mathsf{T}_3}_{=\mathsf{S}_2}) + \varphi(\underbrace{\mathsf{T}_2\cap \mathsf{T}_3}_{=\mathsf{S}_1}) - \varphi(\underbrace{\mathsf{T}_1\cap \mathsf{T}_2\cap \mathsf{T}_3}_{=\mathsf{P}_0})
\end{equation*}
For the subdivision in \cref{fig:subdivision2} we get:
\begin{equation*}
\begin{split}
0 &= \varphi(\Pol) - \varphi(\mathsf{T}_1) - \varphi(\mathsf{T}_2) - \varphi(\mathsf{T}_3) - \varphi(\mathsf{S}_2)\\
& \qquad + \varphi(\underbrace{\mathsf{T}_1\cap \mathsf{T}_2}_{=\mathsf{S}_3}) + \varphi(\underbrace{\mathsf{T}_1\cap \mathsf{T}_3}_{=\mathsf{S}_2}) + \varphi(\underbrace{\mathsf{T}_2\cap \mathsf{T}_3}_{=\mathsf{S}_1})
+ \varphi(\underbrace{\mathsf{S}_2\cap \mathsf{T}_1}_{=\mathsf{S}_2})+ \varphi(\underbrace{\mathsf{S}_2\cap \mathsf{T}_2}_{=\mathsf{P}_0}) + \varphi(\underbrace{\mathsf{S}_2\cap \mathsf{T}_3}_{=\mathsf{S}_2}) \\
&\qquad\qquad - \varphi(\underbrace{\mathsf{T}_1\cap \mathsf{T}_2\cap \mathsf{T}_3}_{=\mathsf{P}_0}) - \varphi(\underbrace{\mathsf{S}_2\cap \mathsf{T}_2\cap \mathsf{T}_3}_{=\mathsf{P}_0}) - \varphi(\underbrace{\mathsf{T}_1\cap \mathsf{S}_2\cap \mathsf{T}_3}_{=\mathsf{S}_2}) - \varphi(\underbrace{\mathsf{T}_1\cap \mathsf{T}_2\cap \mathsf{S}_2}_{=\mathsf{P}_0})\\
& \qquad\qquad\qquad + \varphi(\underbrace{\mathsf{T}_1\cap \mathsf{T}_2\cap \mathsf{T}_3\cap \mathsf{S}_2}_{=\mathsf{P}_0})\\
&= \varphi(\Pol) - \varphi(\mathsf{T}_1) - \varphi(\mathsf{T}_2) - \varphi(\mathsf{T}_3) - \varphi(\mathsf{S}_2)\\
&\qquad+ \varphi(\mathsf{S}_3) + \varphi(\mathsf{S}_2) + \varphi(\mathsf{S}_1) + \varphi(\mathsf{S}_2) + \varphi(\mathsf{P}_0) + \varphi(\mathsf{S}_2)\\
& \qquad\qquad- \varphi(\mathsf{P}_0) - \varphi(\mathsf{P}_0) - \varphi(\mathsf{S}_2) - \varphi(\mathsf{P}_0) + \varphi(\mathsf{P}_0)\\
&= \varphi(\Pol) - \varphi(\mathsf{T}_1) - \varphi(\mathsf{T}_2) - \varphi(\mathsf{T}_3) + \varphi(\mathsf{S}_1) + \varphi(\mathsf{S}_2) + \varphi(\mathsf{S}_3) - \varphi(\mathsf{P}_0)\, .
\end{split}
\end{equation*}
We notice that we end up with the same relation after canceling terms with opposite signs in either case.

\begin{figure}
\centering
\begin{subfigure}{.49\textwidth}
\centering
 \begin{tikzpicture}
 \draw[very thick] (0,0) -- (6,0) -- (3,5) -- (0,0);
 \fill[orange!20] (0,0) -- (6,0) -- (3,5) -- (0,0);
 \draw[very thick, orange] (0,0) -- (3,2);
 \draw[very thick] (3,2) -- (6,0);
 \draw[very thick] (3,5) -- (3,2);
 \node[anchor= south] at (3,0.3) {$\textcolor{orange}{\mathsf{T}_1}$};
 \node at (4,2.3) {$\textcolor{orange}{\mathsf{T}_2}$};
 \node at (2.1,2.3) {$\textcolor{orange}{\mathsf{T}_3}$};
 \node at (4.5,1.4) {$\mathsf{S}_1$};
 \node at (1.5,1.4) {$\textcolor{orange}{\mathsf{S}_2}$};
 \node[anchor = west] at (3,3.5) {$\mathsf{S}_3$};
 \node[anchor= north] at (3,1.95) {$\mathsf{P}_0$};
 \node[anchor=south] at (3,5) {$\mathsf{P}_1$};
 \node[anchor= north east] at (0,0) {$\mathsf{P}_2$};
 \node[anchor= north west] at (6,0) {$\mathsf{P}_3$};
\end{tikzpicture}
\caption{$\{\mathsf{T}_1,\mathsf{T}_2,\mathsf{T}_3, \mathsf{S}_2\}$}\label{fig:subdivision2}
\end{subfigure}%
\begin{subfigure}{.49\textwidth}
\centering
\begin{tikzpicture}
 \draw[very thick] (0,0) -- (6,0) -- (3,5) -- (0,0);
  \fill[orange!20] (0,0) -- (6,0) -- (3,5) -- (0,0);
 \draw[very thick] (0,0) -- (3,2) -- (6,0);
 \draw[very thick] (3,5) -- (3,2);
 \node[anchor= south] at (3,0.3) {$\textcolor{orange}{\mathsf{T}_1}$};
 \node at (4,2.3) {$\textcolor{orange}{\mathsf{T}_2}$};
 \node at (2.1,2.3) {$\textcolor{orange}{\mathsf{T}_3}$};
 \node at (4.5,1.4) {$\mathsf{S}_1$};
 \node at (1.5,1.4) {$\mathsf{S}_2$};
 \node[anchor = west] at (3,3.5) {$\mathsf{S}_3$};
 \node[anchor= north] at (3,1.95) {$\mathsf{P}_0$};
 \node[anchor=south] at (3,5) {$\mathsf{P}_1$};
 \node[anchor= north east] at (0,0) {$\mathsf{P}_2$};
 \node[anchor= north west] at (6,0) {$\mathsf{P}_3$};
\end{tikzpicture}
\caption{ $\{\mathsf{T}_1,\mathsf{T}_2,\mathsf{T}_3\}$}\label{fig:subdivision1}
\end{subfigure}
\caption{Two distinct subdivisions of the  triangle $\conv(\mathsf{P}_1,\mathsf{P}_2,\mathsf{P}_3)$.
\label{fig:subdivision}}
\end{figure}

\end{smpl}

\begin{rem}\label{rem:subdivweakly}
As the previous example made clear, the definition of weakly valuative function entails a linear relation that relies on the definition of polyhedral subdivisions.
Even though these subdivisions look very different, these lead to equivalent linear relations, after a few cancellations.
To make things simple, a cancellation-free formula is presented in \cite[Theorem 3.5.]{ardila2010valuations} whenever $\setofPol$ is the set of all matroid base polytopes:
Consider a polyhedral subdivision $\subdiv = \{\Pol_1, \ldots , \Pol_m\}$ of a polytope $\Pol$.
We say that a polytope $\Qol$ is an \Def{internal face} of $\subdiv$ if it is a face of some $\Pol_i$ and it contains an interior point of $\Pol$.
The valuative relation in \eqref{eq:valdef} is equivalent to:
\begin{equation*}
\varphi(\Pol) = \sum_{\Qol} \varphi(\Qol) (-1)^{\dim \Qol - \dim \Pol} \, ,
\end{equation*}
where the sum runs over all internal faces $\Qol$ of $\subdiv$.
This works for every intersection-closed collection of polytopes, according to \cite{ardila_personalcomunication}.
\end{rem}

A function $\varphi:\setofPol\to A$ is called a \Def{strongly valuative function} if there exists an additive map $\widehat{\varphi}: \indicatorgroup(\setofPol) \to A$ such that $\varphi(\Pol) = \widehat{\varphi}(\onebb[\Pol])$ for every polytope $\Pol \in \setofPol$.
\begin{equation*}
  \begin{tikzcd}
    \setofPol \arrow[rr, "\varphi"] \arrow[dr, "\onebb"']&&A\\
    &\indicatorgroup(\setofPol)\arrow[ur, "\exists \widehat{\varphi}"']&
\end{tikzcd}
\end{equation*}

Strongly valuative maps are always weakly valuative, the converse is not true in general.
However, in the setting of this paper those two notions are equivalent.

\begin{thm}[{\cite[Corollary 3.9.]{derksen2010valuative}}]
A weakly valuative function on the set $\mathcal P$ of matroid base polytopes is a strongly valuative function.
\end{thm}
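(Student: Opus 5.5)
The plan is to prove the statement at the level of abelian groups. Let $\Z[\setofPol]$ be the free abelian group on the set $\setofPol$ of matroid base polytopes. Let $R\subseteq\Z[\setofPol]$ be the subgroup generated by all valuative relations \eqref{eq:valdef}, one for each polyhedral subdivision into matroid base polytopes. Let $\iota\colon\Z[\setofPol]\to\indicatorgroup(\setofPol)$ be the additive map $\Pol\mapsto\onebb[\Pol]$. A weakly valuative $\varphi$ is exactly a homomorphism $\Z[\setofPol]\to A$ vanishing on $R$. A strongly valuative $\varphi$ is one vanishing on $\ker\iota$.

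We always have $R\subseteq\ker\iota$, since indicator functions satisfy inclusion--exclusion over a subdivision. So it suffices to prove $\ker\iota\subseteq R$, i.e.\ that the induced map $\bar\iota\colon\Z[\setofPol]/R\to\indicatorgroup(\setofPol)$ is injective. The extension $\widehat\varphi$ is then $\varphi\circ\bar\iota^{-1}$ on the image.

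To show injectivity I would produce a set $\mathcal N$ of matroids with two properties. First, $\mathcal N$ spans $\Z[\setofPol]/R$. Second, $\{\onebb[\Pol_\natroid]\}_{\natroid\in\mathcal N}$ is $\Z$-linearly independent in $\indicatorgroup(\setofPol)$. Then any element of $\ker\iota$ is congruent modulo $R$ to a combination of elements of $\mathcal N$ lying in $\ker\iota$, which must be zero, so $\ker\iota\subseteq R$. The natural choice is $\mathcal N=$ the double-chain matroids $\matroid(\uX,\ur)$ on $[\nd]$, i.e.\ all labelled nested matroids. By \cref{appendix:DCarenested} these are in bijection with matroidal double chains.

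For independence, I would order double chains, for instance by the flag $\uX$ refined by $\ur$, and establish a unitriangularity. For each $(\uX,\ur)$, pick a test point $\vx$ in $\Pol(\uX,\ur)$ at which all the inequalities $\sum_{i\in X_j}\vx_i\le r_j$ are tight. Alternatively, pick a small neighbourhood germ at such a point. Then check that every other double chain whose polytope contains this germ is larger in the order. Pairing indicator functions with such germ evaluations gives a triangular matrix with ones on the diagonal, hence independence. Checking that the tight inequalities detect $(\uX,\ur)$ uniquely should follow from the defining description \eqref{eq:defpolofdoublechain}.

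The main obstacle is spanning: every matroid base polytope must be expressed modulo $R$ through nested ones, using only genuine subdivisions. My first attempt is induction on the number of cyclic flats, or on the length of a longest non-chain in the lattice of cyclic flats. Given two incomparable cyclic flats $F,G$, cut $\Pol_\matroid$ by the hyperplane $\sum_{i\in F}\vx_i=\rk(F)-1$, or use the split along $F$. This gives a subdivision into two matroid base polytopes and their common facet, each with a "more nested" cyclic-flat structure. The valuative relation \eqref{eq:valdef} for this subdivision rewrites $\matroid$ modulo $R$, and the induction terminates at matroids whose cyclic flats form a chain, i.e.\ nested matroids.

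The delicate points are twofold. One must verify that these cuts really produce matroid polytopes, so that we have edge directions $\ve_i-\ve_j$. One must also find a monotone measure guaranteeing termination. If single splits do not suffice, I would instead follow Derksen--Fink and use the ``tangent cone'' decomposition, expressing $\onebb[\Pol_\matroid]$ through relations coming from regular subdivisions induced by the rank function. Here the key input is a Möbius-inversion formula over chains of cyclic flats, as in Hampe's formula, realised step by step by subdivisions.
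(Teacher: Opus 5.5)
The paper gives no proof of this statement; it quotes \cite[Corollary 3.9]{derksen2010valuative}. Judged on its own, your reduction is sound. Weak valuativity means vanishing on the subgroup $R$ of subdivision relations, and strong valuativity means vanishing on $\ker\iota$. We have $R\subseteq\ker\iota$ by inclusion--exclusion. A family that spans $\Z[\setofPol]/R$ and has $\Z$-independent indicator functions then forces $\ker\iota\subseteq R$. Independence of the nested-matroid indicators is true (it is \cref{thm:DF10_original}), although your germ argument for it is only a sketch.

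The genuine gap is the spanning step, and the problem is structural. Your induction only uses relations coming from subdivisions of $\Pol_\matroid$ itself, and these cannot reach every matroid. Let $\matroid$ be the matroid on $[4]$ with bases $\{1,3\},\{1,4\},\{2,3\},\{2,4\}$.
\begin{itemize}
\item Its cyclic flats $\emptyset,\{1,2\},\{3,4\},[4]$ do not form a chain, so $\matroid$ is not nested.
\item $\Pol_\matroid$ is a square. A subdivision not containing $\Pol_\matroid$ as a cell would need a triangle on three of its vertices, with an edge in direction $\pm(\ve_1-\ve_2)\pm(\ve_3-\ve_4)$. That triangle is not a matroid polytope.
\item So every matroid subdivision of $\Pol_\matroid$ has $\Pol_\matroid$ as a cell, and its valuative relation cancels to zero. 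Consistently, your hyperplane $\vx_1+\vx_2=\rk(\{1,2\})-1=0$ misses $\Pol_\matroid$ entirely.
\end{itemize}
The square can only be rewritten from above. Split $\hypersimplex{2}{4}$ along $\vx_1+\vx_2=1$. The two cells are square pyramids: $\Pol_{\matroid_1}$ has as bases all $2$-sets except $\{1,2\}$, and $\Pol_{\matroid_2}$ all except $\{3,4\}$, i.e.\ $\matroid_2=\SM(\{2,4\})$. Both are nested, and they meet in $\Pol_\matroid$. This gives $\matroid\equiv\matroid_1+\matroid_2-\uniformatroid{2}{[4]}$ modulo $R$. A correct spanning argument must therefore use relations in which $\Pol_\matroid$ is an interior cell of a subdivision of a different polytope. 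It also needs a measure showing that such upward moves terminate. That is the substance of the Derksen--Fink result, and your plan does not supply it.

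Two further steps fail as stated.
\begin{itemize}
\item The cut $\sum_{i\in F}\vx_i=\rk(F)-1$ for a cyclic flat $F$ need not produce matroid polytopes. Take the rank-$3$ matroid on $[6]$ whose only dependent triples are $\{1,2,6\}$, $\{3,4,6\}$, $\{1,3,5\}$ (realizable in the plane). The cyclic flats $F=\{1,3,5\}$ and $\{1,2,6\}$ are incomparable. The face of $\Pol_\matroid$ maximizing $2\vx_6+\vx_1+\vx_2+\vx_3+\vx_4$ is the square on $\{1,3,6\},\{1,4,6\},\{2,3,6\},\{2,4,6\}$. On these vertices $\vx_1+\vx_3+\vx_5$ takes the values $2,1,1,0$, so the hyperplane $\vx_1+\vx_3+\vx_5=1$ cuts the square along a diagonal. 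Hence $\Pol_\matroid\cap\{\vx_1+\vx_3+\vx_5\le 1\}$ has a non-matroidal triangular face.
\item The fallback through a M\"obius/Hampe-type formula over chains of cyclic flats is circular. That formula is an identity in $\indicatorgroup(\setofPol)$. Applying it to $\varphi$ presupposes strong valuativity, unless each step is realized by explicit subdivisions, which is exactly the missing construction.
\end{itemize}
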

From now on, we will call weakly or strongly valuative functions on matroids simply \Def{valuative}.
We will also say the map is a \Def{valuation}.

\subsection{Chromatic word-quasisymmetric functions of matroids}\label{ssec:chrmwqsym}

In this section we work over an infinite collection of non-commuting variables $\bw = \bw_1, \bw_2, \ldots $.
For every $f: [\nd] \to \Z_{> 0} $, we define the monomial $\bw_{f} = \bw_{f(1)} \bw_{f(2)} \cdots \bw_{f(\nd)}$.
A \Def{formal power series} of degree $d$ on the non-commuting variables $\bw $ is a formal sum $F(\bw) = \sum_{f:[\nd] \to \Z_{\geq1}} c_{f} \bw_{f}$.

A \Def{word-quasisymmetric function} of degree $\nd$ is a formal sum $\sum c_{f} \bw_{f}$ such that for two functions $f, g\colon [\nd]\to \R$ with the same set composition type, that is $\opi(f) = \opi(g)$ are the same set composition of $[\nd]$, the coefficients $c_f, c_g$ of $\bw_{f}, \bw_{g} $ are the same.
We write $\WQSym_\nd $ for the vector space of word-quasisymmetric functions of degree $\nd$, and we let $\WQSym = \oplus_{\nd\geq 0} \WQSym_\nd$ be the space of finite sums of homogeneous word-quasisymmetric functions, which we simply call \Def{word-quasisymmetric functions}.
This structure was defined in \cite{bergeron2009hopf}.

The algebra of word-quasisymmetric functions should not be confused with the algebra of \emph{non-commutative symmetric functions} (see for instance \cite{hicks2024combinatorial}).
The latter one arises as the non-commutative version of symmetric functions, arising as the dual space of the quasisymmetric functions.

The \Def{monomial basis} $\{ \Mnco_{\otau} \}_{\substack{\otau \models [\nd ]\\ \nd \geq 0}}$ is a basis of the algebra of word-quasisymmetric functions, is indexed by set compositions $\otau$ and given by
\begin{equation*}
\Mnco_{\otau} \coloneqq \sum_{\substack{f \colon [d]\to \Z_{\geq1}\\ \opi(f)=\otau}} \bw_f \, ,
\end{equation*}
where the sum runs over all functions $f\colon[d]\to \Z_{\geq1}$ such that their set composition type  $\opi(f)$ equals $\otau$.

For a matroid $\matroid=([\nd],\bases)$ we define the \Def{chromatic word-quasisymmetric function of $\matroid$} by
\begin{equation}\label{eq:matroidbaseswordchromaticfunction}
 \ncPsi(\matroid) \coloneqq \sum_{\substack{f\colon[\nd]\to \Z_{\geq1}\\ \text{ is $\matroid$-generic}}} \bw_f\,.
\end{equation}
this defines a map $\ncPsi : \MatLinSpace \to \WQSym$, where $\MatLinSpace$ denotes the (graded) vector space of matroids (see Equation~\eqref{eq:defmatspace}).
On the other hand, a function $f\colon[\nd]\to \Z_{\geq1}$ is $\matroid$-generic if and only if the type $\opi(f)$ of $f$ is $\matroid$-generic.
Therefore
\begin{equation*}
\ncPsi(\matroid) = \sum_{\substack{\substack{\opi\models[d] \\ \text{is $\matroid$-generic}}}} \Mnco_{\opi}\,\end{equation*}
and this is indeed a word-quasisymmetric function.
Recall that every set composition with only singleton blocks is generic for every matroid.
Hence, those monomials will be part of every chromatic word-quasisymmetric function of a matroid.
Note that a function $f\colon[\nd]\to \Z$ is $\matroid$-generic precisely when the integer point $(f(1),\dots,f(d))\in\Z^d$ is contained in the open normal cone of some vertex of the matroid base polytope.

\begin{smpl}\label{ex:wordquasisym_new}
Consider the matroid on the ground set $[3]$  of rank one with three bases, that is $\unifMat{1}{3}=([3], \{\{1\},\{2\},\{3\}\})$
Let us compute the chromatic word-quasisymmetric function of this matroid in terms of the monomial basis, that is, using \eqref{eq:matroidbaseswordchromaticfunction}, by going through each of the thirteen set compositions of $[3]$.

 As observed earlier,  every set composition with three parts is $\matroid$-generic and the set composition with just one block is never $\matroid$-generic.
It is left to check the remaining six set compositions with two parts:
 $1|23$, $2|13$, and $3|12$ are not $\unifMat{1}{3}$-generic; the other three are $\unifMat{1}{3}$-generic.
Then,
 \begin{equation}\label{eq:matroidbaseswordchromaticfunctionUniform}
\begin{split}
 \ncPsi(\matroid)= &\ \Mnco_{12|3}+\Mnco_{13|2}+\Mnco_{23|1}\\
 &\quad+\Mnco_{1|2|3}+\Mnco_{1|3|2}+\Mnco_{3|1|2}+\Mnco_{3|2|1}+\Mnco_{2|3|1}+\Mnco_{2|1|3}\,.
\end{split}
\end{equation}
 \begin{figure}
  \begin{subfigure}{.49\linewidth}
    {\includegraphics[trim=230 80 400 40, clip, width=\linewidth]{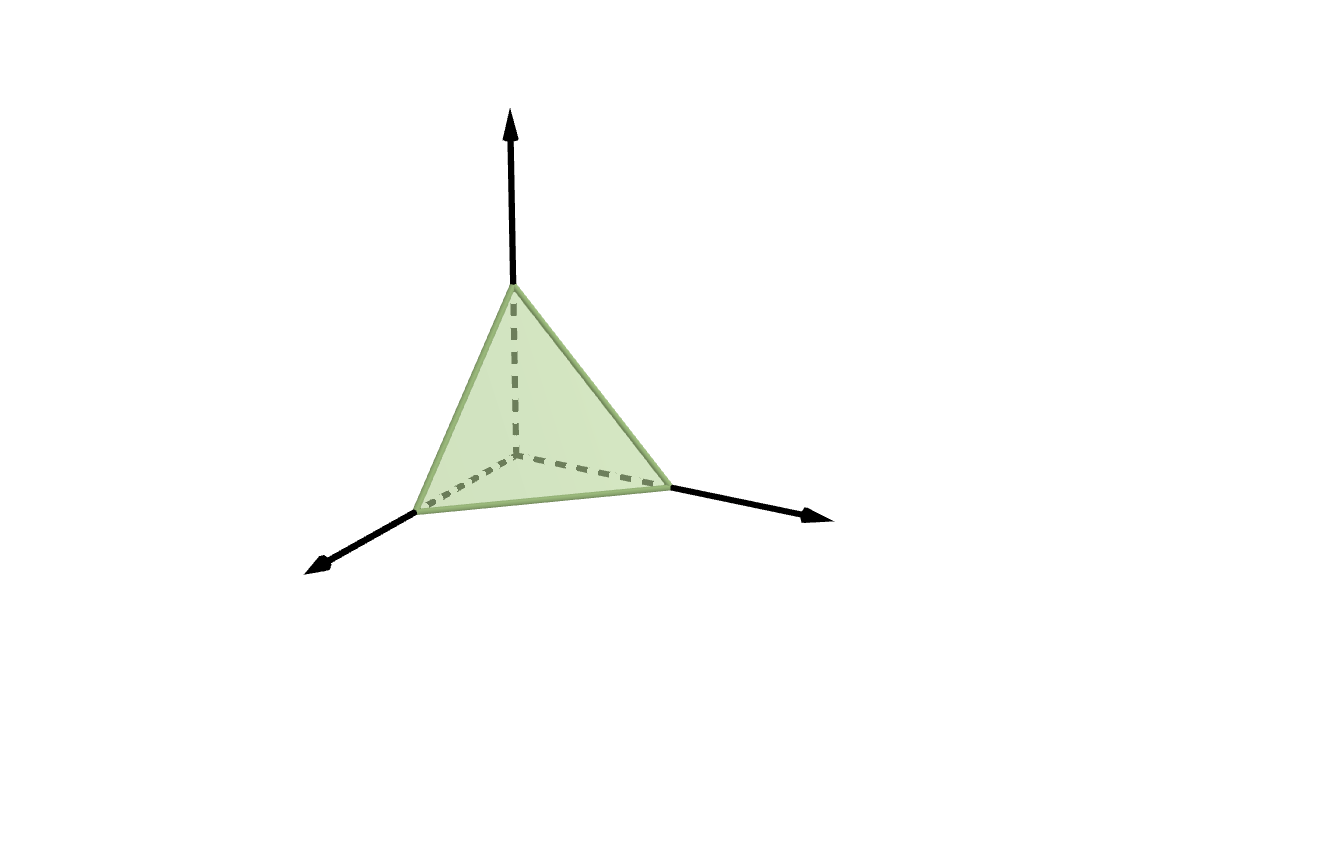}}
    \caption{Matroid base polytope in \cref{ex:wordquasisym_new} is a triangle in $\R^3$.}
  \end{subfigure}
  \begin{subfigure}{.49\linewidth}
  {\includegraphics[trim=230 80 400 40, clip, width=\linewidth]{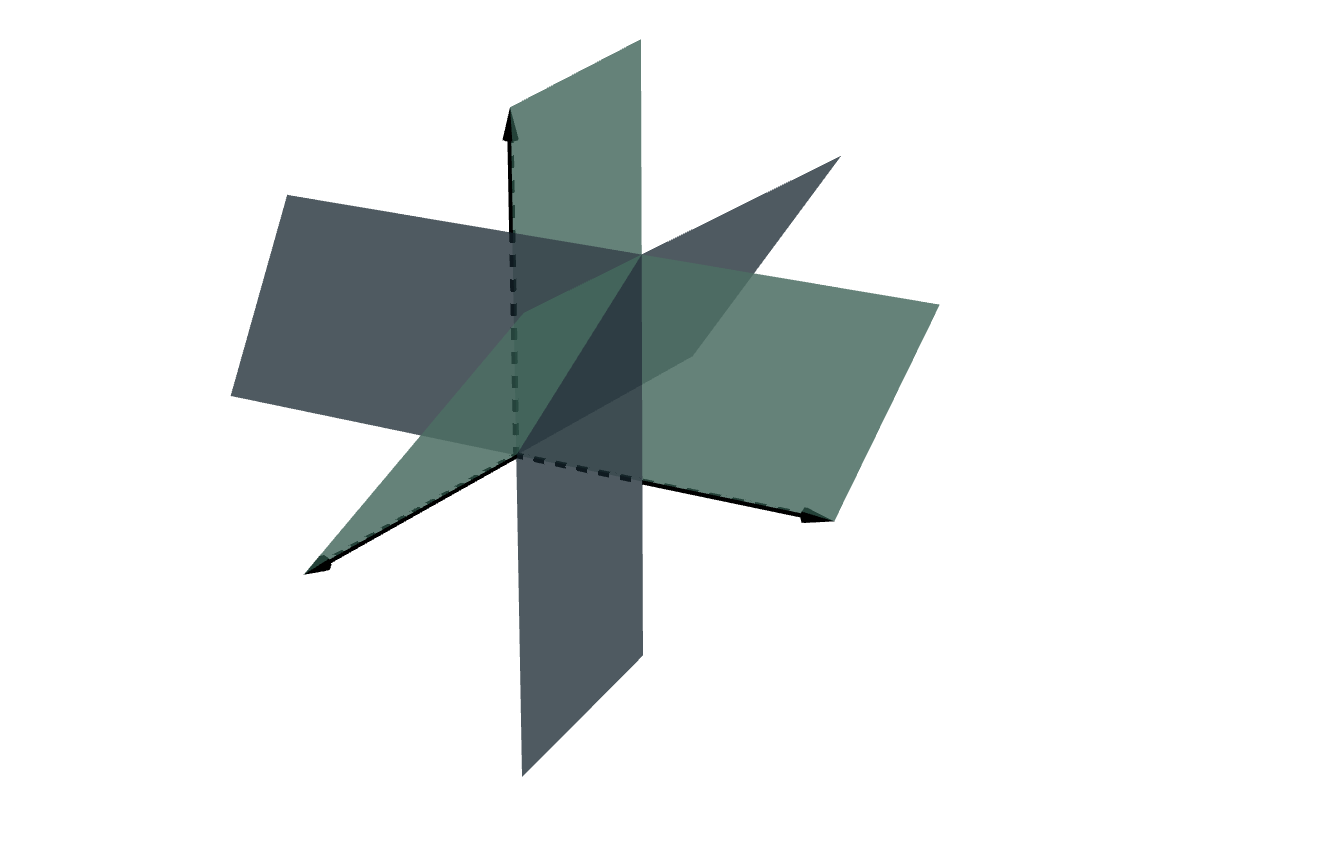}}
  \caption{Normal fan in dark blue, coarsens the braid fan (missing parts in petrol).}
  \label{fig:matroidpolytopeandbraidarrangement_new_fan}
  \end{subfigure}
    \caption{ The matroid base polytope $\Pol_{\unifMat{1}{3}}$, its normal fan, and the braid arrangement. Note that every cone of the braid arrangement that intersects the interior of a full dimensional cone of the normal fan of $\Pol_{\matroid}$ contributes to the sum in \eqref{eq:matroidbaseswordchromaticfunctionUniform}. }\label{fig:matroidpolytopeandbraidarrangement_new}
 \end{figure}

This can also be seen geometrically.
Recall that a monomial basis element $\Mnco_{\opi}$ corresponds to a cone $\BraidC_{\opi}$ of the braid arrangement (\Cref{lem:braid}).
Consider the matroid base polytope of $\unifMat{1}{3}$, as well as its normal fan $\mathcal N$, pictured in \cref{fig:matroidpolytopeandbraidarrangement_new}, juxtaposed with the normal fan (dark blue), coarsening the braid fan (dark blue and petrol).
Recall from \cref{rem:mapsaspoints2} that a set composition $\opi$ contributes to the sum $\ncPsi(\unifMat{1}{3})$ if and only if the corresponding open braid cone $\BraidC_{\opi}^\circ$ is contained in the interior of  the normal cone $\normalcone_{\Pol_{\unifMat{1}{3}}}^\circ(\vertex)$ of a vertex $\vertex=\onebb_B$.
The petrol cones in \cref{fig:matroidpolytopeandbraidarrangement_new_fan} correspond to the generic set compositions with two blocks.
\end{smpl}

\begin{prop}
For every matroid $\matroid$ on a set of size $\nd$, the power series $\ncPsi(\matroid)$ is a word-quasisymmetric function of degree $\nd$.
\end{prop}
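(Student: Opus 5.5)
The plan is to verify the two defining conditions directly: homogeneity of degree $\nd$, and equality of coefficients on monomials whose indexing functions share a set composition type. Homogeneity is immediate, since every monomial $\bw_f$ with $f\colon[\nd]\to\Z_{\geq1}$ is a product of exactly $\nd$ variables. Moreover, each such monomial occurs in \eqref{eq:matroidbaseswordchromaticfunction} with coefficient either $0$ or $1$. So it remains to show that, for $f,g\colon[\nd]\to\Z_{\geq1}$ with $\opi(f)=\opi(g)$, the map $f$ is $\matroid$-generic if and only if $g$ is.

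For the key step I will use the exchange characterization recorded earlier. A map $f$ is $\matroid$-generic if and only if there is a basis $B$ such that $f(c)<f(b)$ for every $b\in B$, $c\notin B$ with $(B\setminus\{b\})\cup\{c\}\in\bases$.

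If one wants to justify this characterization rather than cite it, I would argue as follows. Suppose $B$ satisfies the strict exchange inequalities and $B'\neq B$ is another basis. The matroid base polytope is a generalized permutahedron, so its edges at the vertex $\onebb_B$ are exactly the directions $\onebb_c-\onebb_b$ given by such exchanges. Hence the linear functional $\vy_f$ strictly decreases along every edge leaving $\onebb_B$. Therefore $\onebb_B$ is the unique maximizer of $\vy_f$ on $\Pol_\matroid$, which gives $f(B')<f(B)$. The converse direction is trivial: if $B$ is the unique maximizer, every exchange strictly lowers the weight.

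The conditions $f(c)<f(b)$ depend only on the relative order of the values of $f$, that is, only on $\opi(f)$. Indeed, $f(c)<f(b)$ holds exactly when the block of $\opi(f)$ containing $c$ precedes the block containing $b$. Consequently $f$ is generic if and only if $g$ is, and hence $c_f=c_g$. Equivalently, the series equals $\sum_{\opi\ \matroid\text{-generic}}\Mnco_{\opi}$, a finite combination of monomial basis elements of $\WQSym_\nd$.

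The only step requiring care is the exchange characterization, that is, the passage from local to global optimality. Since the paper already states it as a note, I would simply cite it; the alternative is the standard greedy-algorithm argument.
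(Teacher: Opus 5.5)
Your proof is correct and is essentially the paper's argument. The paper notes that $\Pol_\matroid$ is a generalized permutahedron, so its normal fan coarsens the braid fan and genericity depends only on $\opi(f)$. You unpack this same fact concretely: you write the open normal cone at $\onebb_B$ as the region cut out by the exchange inequalities $f(c)<f(b)$, a description the paper also records in its preliminaries, and your local-to-global step rests on the same edge-direction property $\onebb_c-\onebb_b$.
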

\begin{proof}
This is equivalent to the claim that the matroid base polytopes are generalized permutahedra,
so their normal fans coarsen the braid fan. 
The cones in the braid fans are defined by set compositions. 
This implies $\matroid$-genericity of a function $f$ only depends on its set composition type.
\end{proof}

By letting the variables $\bw_1, \bw_2, \ldots $ commute on a word-quasisymmetric function, we get a quasisymmetric function (see \Cref{sec:qsym}).
Indeed, this is a linear map and the image of a monomial basis element $\Mnco_{\otau }$ is $\Mco_{\alpha(\otau)}$, which is a quasi-symmetric function, so by linearity we can define the map $\mathrm{comu}: \WQSym \to \QSym$.
It turns out that the chromatic word-quasisymmetric functions defined above are compatible with the (commutative) quasisymmetric functions of matroids.

\begin{prop}
\begin{equation*} \mathrm{comu} \circ \ncPsi = \cPsi \, .\end{equation*}
\end{prop}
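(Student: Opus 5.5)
The identity can be checked on each matroid $\matroid$ on $[\nd]$ directly from the definitions. The map $\cPsi$ is the Billera--Jia--Reiner quasisymmetric function, recalled in \cref{sec:qsym}:
\begin{equation*}
\cPsi(\matroid) = \sum_{\substack{f\colon[\nd]\to\Z_{\geq1}\\ \text{ is $\matroid$-generic}}} \bx_f, \qquad \bx_f = \prod_{i\in[\nd]} x_{f(i)}.
\end{equation*}
Here the $x_i$ are commuting variables, and the genericity condition is the same one used in \eqref{eq:matroidbaseswordchromaticfunction}.

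The map $\mathrm{comu}$ is defined on $\WQSym$ by letting the variables commute, sending $\bw_i\mapsto x_i$. On a single word this gives $\mathrm{comu}(\bw_f)=\bw_{f(1)}\cdots\bw_{f(\nd)}\mapsto x_{f(1)}\cdots x_{f(\nd)}=\bx_f$.

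I would first make sure this is well defined and linear on the relevant infinite sums. Since $\WQSym_\nd$ has the monomial basis $\Mnco_{\otau}$, it suffices to check $\mathrm{comu}(\Mnco_{\otau})=\Mco_{\alpha(\otau)}$. The words in $\Mnco_{\otau}$ are exactly the $f$ with $\opi(f)=\otau$, and these are determined by a strictly increasing choice of values $v_1<\dots<v_k$ assigned to the blocks. Commuting such a word gives $x_{v_1}^{|\tau_1|}\cdots x_{v_k}^{|\tau_k|}$. Summing over all increasing choices gives precisely the monomial quasisymmetric function $\Mco_{\alpha(\otau)}$, and each commutative monomial arises exactly once from $\Mnco_{\otau}$.

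Then I apply $\mathrm{comu}$ to the expansion $\ncPsi(\matroid)=\sum_{\opi \text{ $\matroid$-generic}}\Mnco_{\opi}$, which gives $\sum_{\opi \text{ generic}}\Mco_{\alpha(\opi)}$. Equivalently, working termwise in the formal sum, the result is $\sum_{f \text{ generic}}\bx_f$, which is $\cPsi(\matroid)$ by definition. This uses that each $f$ contributes exactly one term in both sums.

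Finally, both sides are linear in $\matroid$, so the identity extends from matroids to all of $\MatLinSpace$.

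The only potential obstacle is a normalization mismatch with the convention used in \cref{sec:qsym}. If $\cPsi$ there is defined through set compositions or compositions instead of functions, I would reconcile the two by grouping generic $f$ according to $\opi(f)$, as in the second step.
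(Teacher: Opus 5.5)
Your proposal is correct and follows the paper's proof. Both expand $\ncPsi(\matroid)$ in the monomial basis $\Mnco_{\opi}$ over $\matroid$-generic $\opi$, apply $\mathrm{comu}(\Mnco_{\opi})=\Mco_{\alpha(\opi)}$, and recognize the set-composition form of $\cPsi(\matroid)$ in \eqref{eq:defqsymmatroid}; your check of $\mathrm{comu}(\Mnco_{\otau})=\Mco_{\alpha(\otau)}$ via strictly increasing value assignments to the blocks spells out a step the paper only asserts.
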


\begin{proof}
    For every matroid $\matroid$ we have
    \begin{equation*}
    \mathrm{comu} \circ \ncPsi(\matroid) =
    \sum_{\substack{\substack{\opi\models[d] \\ \text{is $\matroid$-generic}}}} \mathrm{comu}( \Mnco_{\opi}) = 
    \sum_{\substack{\substack{\opi\models[d] \\ \text{is $\matroid$-generic}}}} \Mco_{\alpha(\opi)} =  \cPsi(\matroid)
    \end{equation*}
\end{proof}

\section{Chromatic word-quasisymmetric functions of matroids are weakly valuative}\label{sec:weaklyvaluative}

In this section we will prove the following result:

\begin{thm}\label{thm:wqsym_are_val}
The map $\ncPsi\colon\MatLinSpace\to\WQSym$ assigning every matroid its chromatic word-quasisymmetric function is weakly valuative.
\end{thm}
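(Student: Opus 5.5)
The plan is to reduce the statement to a coefficientwise claim and then verify that claim with a local Euler-characteristic computation inside a fixed open braid cone. Since $\ncPsi(\matroid)=\sum_{\opi}\Mnco_{\opi}$, summed over the $\matroid$-generic set compositions, and the $\Mnco_{\opi}$ form a basis, it suffices to prove the following. For each fixed set composition $\opi\models[\nd]$, the map $\varphi_{\opi}$ sending $\matroid$ to $1$ if $\opi$ is $\matroid$-generic and to $0$ otherwise satisfies the valuative relation \eqref{eq:valdef}. By \cref{rem:mapsaspoints2}, $\varphi_{\opi}(\Pol)=1$ exactly when, for a point $\vy$ in the open braid cone $\BraidC^\circ_{\opi}$, the face $\Pol^{\vy}$ is a vertex. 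Since $\Pol$ is a generalized permutahedron, the face $\Pol^{\vy}$ does not depend on the choice of $\vy\in\BraidC^\circ_{\opi}$. So we fix one such $\vy$ and write $\varphi_{\opi}(\Pol)=[\dim \Pol^{\vy}=0]$.

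Next, take a matroid subdivision $\{\Pol_1,\dots,\Pol_m\}$ of $\Pol=\Pol_\matroid$. Each $\Pol_I$ is either empty or a common face, hence a matroid polytope. We pass to the $\vy$-maximal faces and use the cancellation-free form of \cref{rem:subdivweakly}, which expresses the relation as $\varphi(\Pol)=\sum_{\Qol}(-1)^{\dim\Qol-\dim\Pol}\varphi(\Qol)$, summed over internal faces $\Qol$. We will instead use the following more robust route. Consider the polytopes $\Pol_I^{\vy}$. The faces $\Pol_i^{\vy}$ for those $i$ with $\Pol_i^{\vy}\subseteq\Pol^{\vy}$ form a subdivision of $\Pol^{\vy}$. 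For $i$ with $\Pol_i\cap\Pol^{\vy}=\emptyset$, the maximal face $\Pol_i^{\vy}$ is not a vertex of $\Pol^{\vy}$, but it may still be a vertex. So a direct approach fails, and the right tool is a perturbation and Euler-characteristic argument.

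The key identity to establish is that, for a generic perturbation $\vy'$ of $\vy$ (still close to $\vy$), $\varphi_{\opi}(\Qol)=[\dim\Qol^{\vy}=0]$ equals the number of vertices $v$ of $\Qol$ such that $\Qol^{\vy}=\{v\}$. Equivalently, $\varphi_{\opi}(\Qol)$ equals $\chi$ of the set $\{x\in\Qol : \vy(x)\ge c\}$ for suitable $c$ near $\max_\Qol \vy$, computed locally. More concretely, we use the indicator-function reformulation. Let $h_{\vy}(\Qol)$ be the Euler characteristic $\chi(\Qol\cap\{\vy\ge t\})$, taken in the limit as $t\uparrow\max\vy$, minus the corresponding term for a generic tangent. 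The cleanest functional to work with is
\[
\varphi_{\opi}(\Qol)=\chi(\Qol^{\vy})-\chi\bigl(\Qol^{\vy}\cap\{\vy''>\max\nolimits_{\Qol^{\vy}}\vy''\}\bigr),
\]
where $\vy''$ is a generic direction. But $\chi(\Qol^{\vy})=1$ always, so this is not the right formula either.

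Instead we will exploit the fact that being a vertex is detected by $\vy$ together with all small perturbations. We have $\dim\Qol^{\vy}=0$ exactly when $\Qol^{\vy+\epsilon z}=\Qol^{\vy}$ for all $z$ in the finite set $\{\ve_i-\ve_j\}$. Each condition $\Qol^{\vy}=\Qol^{\vy'}$ can be written as a linear combination of the evaluations $\chi(\Qol\cap H)$ for half-spaces $H$, and such evaluations are additive on indicator functions. The main obstacle is finding an expression of $[\dim \Qol^{\vy}=0]$ as a linear combination of additive functionals $\onebb[\Qol]\mapsto\chi(\Qol\cap K)$ for suitable cones or half-spaces $K$. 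Our first candidate is the inclusion–exclusion over the faces of the cone $\BraidC_{\opi}$:
\[
[\dim\Qol^{\vy}=0]=\sum_{\otau}(-1)^{\ell(\opi)-\ell(\otau)}\,[\Qol^{\vy_{\otau}}\ \text{is a vertex}],
\]
summed over refinements $\otau$ of $\opi$. This reduces everything to the Derksen–Fink statement that the map sending $\Qol$ to the vertex $\Qol^{\vy}$ for a generic $\vy$ is additive, i.e. that $\sum_{v}\onebb[\Qol\cap(v-\normalcone_\Qol(v))]$ behaves valuatively. If that fails, the fallback is to verify the internal-face relation of \cref{rem:subdivweakly} directly by a local Euler-characteristic count near the maximal face.
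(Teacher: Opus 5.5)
Your reduction is sound. Since the $\Mnco_{\opi}$ form a basis, it suffices to show that $\varphi_{\opi}(\Pol)=[\Pol^{\vy}\text{ is a vertex}]$, for a fixed $\vy\in\BraidC^\circ_{\opi}$, satisfies \eqref{eq:valdef}. You also identify the real difficulty correctly: a piece $\Pol_i$ that does not meet $\Pol^{\vy}$ can still have a vertex as its $\vy$-maximal face, so restricting the subdivision to $\Pol^{\vy}$ is not enough.

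After that, however, the proposal contains no argument. You abandon the Euler-characteristic attempts yourself, and the one concrete identity you end on is false. Take $\nd=2$, $\opi=(\{1,2\})$ and $\Qol$ a single point. The left side of your inclusion--exclusion over refinements is $1$, while the right side is $1-1-1=-1$ (or $-2$ if $\opi$ itself is excluded). More fundamentally, reducing to generic directions cannot work. For a generic direction every polytope has a unique maximizing vertex, so $[\Qol^{\vy'}\text{ is a vertex}]$ is identically $1$ and carries no information about $\opi$. The ``Derksen--Fink statement'' you invoke is not a result of that paper and would not produce the needed functional anyway. The final fallback is only a pointer, not a proof.

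The missing idea is to localize at each vertex rather than at the maximal face. Write
\[
\varphi_{\opi}(\Pol)=\sum_{B\in\bases(\matroid)}\onebb\bigl[\normalcone^\circ_{\Pol}(\onebb_B)\bigr](\vy),
\]
and note that $\normalcone^\circ_{\Pol}(\onebb_B)=\Done^\circ(\Kone_B(\matroid))$ is the open polar of the tangent cone at $\onebb_B$. Three facts about a matroid subdivision then do the work:
\begin{enumerate}
\item It creates no new vertices, since every $0/1$ point of $\Pol_\matroid$ is a vertex.
\item The tangent cone of $\Pol$ at $\onebb_B$ is the union of the tangent cones of the pieces containing $\onebb_B$.
\item Intersections of these tangent cones are the tangent cones of the corresponding $\Pol_I$.
\end{enumerate}
Inclusion--exclusion therefore gives a linear relation among indicator functions of closed convex cones. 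Lawrence's polarity theorem (\cref{prop:magic_dual}) transfers this relation to the open polars, that is, to the open normal cones. Evaluating at $\vy$ and summing over $B$ gives the valuative relation; the paper does this for all lattice points $\vy_f$ at once.

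This vertex-by-vertex bookkeeping is exactly what resolves the obstacle you found. A piece whose $\vy$-maximum is a vertex $\onebb_B$ outside $\Pol^{\vy}$ is accounted for locally at $\onebb_B$, where it cancels against the other pieces sharing that vertex.
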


For quasisymmetric functions of matroids this was already proven in \cite[Theorem 7.4]{billera2009quasisymmetric}.
For sake of completeness we include the proof for its non-commutative counterpart $\ncPsi$, which follows the same steps as the commutative case.

We recall a variation on a classical result in convex geometry from \cite{lawrence1988valuations}.
First, we define for a closed convex set $A \subseteq \R^d$, its open polar cone
\begin{equation*}
 \Done^\circ(A) \coloneqq \{ y \in \R^d | \forall_{x \in A} \langle x, y\rangle < 0  \}\, .
\end{equation*}

\begin{prop}[{\cite[Proposition 7.2]{billera2009quasisymmetric}}, {\cite[Corollary IV.1.6]{barvinok_course_2012}} and references therein]\label{prop:magic_dual}
Let $A_1, \ldots , A_k$ be closed convex sets such that 
\begin{equation*}
 \sum_i \alpha_i\onebb[A_i] = 0
\end{equation*}
for some coefficients $\alpha_i\in\R$.
Then we have
\begin{equation*}
 \sum_i \alpha_i\onebb[\Done^\circ(A_i)] = 0\, .
\end{equation*}
\end{prop}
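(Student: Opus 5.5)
We prove the statement for \emph{nonempty} closed convex sets $A_i$; this is the setting in which it is applied to matroid base polytopes. Some such hypothesis is needed: $\Done^\circ(\emptyset)=\R^d$, so the single set $A_1=\emptyset$ with $\alpha_1=1$ satisfies the hypothesis but not the conclusion. The plan is to express $\onebb[\Done^\circ(A)](y)$, pointwise in $y$, as a \emph{linear} functional of the function $\onebb[A]$, and then apply that functional to the relation $\sum_i\alpha_i\onebb[A_i]=0$.

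Fix $y\in\R^d$ and let $\Hyp_y\coloneqq\{x\colon\langle x,y\rangle\geq 0\}$ be the closed half-space. By definition, $y\in\Done^\circ(A)$ if and only if $A\cap\Hyp_y=\emptyset$. Let $\chi$ denote the Euler characteristic, which equals $1$ on every nonempty closed convex set and $0$ on $\emptyset$. For nonempty $A$ we then get
\begin{equation*}
 \onebb[\Done^\circ(A)](y)=\chi(A)-\chi(A\cap\Hyp_y)\,,
\end{equation*}
because $A\cap\Hyp_y$ is closed and convex, so the right-hand side is $1-1=0$ when $A\cap\Hyp_y\neq\emptyset$ and $1-0=1$ otherwise.

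Next we make both terms on the right linear in $\onebb[A]$. By Hadwiger's (equivalently Groemer's) extension theorem, the Euler characteristic on compact convex sets extends to a linear functional $\widehat\chi$ on the real span of their indicator functions. This is the key input and the step I expect to be the main obstacle; I would cite it rather than reprove it. If one wanted to prove it, the standard route is induction on dimension, sweeping with parallel hyperplanes and defining $\widehat\chi(g)=\sum_t\big(\widehat\chi(g|_{x_d=t})-\lim_{s\downarrow t}\widehat\chi(g|_{x_d=s})\big)$.

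Since the $A_i$ may be unbounded, we first truncate them. Choose a closed ball $B_R$ so large that, for every $i$, the set $A_i\cap B_R$ is nonempty, and $A_i\cap\Hyp_y\cap B_R$ is nonempty whenever $A_i\cap\Hyp_y$ is. This is possible because there are only finitely many sets and $y$ is fixed. Multiplication by $\onebb[B_R]$ and by $\onebb[\Hyp_y]$ are linear operations on functions, and they send indicators of closed convex sets to indicators of compact convex sets. Hence, by the choice of $R$,
\begin{equation*}
 \sum_i\alpha_i\onebb[\Done^\circ(A_i)](y)=\widehat\chi\Big(\onebb[B_R]\sum_i\alpha_i\onebb[A_i]\Big)-\widehat\chi\Big(\onebb[B_R]\,\onebb[\Hyp_y]\sum_i\alpha_i\onebb[A_i]\Big)\,.
\end{equation*}
Both terms vanish, since $\sum_i\alpha_i\onebb[A_i]=0$ by hypothesis. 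As $y$ was arbitrary, this proves $\sum_i\alpha_i\onebb[\Done^\circ(A_i)]=0$.
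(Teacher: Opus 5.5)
Your proof is correct. The paper gives no argument of its own (it only cites Billera--Jia--Reiner and Barvinok), and the standard proof behind those citations rests on the same idea as yours: evaluate the polar indicator at $y$ as an Euler characteristic of $A$ cut by a half-space determined by $y$, use linearity of $\chi$ on indicator functions, and treat your truncation by a large ball as a convenient way to need only the compact-set version of $\chi$. You are also right to require the $A_i$ to be nonempty: with the paper's definition $\Done^\circ(\emptyset)=\R^d$, so the statement as printed fails for $A_1=\emptyset$, $\alpha_1=1$.
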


For a set $B\subseteq [d]$ let $\onebb_B \in \R^d$ be the \Def{indicator vector} of $B$.
If $B$ is a base of the matroid $\matroid$, let $\Kone_B(\matroid)$ be the \Def{vertex cone} of $\Pol_\matroid$ corresponding to the vertex $\onebb_B$.
That is,
\begin{equation*}
 \Kone_B(\matroid)=\cone \{\vx - \onebb_B | \vx\in \Pol_\matroid \}\,.
\end{equation*}
It follows from $\Pol_\matroid = \cup_i \Pol_{\matroid_i}$ that 
\begin{equation*}
 \Kone_B(\matroid) =  \bigcup_{i : B \in {\matroid_i}} \Kone_B(\matroid_i)
\end{equation*}
for matroid base polytope subdivisions (see, e.g., \cite[Lemma 5.2.2]{Beck2018}).

In the following, for a collection of matroids $\{ \matroid_i \}_i $, we write $ \bigcap_i \matroid_{i}$ for the collection of bases $\{B | B \in \matroid_i \text{ for all $i$ }\} $.
Note that if this collection of matroids is part of a matroid base polytope subdivision, $ \bigcap_i \matroid_{i}$ is either empty or defines a matroid, according to the definition of polytopal subdivision (since faces of matroid base polytopes are matroid base polytopes again).

\begin{lm}
 For a matroid base polytope subdivision $\Pol_\matroid = \bigcup_i \Pol_{\matroid_i}$ and a fixed basis $B\in \matroid$ we have
\begin{equation*}
 \bigcap_{i : B \in {\matroid_i}} \Kone_B(\matroid_{i})  = \Kone_B\left(\bigcap_{i : B \in {\matroid_i}} \matroid_{i} \right)\,.
\end{equation*}
\end{lm}
\begin{proof}
Let 
\begin{equation*}
    \neighborhood_\matroid(B)\coloneqq\{C \colon C\text{ a bases of }\matroid\text{ such that } \lvert B\setminus C\rvert=1\}
\end{equation*}
denote the \Def{neighborhood} of basis $B$ in matroid $\matroid$.
Equivalently, bases $C\in \neighborhood_\matroid(B)$ correspond to all the vertices $\onebb_C$ in $\Pol_\matroid$ that share an edge with $\onebb_B$.

For a matroid base polytope $\Pol_\matroid$ and a basis $B$ we have
\begin{equation*}
 \Kone_B(M) = \cone\{ \onebb_C - \onebb_B | \text{ $C\in \neighborhood_\matroid(B)$}\} \, .
\end{equation*}
It is then a simple observation that 
\begin{equation*}
\Kone_B(M_i) = \cone\left\{ \onebb_C - \onebb_B \Big| \substack{C \in \neighborhood_\matroid(B) \\ \text{ is basis of $M_i$ }}\right\} \, ,
\end{equation*}
if $B$ is a basis in $\matroid_i$ and
\begin{equation*}
\Kone_B\left(\bigcap_{i : B \in {\matroid_i}} \matroid_{i}\right) = \cone\left\{ \onebb_C - \onebb_B \Big|\substack{ C \in N_\matroid(B) \\ \text{$C$ is basis of every $M_i$ that has $B$ as basis }}\right\} \, .
\end{equation*}
From these two equations the lemma follows.
\end{proof}

Note that $\Done^\circ(\Kone_B(\matroid))=(\normalcone_{\Pol_\matroid}^\circ(\onebb_B))$.
Indeed:
\begin{align*}
    \Done^\circ(\Kone_B(\matroid))&=\{y\in\R^d\colon \langle \onebb_C - \onebb_B, y\rangle<0 \text{ for all }C\in \neighborhood_\matroid(B)\}\\
    &=\{y\in\R^d\colon \langle \onebb_C, y\rangle <\langle \onebb_B, y\rangle \text{ for all }C\in \neighborhood_\matroid(B)\}\\
    &=\{y\in\R^d\colon \langle \onebb_C, y\rangle <\langle \onebb_B, y\rangle \text{ for all bases }C\in \mathcal B (\matroid )\setminus \{ B \}\}\\
    &=\normalcone_{\Pol_\matroid}^\circ(\onebb_B)
\end{align*}

With this, we are ready to prove the main theorem of this section.

\begin{proof}[Proof of \cref{thm:wqsym_are_val}]
Let $\matroid$ be a matroid.
Recall that for every function $f:[d] \to \Z_{\geq 1}$ such that $\opi(f) $ is $\matroid$-generic, there is a unique basis $B$ that maximizes the weight of $f$.
This is equivalent to $\vy_f \in \normalcone_{\Pol_\matroid}^\circ(\onebb_B)=\Done^\circ(\Kone_B(\matroid))$.
Therefore, we can write
\begin{equation}
\label{eq:split_ncpsi}
\begin{split}
\ncPsi(\matroid ) = \sum_{\substack{f\colon[\nd]\to \Z_{\geq1}\\ \text{ is $\matroid$-generic}}} \bw_f 
 &= \sum_{B \in \mathcal B (\matroid) }  \sum_{\substack{f\colon[\nd]\to \Z_{\geq1}\\ \text{ has unique maximum} \\ \text{in $\matroid$ at $B$}}} \bw_f \\
&= \sum_{B \in \mathcal B (\matroid) }  \sum_{\substack{f\colon[\nd]\to \Z_{\geq1}\\ \vy_f\in  \normalcone_{\Pol_\matroid}^\circ(\onebb_B)}} \bw_f = \sum_{B \in \mathcal B (\matroid) } \sum_{\vy_f \in \Done^\circ(\Kone_B(\matroid))} \bw_f \, .
\end{split}
\end{equation}

Now suppose that we have a polytope subdivision $\Pol_\matroid = \bigcup_i \Pol(\matroid_i)$.
This means that an arbitrary intersection of these matroid base polytopes is either empty or a matroid base polytope.
In this context, we can abuse notation and write $\matroid \cap \natroid$ for the matroid arising from the bases that are both in $\bases (\matroid)$ and $\bases (\natroid)$.

This corresponds to a polyhedral subdivision of the vertex cones as follows:
\begin{equation*}
\Kone_B(\matroid) =  \bigcup_{i : B \in {\matroid_i}} \Kone_B(\matroid_i)\, .
\end{equation*}

An inclusion-exclusion relation applied to this equation gives us:
\begin{equation*}\onebb[\Kone_B(\matroid)] =
\sum_j (-1)^{j-1} \sum_{\substack{i_1 < \cdots < i_j \\ B \in {\matroid_i}}} \onebb[\Kone_B(\matroid_{i_1}) \cap \cdots \cap \Kone_B(\matroid_{i_j})]\, .\end{equation*}

From \cref{prop:magic_dual}, we get that 
\begin{equation*}
\begin{split}
\onebb[\Done^\circ(\Kone_B(\matroid))] &=  \sum_{j\geq 1} (-1)^{j-1} \sum_{\substack{i_1 < \cdots < i_j \\ B \in {\matroid_i}}} \onebb\left[\Done^\circ\left(\Kone_B(\matroid_{i_1}) \cap \cdots \cap \Kone_B(\matroid_{i_j})\right)\right]\\
		    &=  \sum_{j\geq 1} (-1)^{j-1} \sum_{\substack{i_1 < \cdots < i_j \\ B \in {\matroid_i}}} \onebb\left[\Done^\circ\left(\Kone_B(\matroid_{i_1} \cap \cdots \cap \matroid_{i_j})\right)\right]
\end{split}
\end{equation*}

Summing up this equation for each base $B$, and using \cref{eq:split_ncpsi}, we get
\begin{equation*}
\ncPsi(\matroid) = \sum_{j\geq 1 } (-1)^{j-1} \sum_{\substack{i_1 < \cdots < i_j \\ B \in \Pol_{\matroid_i}}} \ncPsi(\matroid_{i_1} \cap \cdots \cap \matroid_{i_j})\, ,
\end{equation*}
as desired.
\end{proof}

\section{Rank computation}\label{sec:rank}

Recall our main conjecture:

\begin{conj:kernel}
The kernel of the map $\ncPsi$  from matroids to chromatic word-quasi\-sym\-metric functions is spanned by the valuative relations (see Equation~\eqref{eq:valdef}) and the loop-coloop relations (see \Cref{prop:loopcolooprelation} below).
In particular, as established in \Cref{thm:kernelcontains}, the function $\ncPsi$ is determined by its values on loopless nested matroids.
The image of $\ncPsi$ is a Hopf algebra whose homogeneous pieces have dimension $\nd!$.
\end{conj:kernel}

In this section we display progress towards this conjecture and split it into three parts: first we show that this collection of relations - valuative relations and loop-coloop relation - are indeed in the kernel of the $\ncPsi $ map, then we show that the map $\ncPsi_{\nd}$ has rank at least $2^{\nd } - \nd$, finally we present computational evidence and give refined conjectures.

\subsection{The upper bound}
In this section we establish the upper bound in the \cref{conj:kernelspan} by collecting results from the literature.
Specifically, we show that the image of the map $\ncPsi$ from matroids to word-quasisymmetric functions is generated by its values on loopless nested matroids, and that there are $\nd!$ many such matroids.

We showed in \cref{sec:weaklyvaluative} that the map $\ncPsi$ from matroids to word-quasisymmetric functions is weakly valuative.
Hence, the kernel of $\ncPsi$ contains the valuative relations (see Equation~\eqref{eq:valdef}).
We first show, that the kernel contains further relations of a different type.

\begin{prop}\label{prop:loopcolooprelation}
    Let $\matroid = ([d],\bases)$ be a matroid and let $b$ be a loop of $\matroid$.
    Define $\coloop{\matroid}$ to be the matroid obtained from $\matroid$ by replacing $b$ with a coloop, that is, $\coloop{\matroid}\coloneqq([d],\coloop{\bases})$ where $\coloop{\bases}=\left\{B\cup\{b\} \colon B\in\bases\right\}$.
    Then $\ncPsi(\matroid)=\ncPsi(\coloop{\matroid})$.
    In particular, applying this repeatedly, every matroid $\matroid$ with loops $b_1,\dots,b_j$ satisfies $\ncPsi(\matroid)=\ncPsi(\natroid)$, where $\natroid$ is the loopless matroid obtained by replacing all loops with coloops.
\end{prop}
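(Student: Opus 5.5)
We show that the two matroids have exactly the same generic set compositions; then the identity follows from $\ncPsi(\matroid)=\sum_{\opi\ \matroid\text{-generic}}\Mnco_{\opi}$.

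Observe that $B\mapsto B\cup\{b\}$ is a bijection $\bases\to\coloop{\bases}$. Since $b$ is a loop, no basis of $\matroid$ contains $b$. Hence for any $f\colon[\nd]\to\R$ and any $B\in\bases$,
\begin{equation*}
f(B\cup\{b\}) = f(B) + f(b).
\end{equation*}
So the weights on $\coloop{\bases}$ are the weights on $\bases$ shifted by the constant $f(b)$. This shift does not depend on the basis. Therefore $f$ has a unique maximizing basis in $\matroid$ if and only if it has a unique maximizing basis in $\coloop{\matroid}$, namely the image of the former under the bijection. In other words, the sets of $\matroid$-generic and $\coloop{\matroid}$-generic functions coincide.

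Genericity depends only on the set composition type $\opi(f)$, as noted before. So the $\matroid$-generic and $\coloop{\matroid}$-generic set compositions coincide, and the expansions of $\ncPsi(\matroid)$ and $\ncPsi(\coloop{\matroid})$ in the monomial basis are identical.

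For the final claim, we induct on the number of loops. We need that $\coloop{\matroid}$ is again a matroid and that its loops are exactly $b_2,\dots,b_j$. It is a matroid because it is the direct sum of the restriction of $\matroid$ to $[\nd]\setminus\{b\}$ with a coloop on $b$. The element $b$ is now in every basis, and the other elements lie in the same bases as before, so the loop set shrinks by exactly $b$.

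Geometrically, $\Pol_{\coloop{\matroid}}$ is the translate of $\Pol_\matroid$ by $\ve_b$, so the two polytopes have the same normal fan. No step presents a real obstacle; the only care needed is checking that the modified collection is still a matroid.
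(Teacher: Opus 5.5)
Your proof is correct and is essentially the paper's argument. The paper notes that $\Pol_{\coloop{\matroid}}$ is the translate of $\Pol_\matroid$ by $\ve_b$ and that $\ncPsi$ depends only on the normal fan; your identity $f(B\cup\{b\})=f(B)+f(b)$ is that same translation written on basis weights, and your check that $\coloop{\matroid}$ is a matroid whose loops are exactly $b_2,\dots,b_j$ supplies the iteration step the paper leaves implicit.
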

\begin{proof}
 Since $b$ is a loop in $\matroid$, it is not contained in any basis.
 Therefore the matroid base polytope of $\coloop{\matroid}$ is a translation of that of $\matroid$.
 Recall that the normal fan of a polytope is invariant under translations and that the chromatic word-quasisymmetric function only depends on the normal fan of the matroid base polytope.
\end{proof}
For a nested matroid $\matroid $, we call relations of the form
\begin{equation}\label{eq:defloopcoloop}
 \ncPsi(\matroid)-\ncPsi(\coloop{\matroid})=0
\end{equation}
\Def{loop-coloop relations}.
Note that if $\matroid$ is loopless, the corresponding loop-coloop relation is trivial.

\begin{obs}\label{obs:points}
Schubert matroids $\matroid$ corresponding to sets of the form $\{1,2,\dots,k\}$ have a unique basis,
therefore their corresponding polytopes are just points and thus all $\ncPsi(\matroid ) $ are the same.

This suggests that the relations ``$\SM(\{1,2,\dots,\nd\})-\SM(\{1,2,\dots,k\})$'' are contained in the kernel $\ker(\ncPsi)$, and indeed are particular cases of the loop-coloop relations. 
We call such relations \Def{point relations}.
\end{obs}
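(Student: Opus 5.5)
The statement is an observation with two parts: Schubert matroids $\SM(\{1,\dots,k\})$ all have the same image under $\ncPsi$, and the point relations are special cases of the loop-coloop relations of \cref{prop:loopcolooprelation}. I will prove both directly from the definitions.

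\emph{Step 1: a single basis.} For $A=\{1,\dots,k\}\subseteq[\nd]$ I will check that $\SM(A)$ has exactly one basis. If $B=\{b_1<\dots<b_k\}$ satisfies $B\leqG A$, then $b_i\le i$ for every $i$. Since the $b_i$ are distinct positive integers, induction on $i$ gives $b_i=i$. Hence $A$ is the unique basis, and $\Pol_{\SM(A)}$ is the single point $\onebb_A$.

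\emph{Step 2: the common value.} The normal fan of a point has one cone, the whole space, and it is the normal cone of that vertex. So every $f\colon[\nd]\to\Z_{\ge1}$ is $\SM(A)$-generic, because the unique basis is trivially the unique maximizer. Therefore
\begin{equation*}
\ncPsi(\SM(A))=\sum_{\opi\models[\nd]}\Mnco_{\opi},
\end{equation*}
which does not depend on $k$. This already shows that the point relations lie in $\ker\ncPsi$.

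\emph{Step 3: reduction to loop-coloop relations.} In $\SM(\{1,\dots,k\})$ the elements $k+1,\dots,\nd$ are loops. Replacing one loop, say $k+1$, by a coloop gives the matroid with the single basis $\{1,\dots,k+1\}$, which is $\SM(\{1,\dots,k+1\})$. So each relation $\SM(\{1,\dots,k+1\})-\SM(\{1,\dots,k\})$ is a loop-coloop relation in the sense of \eqref{eq:defloopcoloop}. The point relation $\SM(\{1,\dots,\nd\})-\SM(\{1,\dots,k\})$ is then a telescoping sum of these. It is also obtained in one step by replacing all loops at once, as in the last sentence of \cref{prop:loopcolooprelation}.

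The only step that needs any care is the Gale-order argument in Step 1; everything after it is immediate.
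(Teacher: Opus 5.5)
Your proposal is correct and follows the paper's reasoning: $\SM(\{1,\dots,k\})$ has the single basis $\{1,\dots,k\}$, so its base polytope is a point and every set composition is generic, and the point relations come from turning the loops $k+1,\dots,\nd$ into coloops as in \cref{prop:loopcolooprelation}. The paper leaves two steps implicit, and you spell them out: the Gale-order argument for the unique basis, and the identification of $\coloop{\matroid}$ with $\SM(\{1,\dots,k+1\})$, which gives the telescoping decomposition.
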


\Cref{prop:loopcolooprelation} and \Cref{thm:wqsym_are_val} already imply the following Corollary.
\begin{cor}\label{thm:wqsymkern}
Since $\ncPsi$ is graded, we write $\ncPsi_d$ for its degree-$d$ component.
 The kernel $\ker(\ncPsi_d)$ contains the following relations:
 \begin{enumerate}
  \item \label{thm:val} valuative relations, i.e, relations of the form presented in Equation~\eqref{eq:valdef}.
  \item \label{thm:coloop} loop-coloop relations, i.e, relations of the form presented in Equation~\eqref{eq:defloopcoloop}.
 \end{enumerate}
\end{cor}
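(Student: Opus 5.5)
The corollary follows by assembling two results already established; there is no new geometry to do.

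For item~\eqref{thm:val}, take a matroid $\matroid$ on $[\nd]$ and a polyhedral subdivision $\{\Pol_{\matroid_1},\dots,\Pol_{\matroid_m}\}$ of $\Pol_\matroid$ into matroid base polytopes. By the definition of a polyhedral subdivision, every nonempty intersection $\Pol_I=\bigcap_{i\in I}\Pol_{\matroid_i}$ is a face of a matroid base polytope. Such a face is itself a matroid base polytope on the same ground set $[\nd]$. Hence each term of the relation~\eqref{eq:valdef} is the image of a genuine element of $\Mat_\nd$. The corresponding formal combination
\begin{equation*}
\sum_{I\subseteq[m]}(-1)^{|I|}\,\matroid_I\in\Mat_\nd
\end{equation*}
is homogeneous of degree $\nd$. \Cref{thm:wqsym_are_val} says that $\ncPsi$ annihilates it, so it lies in $\ker(\ncPsi_\nd)$.

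Two routine points need checking. First, empty intersections must be dropped, or equivalently given value zero, and this has to match the convention used when proving \cref{thm:wqsym_are_val}. That proof sums only over index sets $i_1<\dots<i_j$ that share a basis $B$, so empty intersections indeed contribute nothing. Second, the identity proved there, which has the form $\ncPsi(\matroid)=\sum_{j\ge1}(-1)^{j-1}\sum\ncPsi(\matroid_{i_1}\cap\dots\cap\matroid_{i_j})$, is exactly~\eqref{eq:valdef} after regrouping by the index set $I$.

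For item~\eqref{thm:coloop}, \cref{prop:loopcolooprelation} gives $\ncPsi(\matroid)=\ncPsi(\coloop{\matroid})$ whenever $b$ is a loop of $\matroid$. Both $\matroid$ and $\coloop{\matroid}$ have ground set $[\nd]$, so $\matroid-\coloop{\matroid}$ lies in $\Mat_\nd$ and in $\ker(\ncPsi_\nd)$. Finally, $\ker(\ncPsi_\nd)$ is a linear subspace, so it contains the span of all these relations.

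The only step that needs any care is the bookkeeping in item~\eqref{thm:val}: confirming that the relations produced in the proof of \cref{thm:wqsym_are_val} are exactly those of~\eqref{eq:valdef} for the subdivisions considered. Beyond that, the corollary is immediate.
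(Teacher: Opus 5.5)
Your proposal is correct and follows the paper's argument exactly: the paper proves the corollary by noting that \cref{thm:wqsym_are_val} gives item (1) and \cref{prop:loopcolooprelation} gives item (2). Your extra bookkeeping is accurate and only makes explicit what the paper leaves implicit, namely dropping empty intersections, using that faces of matroid base polytopes are again matroid base polytopes, and regrouping the identity by index set to match~\eqref{eq:valdef}.
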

We remark that the main conjecture of this paper, \cref{conj:kernelspan}, states that the kernel $\ker(\ncPsi_d)$ is precisely the span of these relations.

We now pivot to finding explicit generators of the image of $\ncPsi_d$.
In the following we use statements from \cite{derksen2010valuative}.
On this reference, the authors use chains of subsets and rank vectors satisfying certain conditions to define megamatroids, polymatroids, and subclasses of matroids via their polyhedra.
In \cref{appendix:DCarenested} we explain the connections to nested matroids and reformulate statements by Derksen and Fink in that language.

Recall, for a polytope $\Pol\subseteq\R^d$, write $\bbone[\Pol]$ for its indicator function $\onebb[\Pol] : \R^\nd \to \{0, 1\}$ which is given by $\onebb[\Pol](\vx) = 1 $ if and only if $\vx\in \Pol$.

Let us denote the group generated by indicator functions on matroid base polytopes with rank $r$ and ground set~$[\nd]$
\begin{equation}\label{eq:defmatpolspace}
\begin{split}
 \PolSpace_{\nd, r}\coloneqq&\indicatorgroup(\{\Pol_\matroid \colon \matroid \text{ on } [\nd],\, \rk(\matroid)=r\}) \\
 =&\spn(\{\bbone[\Pol_\matroid ]\colon \matroid \text{ on } [\nd],\, \rk(\matroid)=r\})
 \end{split}
\end{equation}
using the notation of \cref{sec:preliminaries}.
We also write $\PolSpace_{\nd} = \bigoplus_r \PolSpace_{\nd, r}$ and $\PolSpace = \bigoplus_\nd \PolSpace_{\nd}$.

\begin{thm}[{\cite[Theorem 5.4]{derksen2010valuative}}]\label{thm:DF10_original}
The $\Z$-module $\PolSpace_\nd $ generated by indicator functions on matroid base polytopes with ground set $[\nd]$ is freely generated by
\begin{equation*}
\{ \bbone[\Pol_\natroid] | \text{$\natroid$ is a nested matroid on $[\nd]$} \}\,.
\end{equation*}
\end{thm}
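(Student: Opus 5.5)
The statement is Derksen--Fink's Theorem 5.4, so the task is to reconstruct a proof of it. The plan has two halves: show that the indicator functions of nested matroid polytopes span $\PolSpace_\nd$, and show that they are linearly independent.

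\emph{Spanning.} Fix a rank $r$ and work in $\PolSpace_{\nd,r}$. I would use the rank-function description of $\Pol_\matroid$. By \cref{obs:rkpol},
\[
\Pol_\matroid=\Big\{\vx\in[0,1]^\nd\colon \textstyle\sum_i \vx_i=r,\ \sum_{i\in A}\vx_i\le \rk_\matroid(A)\ \forall A\Big\}.
\]
One may restrict to $A$ ranging over cyclic flats, or more generally over any lattice of subsets.

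The idea is to write $\onebb[\Pol_\matroid]$ as an alternating sum over chains of subsets $\emptyset\subsetneq X_1\subsetneq\dots\subsetneq X_k=[\nd]$. Each term is the indicator of a polytope cut out by inequalities along a single chain, namely a double-chain polytope $\Pol(\uX,\ur)$ as in \eqref{eq:defpolofdoublechain}. Concretely, I would follow Derksen--Fink and first pass to the larger group of megamatroid (integer generalized permutahedron) indicator functions. There the indicator of a cone defined by a submodular function decomposes by Möbius inversion over the lattice of subsets into chain-supported pieces; these are the ``$\ur$-cones''. Then I would intersect with the hypersimplex and the rank hyperplane. Chains whose rank data violate conditions \ref{it:rankinequalities} or \ref{it:DcRanktogether} yield polytopes that are either empty or coincide with a polytope from a shorter chain. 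Hence only matroidal double chains survive.

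By \cref{thm:doublechains_are_matroids}, \cref{thm:doublechains_are_nestedmatroids}, and the one-to-one correspondence of \cref{appendix:DCarenested}, these are exactly the nested matroids on $[\nd]$. This gives spanning over $\Z$. The main obstacle is the bookkeeping of degenerate chains: I must verify that after truncation by $[0,1]^\nd$, every chain either contributes a nested matroid polytope or cancels. I would handle this by induction on chain length, merging consecutive steps where the strict inequalities fail.

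\emph{Independence.} Here I would use a triangularity argument. For a nested matroid $\natroid=\matroid(\uX,\ur)$ choose a witness point $\vx_\natroid$, for instance the barycenter of a suitable face, or the vertex $\onebb_B$ for the Gale-maximal basis after relabeling. The witness should lie in $\Pol_\natroid$ but not in $\Pol_{\natroid'}$ for any nested $\natroid'$ that is smaller in a suitable order. A natural candidate for the order is refinement of double chains, or inclusion of polytopes, with ties broken by the number of tight constraints.

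Evaluating a vanishing relation $\sum c_\natroid\onebb[\Pol_\natroid]=0$ at $\vx_\natroid$ for a minimal $\natroid$ with $c_\natroid\neq0$ then gives a contradiction. An alternative route, if picking witnesses fails, is a dimension count. I would exhibit a valuative invariant that separates the nested matroids, for instance Derksen's $\mathcal G$-invariant. I would then show that its values on nested matroids are independent, since their chain data are distinct. Summing over ranks and using $\PolSpace_\nd=\bigoplus_r\PolSpace_{\nd,r}$ yields freeness.
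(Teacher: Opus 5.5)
The paper gives no proof of this statement; it only quotes Derksen--Fink. Your spanning half is therefore best read as a reconstruction of their argument, and as an outline it holds up. For matroid rank data the truncated chain polytope contains $\Pol_\matroid$, so it is never empty. A step with $r_j=r_{j+1}$, or with $|X_j|-r_j=|X_{j+1}|-r_{j+1}$ and $j+1<k$, or with $r_1=|X_1|$, gives a redundant inequality that can be dropped. Equality at the last step, by contrast, is a genuine coloop constraint and must stay.

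The genuine gap is independence: no triangular scheme of point evaluations exists, whatever order and witnesses you pick. Take $\nd=4$, $r=2$, the uniform matroid with polytope $\hypersimplex{2}{4}$, and the six nested matroids $\matroid((\{i,j\},[4]),(1,2))$ with polytopes $\Qol_{ij}=\{\vx\in\hypersimplex{2}{4}\colon \vx_i+\vx_j\le 1\}$. For each of the three pairings $\{i,j\}\sqcup\{k,l\}=[4]$ one has $(\vx_i+\vx_j)+(\vx_k+\vx_l)=2$. So every point of $\hypersimplex{2}{4}$, its barycenter included, lies in at least three of the $\Qol_{ij}$. Hence every point of $\R^4$ lies in none or in at least four of these seven polytopes, and whichever of them must be witnessed against the other six has no admissible witness. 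Your quantifiers are also reversed: at a minimal $\natroid$ with $c_\natroid\neq0$, the terms that must vanish at $\vx_\natroid$ are the larger and incomparable ones, not the smaller ones. The fallback fails as well. Derksen's $\mathcal G$ is an isomorphism invariant, so it agrees on, e.g., the four isomorphic triangles $\matroid((\{i\},[4]),(0,2))$. Moreover, distinctness of chain data is not a linear-independence argument.

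The missing idea is to test against valuative functionals attached to chains rather than against points; this is what Derksen--Fink's dual basis does. For $\natroid=\matroid(\uX,\ur)$ let $s_\natroid(\matroid)=1$ if $\rk_\matroid(X_j)=r_j$ for all $j$, and $0$ otherwise.
\begin{itemize}
\item Take $\vy=\sum_j a_j\onebb_{X_j}$ with $a_j>0$ linearly independent over $\mathbb{Q}$. The face $\Pol_\matroid^{\vy}$ maximizes every $\onebb_{X_j}$, because the closed braid cone containing $\vy$ in its relative interior lies in a single normal cone of $\Pol_\matroid$.
\item By \cref{obs:rkpol}, $s_\natroid(\matroid)=1$ exactly when $\max_{\vx\in\Pol_\matroid}\vy(\vx)=\sum_j a_jr_j$. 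This is a difference of two values of $f\mapsto\chi\bigl(f\cdot\onebb[\{\vy\ge c\}]\bigr)$, with $\chi$ the Euler characteristic, hence a linear functional on $\PolSpace_\nd$.
\item $s_\natroid(\natroid)=1$ by \cref{cor:rankofXj}. Conversely, $s_\natroid(\natroid')=1$ forces $\Pol_{\natroid'}\subseteq\Pol(\uX,\ur)=\Pol_\natroid$ by \cref{obs:rkpol} and \eqref{eq:defpolofdoublechain}.
\end{itemize}
Now apply $s_\natroid$ to a relation $\sum c_{\natroid'}\onebb[\Pol_{\natroid'}]=0$, choosing $\Pol_\natroid$ inclusion-minimal among the terms with $c_{\natroid'}\neq0$; only $c_\natroid$ survives, so $c_\natroid=0$. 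This is the inclusion-triangularity you were after, realized by chain functionals instead of points, and it needs neither the M\"obius formula nor any counting.
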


We will use the following weaker version that deals with the objects as vector spaces instead of $\Z$-modules.
The $\Z$-module statement implies the $\R$-vector space statement below by tensoring with $\R$.
For convenience we use the same notation for vector spaces and the underlying additive group.

\begin{thm}[reformulation of {\cite[Theorem 5.4]{derksen2010valuative}}]\label{thm:DF5.4_refomulated}
The real vector space $\PolSpace_\nd $ generated by indicator functions of matroid base polytopes is spanned by
\begin{equation*}
\{\bbone[\Pol_\natroid]\ |\ \text{$\natroid$ is a nested matroid on $[\nd]$} \}\,.
\end{equation*}
\end{thm}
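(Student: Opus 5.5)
The plan is to deduce the statement directly from \cref{thm:DF10_original} by extension of scalars. First I make precise what the real vector space $\PolSpace_\nd$ is. By definition it is the $\R$-span of the functions $\bbone[\Pol_\matroid]\colon \R^\nd\to\R$, where $\matroid$ ranges over matroids on $[\nd]$. It sits inside the real vector space of all functions $\R^\nd\to\R$. The $\Z$-module from \cref{thm:DF10_original}, call it $\PolSpace_\nd^\Z$, is the subgroup of integer-valued functions generated by the same indicators.

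The key step is a simple spanning argument. Take any element $\sum_\matroid c_\matroid \bbone[\Pol_\matroid]$ of the real span, with $c_\matroid\in\R$. By \cref{thm:DF10_original}, each single indicator $\bbone[\Pol_\matroid]$ lies in $\PolSpace_\nd^\Z$. It can therefore be written as $\sum_\natroid a_{\matroid,\natroid}\bbone[\Pol_\natroid]$ with integers $a_{\matroid,\natroid}$ and $\natroid$ ranging over nested matroids on $[\nd]$. Substituting gives $\sum_\natroid\big(\sum_\matroid c_\matroid a_{\matroid,\natroid}\big)\bbone[\Pol_\natroid]$. This shows the nested indicators span $\PolSpace_\nd$ over $\R$, which is exactly the claimed statement.

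As an optional remark, one can also get linear independence over $\R$ by checking that $\PolSpace_\nd^\Z\otimes_\Z\R\to\PolSpace_\nd$ is injective. The argument goes as follows:
\begin{itemize}
\item Any finitely many integer-valued functions that are $\Z$-linearly independent are also $\Q$-linearly independent, by clearing denominators.
\item A real linear relation among functions with rational values can be converted into a rational one. Its coefficients solve a rational linear system, obtained by evaluating at finitely many points, so if a real solution exists, a rational one does too.
\end{itemize}
This part is not needed for the spanning claim, which is all the statement asserts.

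I expect no real obstacle. The only point needing care is the bookkeeping identification above: the real span really is the same object as $\PolSpace_\nd^\Z$ with its scalars extended, and the free generators in \cref{thm:DF10_original} are exactly the indicators of nested matroids on $[\nd]$, as the same set of functions.
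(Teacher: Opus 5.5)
Your proposal is correct and follows the same route as the paper, which notes that the real vector-space statement follows from the $\Z$-module statement of \cite[Theorem 5.4]{derksen2010valuative} by tensoring with $\R$. Substituting the integer expansions of each $\bbone[\Pol_\matroid]$ into an arbitrary real combination is exactly the spanning half of that extension of scalars, written out explicitly. Your optional linear-independence remark is sound too, but as you say it is not needed for the claim.
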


We would like to apply \Cref{thm:DF5.4_refomulated} to our (weakly valuative) map $\ncPsi\colon \Mat \to \WQSym$.
We achieve this by the notion of strong valuativity, using the following general result on valuative properties by Derksen and Fink.

\begin{thm}[{\cite[Corollary~$3.9$]{derksen2010valuative}}]\label{thm:DFweakvaluativeisstrong}
Let $A$ be an abelian group and let $ \Mat (\nd,r)$ denote the free abelian group generated by the set of matroids on the ground set $[d]$ with rank $r$.
Then, a homomorphism $\varphi\colon \Mat (\nd, r) \to A$ is weakly valuative if and only if it is strongly valuative.
\end{thm}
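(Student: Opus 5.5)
The implication ``strongly $\Rightarrow$ weakly'' is immediate and I would prove it first. Let $\{\Pol_1,\dots,\Pol_m\}$ be a subdivision of $\Pol=\Pol_\matroid$ into matroid base polytopes. For a point $\vx\in\Pol$, let $J_\vx=\{i\colon \vx\in\Pol_i\}$; this set is nonempty because the $\Pol_i$ cover $\Pol$. Then
\begin{equation*}
\sum_{I\subseteq[m]}(-1)^{|I|}\onebb[\Pol_I](\vx)=\sum_{I\subseteq J_\vx}(-1)^{|I|}=0 .
\end{equation*}
For $\vx\notin\Pol$ every term vanishes. So the valuative alternating sum is already zero in $\PolSpace_{\nd,r}$, and applying the additive map $\widehat\varphi$ gives Equation~\eqref{eq:valdef}.

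For ``weakly $\Rightarrow$ strongly'', I would define $\widehat\varphi$ using \cref{thm:DF10_original}, which is the $\Z$-module version. Since the indicators $\onebb[\Pol_\natroid]$ of nested matroids of rank $r$ freely generate $\PolSpace_{\nd,r}$, I can set $\widehat\varphi(\onebb[\Pol_\natroid])\coloneqq\varphi(\natroid)$ and extend additively. This is automatically well defined. What remains is the identity $\varphi(\matroid)=\widehat\varphi(\onebb[\Pol_\matroid])$ for every matroid $\matroid$. Equivalently: if $\onebb[\Pol_\matroid]=\sum_\natroid c_\natroid\onebb[\Pol_\natroid]$, then $\varphi(\matroid)=\sum_\natroid c_\natroid\varphi(\natroid)$. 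Put differently, I must show that the kernel of $\Mat(\nd,r)\to\PolSpace_{\nd,r}$, $\matroid\mapsto\onebb[\Pol_\matroid]$, is generated by valuative relations modulo the fact that nested matroids span.

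My plan is to produce, for every $\matroid$, an explicit reduction of $\matroid$ to nested matroids that uses only honest subdivisions. The basic move is a hyperplane split. Take $F\subseteq[\nd]$ and an integer $k$ with $0<k<\rk_\matroid(F)$ such that the hyperplane $\{\sum_{i\in F}\vx_i=k\}$ meets the interior of $\Pol_\matroid$. Then $\Pol_\matroid\cap\{\sum_F\vx_i\le k\}$, $\Pol_\matroid\cap\{\sum_F\vx_i\ge k\}$ and their intersection are again matroid base polytopes: the first corresponds to lowering the rank of $F$, the second to the dual operation, and the third is a direct-sum-type product $\matroid|_F$-truncation $\oplus$ contraction. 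The two pieces form a subdivision, so weak valuativity gives
\begin{equation*}
\varphi(\Pol)=\varphi(\Pol^{\le})+\varphi(\Pol^{\ge})-\varphi(\Pol^{=}).
\end{equation*}
The identity $\onebb[\Pol]=\onebb[\Pol^{\le}]+\onebb[\Pol^{\ge}]-\onebb[\Pol^{=}]$ holds in $\PolSpace$ as well, so each such move is compatible with $\widehat\varphi$.

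I would then follow Derksen--Fink's derivation of the nested basis. Their expansion of $\onebb[\Pol_\matroid]$ as an alternating sum over chains of subsets, with polytopes $\Pol(\uX,\ur)$ of the form~\eqref{eq:defpolofdoublechain}, is obtained by iterated cutting along hyperplanes $\sum_{X}\vx_i=r_X$. I would induct on a measure such as the number of pairs of incomparable cyclic flats, or the number of non-chain rank constraints. At each step I pick an incomparable cyclic flat $F$ and split along $\sum_F\vx_i$; each resulting piece has strictly fewer non-nested constraints, and the process stops exactly at double-chain (nested) matroids. Since every step is a weak valuative identity, $\varphi(\matroid)$ and $\widehat\varphi(\onebb[\Pol_\matroid])$ obey the same recursion and agree on the nested leaves, hence they agree.

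The main obstacle is this reduction step. Two things must be checked: that a suitable splitting hyperplane always exists for a non-nested matroid, and that every piece has strictly smaller complexity. Degenerate cases where some piece is lower-dimensional also need care; there I would apply the split inside the affine hull and use faces, which are again matroid base polytopes. If a direct induction is awkward, my first fallback is to work with the fine subdivision of $\Pol_\matroid$ induced by all hyperplanes $\sum_X\vx_i=k$ at once. I would apply the cancellation-free internal-face formula of \cref{rem:subdivweakly} to it and note that every cell is a nested-type polytope $\Pol(\uX,\ur)$, or a face of one.
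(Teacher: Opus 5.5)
The paper gives no proof of this statement; it is quoted from \cite[Corollary 3.9]{derksen2010valuative}. Your direction ``strongly $\Rightarrow$ weakly'' is correct, and defining $\widehat\varphi$ on the free basis of \cref{thm:DF10_original} is fine. Your argument for ``weakly $\Rightarrow$ strongly'', however, has a genuine gap, and it is not a technicality.

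First, your basic move is false. A hyperplane $\sum_{i\in F}\vx_i=k$ can cut a square $2$-face of $\Pol_\matroid$ along a diagonal, and then $\Pol^{\le}$, $\Pol^{\ge}$ and $\Pol^{=}$ are not matroid base polytopes. Take $\matroid=\unifMat{1}{2}\oplus\unifMat{1}{2}$ on $\{1,2\}\sqcup\{3,4\}$, $F=\{1,3\}$ and $k=1$. Then $\Pol^{\le}=\conv\{\ve_1+\ve_4,\ve_2+\ve_3,\ve_2+\ve_4\}$ has an edge in direction $\ve_1+\ve_4-\ve_2-\ve_3$. Requiring $F$ to be an incomparable cyclic flat does not help. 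In the rank-$3$ matroid on $[6]$ whose only nontrivial lines are $\{1,2,6\},\{3,4,6\},\{2,4,5\}$, the cut $\vx_2+\vx_4+\vx_5\le 1$ along the cyclic flat $\{2,4,5\}$ slices the square face $\{\vx_6=1,\vx_5=0\}$ diagonally.

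Second, and more fundamentally, no top-down recursion can terminate at nested matroids. The matroid $\unifMat{1}{2}\oplus\unifMat{1}{2}$ is not nested: its cyclic flats $\{1,2\}$ and $\{3,4\}$ are incomparable and both have rank $1$, so your step cannot even start. Its base polytope, a square, admits no nontrivial subdivision into matroid base polytopes at all. Its nested expansion arises only when the square is the \emph{wall} of a subdivision of a larger polytope. Namely, $\hypersimplex{2}{4}=\Pol_{\natroid_1}\cup\Pol_{\natroid_2}$ with $\natroid_1=\matroid((\{1,2\},[4]),(1,2))$ and $\natroid_2=\matroid((\{3,4\},[4]),(1,2))$, which gives
\begin{equation*}
\varphi(\unifMat{1}{2}\oplus\unifMat{1}{2})=\varphi(\natroid_1)+\varphi(\natroid_2)-\varphi(\unifMat{2}{4})\,.
\end{equation*}
The wall $\Pol^{=}$ of a matroid split is always the base polytope of a direct sum. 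Here the wall is non-nested while the polytope being cut is uniform. So your claim that every piece, including $\Pol^{=}$, strictly decreases your complexity measure is false, and the recursion does not close.

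Your fallback fails as well. The cells of the arrangement of all hyperplanes $\sum_{i\in X}\vx_i=k$ are in general not matroid base polytopes. In $\hypersimplex{2}{4}$, the three planes $\vx_1+\vx_j=1$ for $j=2,3,4$ all pass through the centroid $(\tfrac12,\tfrac12,\tfrac12,\tfrac12)$, which becomes a vertex of every cell. These cells are not $0/1$-polytopes, so they are not of the form $\Pol(\uX,\ur)$, which are matroid base polytopes by \cref{thm:doublechains_are_matroids}. Moreover, $\varphi$ is not defined on them, and \cref{rem:subdivweakly} cannot be applied.

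What is actually missing is an argument that every $\Z$-linear relation among the indicator functions $\onebb[\Pol_\matroid]$ with fixed $(\nd,r)$ is generated by matroid subdivision relations. In such relations a given matroid may appear as the subdivided polytope, as a maximal cell, or as an interior intersection. You would also need an ordering that makes the resulting induction well founded. That is precisely the content of Derksen--Fink's corollary, which the paper cites rather than reproves.
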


It is worth noting that $\WQSym$ can be seen as an Abelian group by forgetting their Hopf algebra structure.
Recall that we denote $\Mat$ for the Hopf algebra of matroids.

\begin{cor}\label{cor:ncPsistronglyvaluative}
The map $\ncPsi\colon \Mat\to\WQSym$ from matroids to word-quasisymmetric functions is strongly valuative.
\end{cor}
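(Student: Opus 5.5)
This follows by combining \cref{thm:wqsym_are_val} with \cref{thm:DFweakvaluativeisstrong}, applied one graded piece and one rank at a time. Fix $\nd$ and $r$, and let $\ncPsi_{\nd,r}$ be the restriction of $\ncPsi$ to the free abelian group $\Mat(\nd,r)$ generated by matroids on $[\nd]$ of rank $r$. Regard $\WQSym$ only as an abelian group; it is an $\R$-vector space, hence an abelian group $A$. Then $\ncPsi_{\nd,r}\colon \Mat(\nd,r)\to A$ is a group homomorphism, obtained by extending the assignment $\matroid\mapsto\ncPsi(\matroid)$ $\Z$-linearly.

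\textbf{Step 1: each restriction is weakly valuative.} Any polyhedral subdivision of a matroid base polytope $\Pol_\matroid$ into matroid base polytopes involves only polytopes in the same hyperplane $\sum_i \vx_i = r$. So all matroids appearing in a valuative relation~\eqref{eq:valdef} have the same ground set $[\nd]$ and the same rank $r$. The same holds for the nonempty intersections $\Pol_I$, which are faces and therefore matroid base polytopes of rank $r$. Empty intersections contribute nothing, either by convention or because they simply do not appear. Hence \cref{thm:wqsym_are_val} says exactly that each $\ncPsi_{\nd,r}$ satisfies every valuative relation, i.e.\ it is weakly valuative.

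\textbf{Step 2: upgrade to strong valuativity.} Apply \cref{thm:DFweakvaluativeisstrong} to get an additive map
\[
\widehat{\ncPsi}_{\nd,r}\colon \PolSpace_{\nd,r}\to\WQSym
\quad\text{with}\quad
\widehat{\ncPsi}_{\nd,r}(\onebb[\Pol_\matroid])=\ncPsi(\matroid).
\]

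\textbf{Step 3: assemble the pieces.} Define $\widehat{\ncPsi}=\bigoplus_{\nd,r}\widehat{\ncPsi}_{\nd,r}$ on $\PolSpace=\bigoplus_{\nd,r}\PolSpace_{\nd,r}$. This is well defined once we know the sum is direct. Indicator functions for different $\nd$ live on different ambient spaces, and we treat $\PolSpace$ as graded by $\nd$ by definition. Within a fixed $\nd$, indicator functions supported on distinct parallel hyperplanes $\sum_i\vx_i=r$ are linearly independent as functions. So $\widehat{\ncPsi}$ is an additive map on the indicator group of all matroid base polytopes, and it factors $\ncPsi$ through $\onebb$. This is the definition of strongly valuative.

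The only point requiring care is the bookkeeping in Step 1: confirming that the relations demanded in \cref{thm:DFweakvaluativeisstrong}, stated for fixed $(\nd,r)$, are precisely those proved in \cref{thm:wqsym_are_val}. Equivalently, every subdivision stays within one rank and one ground set. I do not expect any real obstacle here.
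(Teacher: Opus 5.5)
Your proposal is correct and follows the paper's proof essentially step for step. You restrict $\ncPsi$ to each piece $\Mat(\nd,r)$, use \cref{thm:wqsym_are_val} to get weak valuativity of each piece, upgrade via \cref{thm:DFweakvaluativeisstrong}, and glue the maps $\widehat{\ncPsi}_{\nd,r}$ using the direct sum $\PolSpace=\bigoplus_{\nd,r}\PolSpace_{\nd,r}$, which comes from distinct ambient spaces and parallel hyperplanes. Your Step 1 is slightly more careful than the paper, which asserts without comment that each restriction inherits weak valuativity; your observation that every subdivision, and every nonempty intersection of its pieces, stays within a fixed ground set and rank is exactly what justifies that claim.
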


\begin{proof}
 Recall that we showed in \Cref{thm:wqsym_are_val} that the map $\ncPsi\colon\MatLinSpace\to\WQSym$ is weakly valuative.
    Furthermore, recall that the Hopf algebra $\Mat$ as an Abelian group is the direct sum of the graded pieces $\Mat (d,r)$ as defined in \Cref{thm:DFweakvaluativeisstrong}.

    Therefore, a group homomorphism $\ncPsi: \Mat \to \WQSym$ can be decomposed into its pieces $\ncPsi_{\nd, r}: \Mat(\nd, r) \to \WQSym$.
    This is in such a way that $\ncPsi_{\nd, r}(\Pol_\matroid) = \ncPsi(\Pol_\matroid)$ for every matroid $\matroid $ with rank $r$ and ground set $[\nd]$.

    Since $\ncPsi$ is weakly valuative, each piece $\ncPsi_{\nd, r}$ is weakly valuative.
     \Cref{thm:DFweakvaluativeisstrong} implies that each $\ncPsi_{\nd,r}$ is strongly valuative, that is, there exists $\widehat{\ncPsi}_{\nd, r}: \PolSpace_{\nd, r} \to \WQSym $ such that $\ncPsi_{\nd,r}(\matroid) = \widehat{\ncPsi}_{\nd, r} \circ \onebb[\Pol_\matroid]$.

    Now we observe that $\PolSpace$ is indeed the direct sum $\bigoplus_{\nd, r} \PolSpace_{\nd, r}$ (see Equation~\eqref{eq:defmatpolspace}).
    Recall that matroid base polytopes of matroids on different ground sets live in different vector spaces and matroid base polytopes are of codimension one, specifically they are contained in the hyperplane defined by coordinate sum equal to the rank of the matroid.
    Hence, matroid base polytopes of matroids on the same ground set but with different ranks are contained in different hyperplanes.
    It follows that the functions generated by a graded piece $\PolSpace_{d,r}$ only ever take non-zero values for points in the hyperplane with coordinate sum $r$ contained in $\R^d$.
    Therefore, elements contained in different graded pieces will always be independent.

    With this we construct a map $\widehat{\ncPsi}: \PolSpace\to \WQSym $, as the  unique direct sum
    \begin{equation*}
     \widehat{\ncPsi}\coloneqq\oplus_{d,r}\widehat{\ncPsi}_{d,r}\,,
    \end{equation*}
    since $\PolSpace$ is a direct sum $\bigoplus_{\nd, r} \PolSpace_{\nd, r}$.
    This function satisfies
    \begin{equation}\label{eq:strongvaluationncPsi}
     \ncPsi(\matroid) = \widehat{\ncPsi} \circ \onebb[\Pol_\matroid]
    \end{equation}
    for every matroid, as desired.
\end{proof}

The factorization of the map $\widehat{\ncPsi}: \PolSpace \to \WQSym$ in the proof of \cref{cor:ncPsistronglyvaluative}  satisfying $ \ncPsi(\matroid ) = \widehat{\ncPsi}(\onebb{[{\Pol_\matroid}]})$ for every matroid $\matroid $
can be  summarized in the following commutative diagram:
\begin{equation*}
  \begin{tikzcd}
   \Mat \arrow[rr, "\ncPsi"] \arrow[dr, "\onebb{[\Pol_{(\cdot)}]}"'] & & \WQSym \\
    & \PolSpace \arrow[ur, "\widehat{\ncPsi}"'] &
\end{tikzcd}\,.
\end{equation*}

\begin{cor}\label{cor:nestedmatroidsgeneratetheimage}
The $d^{\text{th}}$ graded piece of the image of the map $ \ncPsi\colon \MatLinSpace\to\WQSym$ is generated by
\begin{equation*}
(\im \ncPsi)_d = \{\ncPsi(\natroid) \ |\ \text{$\natroid$ is a loopless nested matroid on }[d]\} \subseteq\WQSym_d\, .
\end{equation*}
Moreover, we have that $\dim((\im\ncPsi)_d)\leq d!$.
\end{cor}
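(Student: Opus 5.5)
The plan is to factor $\ncPsi$ through the indicator group, reduce to nested matroids, and then count. By \Cref{cor:ncPsistronglyvaluative}, $\ncPsi = \widehat{\ncPsi}\circ \onebb[\Pol_{(\cdot)}]$, with $\widehat{\ncPsi}\colon \PolSpace\to\WQSym$ linear. Hence $(\im\ncPsi)_d = \widehat{\ncPsi}(\PolSpace_d)$. By \Cref{thm:DF5.4_refomulated}, $\PolSpace_d$ is spanned by the indicator functions $\onebb[\Pol_\natroid]$ of nested matroids $\natroid$ on $[d]$. It follows that $(\im\ncPsi)_d$ is spanned by $\{\ncPsi(\natroid) : \natroid \text{ nested on } [d]\}$.

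Next I remove loops using \Cref{prop:loopcolooprelation}. If $\natroid$ is nested with loops $b_1,\dots,b_j$, then $\ncPsi(\natroid)=\ncPsi(\natroid')$, where $\natroid'$ is obtained by turning every loop into a coloop. I must check that $\natroid'$ is again nested and loopless. Its matroid base polytope is a translate of $\Pol_\natroid$, and this is easiest to see via PI-extensions. A loop corresponds to an element that is in no basis. Replacing it by a coloop amounts, in the PI-extension sequence, to performing an isthmus addition instead of a free extension at the stage where the rank was still $0$ (free extensions of rank-$0$ matroids add loops). This keeps the matroid within the PI-extension class.

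Alternatively, one can argue with cyclic flats. The loops form the rank-$0$ cyclic flat $F_0$. Replacing them by coloops yields a matroid whose cyclic flats are those of $\natroid$ with $F_0$ removed from each. This is still a chain, so $\natroid'$ is nested, and it has no loops. By \Cref{prop:looplessnestedmatroid}, $\natroid'$ is loopless nested, which gives the stated generating set.

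For the dimension bound I count loopless nested matroids on $[d]$. A nested matroid is given faithfully by a matroidal double chain $(\uX,\ur)$, and the loopless ones are those with $r_1>0$ (\Cref{prop:looplessnestedmatroid}). I would prove the count $d!$ by induction on $d$ via PI-extensions starting from $\unifMat{1}{1}$. Pass from a loopless nested matroid on $[d-1]$ to one on $[d]$ by deciding how the new element $d$ enters the chain. The possibilities are: $d$ joins one of the sets $X_j$ at its first appearance in the chain, either as a pure enlargement or by inserting a new level, with the corresponding rank adjustments. The goal is to exhibit exactly $d$ choices compatible with conditions \ref{it:DCsetinclusion}--\ref{it:DcRanktogether} and to show that each loopless nested matroid on $[d]$ arises from a unique matroid on $[d-1]$ by deleting $d$, whether $d$ is a coloop or not.

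The main obstacle is this bijective count. The deletion of $d$ must stay loopless and nested; deletion preserves nestedness, but a coloop deletion needs separate care. Beyond that, the number of valid insertions must be exactly $d$. If this gets messy, I would instead enumerate directly. Encode a loopless double chain of rank $r$ as a sequence of nested sets with strictly increasing $r_j$ and strictly increasing nullities $|X_j|-r_j$. Then match these sequences with permutations of $[d]$, for instance by reading elements in blocks $X_j\setminus X_{j-1}$ and recording ranks and nullities through descents, and verify the count against \cref{tab:loopless_SM_d2d3} for small $d$. Either way, the spanning set has at most $d!$ elements, so $\dim((\im\ncPsi)_d)\le d!$.
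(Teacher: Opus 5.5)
\textbf{Overall.} The first half of your argument matches the paper; the counting half has a genuine gap.

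\textbf{Reduction to loopless nested matroids.} Your first two steps are the paper's own: factor $\ncPsi$ through $\widehat{\ncPsi}$, reduce to nested matroids via \cref{thm:DF5.4_refomulated}, then apply \cref{prop:loopcolooprelation}.

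Your cyclic-flat check that turning the loop set $L$ into coloops gives a loopless nested matroid is correct. The cyclic flats of $\natroid|_{[d]\setminus L}$ plus loops on $L$ are the sets $F\cup L$, and with coloops on $L$ they are just the $F$. This makes explicit a point the paper leaves implicit.

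Your PI-extension version is wrong as stated. Replacing the rank-$0$ free extensions by isthmus additions \emph{at the same stage} changes the later free extensions. For example, loop $a$, isthmus $b$, free extension $c$ gives bases $\{b\},\{c\}$, whose loop-to-coloop version has bases $\{a,b\},\{a,c\}$. But isthmus $a$, isthmus $b$, free extension $c$ gives $\unifMat{2}{3}$. The isthmuses have to be appended at the end of the sequence instead.

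\textbf{The gap: the count.} The real gap is the count of loopless nested matroids on $[d]$. This count is the entire content of $\dim((\im\ncPsi)_d)\le d!$, and you leave it as a plan.

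The induction you propose cannot work as stated. Deleting $d$ is a well-defined map from loopless nested matroids on $[d]$ to those on $[d-1]$, but its fibres do not all have size $d$. Take $d=3$:
\begin{itemize}
\item $\unifMat{1}{2}$ on $\{1,2\}$ has only two preimages: $\unifMat{1}{3}$, and $\unifMat{1}{2}$ with $3$ added as a coloop.
\item $\unifMat{2}{2}$ has four: $\unifMat{2}{3}$, $\unifMat{3}{3}$, and the two matroids where $3$ is parallel to $1$ or to $2$ and the remaining element is a coloop.
\end{itemize}
So the number of extensions depends on the matroid (here, on its rank), not just on $d$. Checking the count against small tables is not a proof.

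\textbf{How the paper closes it.} The paper cites \cite[Theorem 4.5]{hampe_intersection_2017}: loopless nested matroids of rank $r$ on $[d]$ are counted by the Eulerian number $\Eulerian_{r-1,\nd}$, and these sum to $d!$. It also reproves the total in \cref{prop:hampe_dfact}:
\begin{itemize}
\item Set $Y_i=X_i\setminus X_{i-1}$, which form a set composition of $[d]$, and $h_i=r_i-r_{i-1}$.
\item The matroidal double-chain conditions together with $r_1>0$ become independent constraints on each block.
\item One end block admits $|Y|$ values of $h$, and every other block admits $|Y|-1$.
\item Hence the exponential generating function is $1+xe^x\sum_{k\ge 0}\bigl(e^x(x-1)+1\bigr)^k=\frac{1}{1-x}$.
\end{itemize}
To complete your proof you need one of these, or an explicit bijection with permutations.
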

\begin{proof}
Indeed, from \Cref{thm:DF5.4_refomulated}, for every matroid $\matroid $ of rank $r$ on ground set $[\nd]$ there is a decomposition in $\PolSpace_{\nd,r}$ into nested matroids, where $\alpha_j $ are coefficients in $\R$:
\begin{equation*}\onebb[\Pol_\matroid] = \sum_j \alpha_j \onebb[\Pol_{\natroid_j}] \, ,\end{equation*}
for nested matroids $\natroid_j$.
Therefore, applying $\widehat{\ncPsi}$ on both sides and using Equation~\eqref{eq:strongvaluationncPsi} we have
\begin{equation*}
\ncPsi(\matroid ) = \sum_j \alpha_j \ncPsi (\natroid_j) \ \in\ \spn \{\ncPsi(\natroid) | \text{$\natroid$ is a nested matroid}\}\, .
\end{equation*}
By \Cref{prop:loopcolooprelation}, nested matroids that only differ by transforming loops into coloops have the same chromatic word-quasisymmetric function.

We use \cite[Theorem 4.5.]{hampe_intersection_2017}, where it was shown that the number of loopless nested matroids on the set $[\nd ]$ of rank $r$ is the Eulerian number $ \Eulerian_{r-1,\nd}$.
Recall that there are no loopless matroids of rank $0$.
 From $\sum_{i=0}^{\nd-1} \Eulerian_{i,\nd} =\nd!$ it follows that there are  $\nd!$ loopless nested matroids on the ground set $[\nd]$ for rank $r\in\{1,\dots,\nd\}$.
We also reprove this last (coarser) result in the Appendix in \Cref{prop:hampe_dfact}.
Therefore the $d^\text{th}$ graded piece of the image of the map $\ncPsi\colon \Mat\to\WQSym$ is generated by a set of size $\nd!$, hence its dimension is $\leq\nd!$, establishing the upper bound.
\end{proof}

We note that in \cite[Theorem 1.5]{derksen2010valuative}, other enumerative results find the exponential power series of the number of nested matroids with a fixed rank and dimension.

\subsection{The lower bound}

We now prove the lower bound given in \cref{thm:lower_bound}.
Recall from \cref{obs:points} that the point Schubert matroids, i.e., those with only one basis, all have the same chromatic word-quasisymmetric function. The following Theorem states that the chromatic word-quasisymmetric functions for Schubert matroids except the aforementioned are linearly independent

\begin{thm}\label{thm:lower_bound_specific}
Consider the Schubert matroids generated by $A\subseteq[\nd]$ such that $A\neq\{1,2,\dots,k\}$ for any $k=0,1,\dots,\nd-1$.
Note that this implies $A\neq\emptyset$ but $A=[d]$ is possible.
The corresponding chromatic word-quasisymmetric functions $\ncPsi(\SM(A)) $ are linearly independent.
Therefore, the rank of the $\nd$ component of $\ncPsi$ is at least $2^{\nd}-\nd$.
\end{thm}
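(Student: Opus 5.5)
The plan is to exhibit, for each admissible $A$, a set composition $\otau_A$ and an order on the admissible sets, such that the matrix of coefficients $\big[\Mnco_{\otau_{A'}}\big]\ncPsi(\SM(A))$ is unitriangular, possibly after some row operations. Recall that $\ncPsi(\SM(A))=\sum_{\opi \text{ generic}}\Mnco_{\opi}$. So the entry of this matrix indexed by $(A,A')$ is $1$ if $\otau_{A'}$ is $\SM(A)$-generic and $0$ otherwise. Linear independence of the $2^\nd-\nd$ functions then follows at once.

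The natural candidate is the two-block composition $\otau_A=(A^c\,|\,A)$, with $f_{\otau_A}=1$ on $A^c$ and $2$ on $A$. For $A=[\nd]$ we take the one-block composition.
\begin{itemize}
\item[(1)] $\otau_A$ is $\SM(A)$-generic. Every basis $B$ of $\SM(A)$ has size $|A|$, and its weight is $|B|+|B\cap A|$. This weight is maximal exactly when $B=A$, and $A$ is a basis because it is the Gale-maximal element. So the diagonal entries are $1$.
\item[(2)] General genericity criterion. By the greedy algorithm, a two-block composition $(C\,|\,D)$ is $\matroid$-generic if and only if $\matroid|_D$ has a unique basis and $\matroid/D$ has a unique basis. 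Equivalently, every element of $D$ is a loop or coloop of $\matroid|_D$, and every element of $C$ is a loop or coloop of $\matroid/D$.
\item[(3)] Translation to Schubert matroids. For $\matroid=\SM(A)$ I would express the criterion in (2) through the lattice path of $A$. In $\SM(A)$, an initial segment $\{1,\dots,j\}$ behaves like the free part, and the rank of a set $D$ is computed greedily by matching elements of $D$ from the left against the north steps of $A$.
\end{itemize}

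The key claim to prove is then: if $A'\neq A$ and $(A'^c\,|\,A')$ is $\SM(A)$-generic, then $A' \prec A$ for a suitable total order $\prec$. I would try first to order by rank $|A|$, breaking ties by the Gale order. Within a fixed rank, genericity of $(A'^c\,|\,A')$ for $\SM(A)$ should force $A'$ to be a basis; otherwise the unique-max condition of (2) fails because $\SM(A)$ is closed downward under the Gale order. It should also force $A'\leqG A$, giving triangularity inside each rank. Across ranks, the danger is a point-like behaviour: $\SM(A)$ has many loops and coloops exactly when $A$ contains a long initial segment $\{1,\dots,k\}$. This is precisely why the point Schubert matroids must be excluded, and why the argument has to use $A\neq\{1,\dots,k\}$ for $k<\nd$.

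The main obstacle is this cross-rank triangularity. The two-block compositions may not separate an $A$ from some $A'$ of different rank that shares a common prefix. If the plain order fails, I would first try to pick a better order, for example comparing $A$ by its largest non-initial element. Failing that, I would refine $\otau_A$ to a composition with more blocks, such as splitting off the initial run $\{1,\dots,k\}\subseteq A$ as its own block. This makes $\otau_A$ generic for fewer matroids while keeping it generic for $\SM(A)$, and I would then perform row operations to clear the remaining off-diagonal entries. With triangularity in hand, the count is immediate: there are $2^\nd$ subsets, and we exclude the $\nd$ sets $\{1,\dots,k\}$ with $0\leq k\leq \nd-1$, leaving $2^\nd-\nd$ linearly independent functions.
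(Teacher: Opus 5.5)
Your steps (1) and (2) are correct, but the central step fails, and the problem is the choice of columns rather than the order. With the test compositions $(D^c\,|\,D)$ the square matrix is already singular for $\nd=4$, so no ordering and no row operations can make it triangular with nonzero diagonal.

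Apply your criterion (2) with $D$ proper and nonempty. The composition $(D^c\,|\,D)$ is generic for $\SM(\{2\})$ iff $|D\cap\{1,2\}|=1$, and for $\SM(\{1,2,4\})$ iff $|D\cap\{3,4\}|=1$. It is generic for $\SM(\{4\})$ iff $|D|=1$, and for $\SM(\{2,3,4\})$ iff $|D|=3$. For $\SM(\{2,4\})$ it is generic iff $D$ is one of its bases, i.e. any $2$-set except $\{3,4\}$. None of these five matroids has a unique basis, so your one-block column vanishes on all five rows. Now take your eleven two-block columns $D=\{2\},\{3\},\{4\},\{1,3\},\{1,4\},\{2,3\},\{2,4\},\{3,4\},\{1,2,4\},\{1,3,4\},\{2,3,4\}$. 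On these, both sides of
\[ \mathrm{row}\,\SM(\{2\})+\mathrm{row}\,\SM(\{1,2,4\})=\mathrm{row}\,\SM(\{4\})+2\,\mathrm{row}\,\SM(\{2,4\})+\mathrm{row}\,\SM(\{2,3,4\}) \]
equal $(1,1,1,2,2,2,2,0,1,1,1)$. The rows of your $12\times 12$ matrix are therefore linearly dependent.

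Your within-rank heuristic is also false. The composition $(13\,|\,24)$ is $\SM(\{1,3\})$-generic, with unique maximiser $\{1,2\}$ because $4$ is a loop and $1$ a coloop, yet $\{2,4\}\not\leqG\{1,3\}$. Loops let $(D^c\,|\,D)$ be generic without $D$ being a basis, so you get ones on both sides of the diagonal.

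The missing idea is to reverse your philosophy. You chose compositions that are generic for as few matroids as possible. A two-block composition is generic only when a $0/1$ direction cuts out a vertex, and this rigidity is exactly what produces relations like the one above. The paper instead uses compositions that are almost all singletons, so genericity is the default and at most one tie can occur. Specifically it takes
\[ \opi_S=c_{\nd-l}|\cdots|c_2|c_1s_l|s_{l-1}|\cdots|s_1 \]
and proves two things via the greedy algorithm and a lemma about increasing sequences. First, $\opi_A$ is \emph{not} $\SM(A)$-generic, because the tie between $a_r$ and $c_1$ gives two maximal bases, $A$ and $A\setminus\{a_r\}\cup\{c_1\}$. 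Second, $\opi_T$ is $\SM(A)$-generic whenever $|A|<|T|$, or $|A|=|T|$ and $T\not\leqG A$.

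With exactly your order (size, then a linear extension of the Gale order), this gives zeros on the diagonal and ones everywhere after it. Subtracting the all-ones row of $\SM([\nd])$ from every other row then yields a triangular matrix with diagonal entries $\pm1$. Your fallback of ``refining $\otau_A$ to more blocks'' points loosely in this direction. However, it specifies neither the compositions nor the genericity statements, and those are the entire content of the proof.
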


Note, the Schubert matroids under consideration above may contain loops.
However, by converting all loops into coloops we get a loopfree nested matroid with the same chromatic word-quasisymmetric function.

In order to prove \cref{thm:lower_bound_specific} we need two Lemmas.

\begin{lm}\label{lm:piTindependent}
 Let $\SM(A)$ be the Schubert matroid corresponding to
 \begin{equation*}
  A=\{a_1<a_2<\dots<a_r\}\subset[\nd]\,.
 \end{equation*}
 Let $t_1<t_2<\dots<t_l$ be an increasing sequence of elements in $[\nd]$.

 If $\{t_1<t_2<\dots<t_{i}\}\subset[\nd]$ is an independent set in $\SM(A)$ for some $i=0,1,\dots,l-1$ and $\{t_1<t_2<\dots<t_{i}\}\cup\{t_{i+1}\}$ is dependent in $\SM(A)$,
 then $\{t_1<t_2<\dots<t_{i}\}\cup\{t_j\}$ is also dependent in $\SM(A)$  for all $j=i+1,\dots,l$.
\end{lm}
\begin{proof}
 The set $\{t_1<t_2<\dots<t_{i}\}$ is independent  in $\SM(A)$ if and only if 
 \begin{align*}
  t_1\leq a_{r-i+1}\ \text{ and, }\
  \dots, \text{ and, }\
  t_{i-1}\leq a_{r-1},\ \text{ and, }\
  t_{i}\leq a_r \, .
 \end{align*}
If $\{t_1<t_2<\dots<t_{i}\}\cup\{t_{i+1}\}$ is not independent, then we have
 \begin{align*}
  t_1> a_{r-i}\text{ or, } \
  \dots, \text{ or, }\
  t_{i-1}> a_{r-2}, \text{ or, } \
  t_{i}> a_{r-1}, \ \text{ or, }\
  t_{i+1}> a_r \, .
\end{align*}
 This implies for $j=i+1,\dots,l$ that $t_j\geq t_{i+1}$, so
  \begin{align*}
  t_1> a_{r-i}\text{ or, } \
  \dots, \text{ or, }\
  t_{i-1}> a_{r-2}, \text{ or, } \
  t_{i}> a_{r-1}, \ \text{ or, }\
  t_j> a_r\, .
 \end{align*}
 Hence $\{t_1<t_2<\dots<t_{i}\}\cup\{t_j\}$ is also not independent for all $j=i+1,\dots,l$.
\end{proof}

\begin{lm}\label{lm:lower_bound_upper_triangular}
For $S = \{s_1 < \dots < s_l\}$ a subset of $[d]$, let $\{c_1 < \dots < c_{\nd-l}\}$ be its complementary set, and define the set composition of $[d]$:

\begin{equation}\label{eq:opi_T}
 \opi_S \coloneqq c_{\nd-l} | c_{\nd-l-1} | \dots | c_2 | c_1 s_l | s_{l-1}| \dots |s_2 | s_1 \, .
\end{equation}

Let $A = \{a_1 < \dots < a_r\}$ and $T = \{t_1 < \dots < t_m\}$ be subsets of $[\nd]$ that are not of the form $\{1, \dots, k\}$ for any $k = 0, \dots, \nd-1$ (in particular, $A\neq\emptyset$, $T\neq\emptyset$).
Then:
\begin{enumerate}[(i)]

\item\label{item:notgeneric} The set composition $\opi_A$ is not $\SM(A)$-generic.

\item\label{item:smallAgeneric} If $|A| < |T|$ then $\opi_T $ is $\SM (A)$-generic.

\item\label{item:equalAgeneric} If $|A| = |T|$ and $T \not\leqG A $ then $\opi_T$ is $\SM (A) $-generic.

\end{enumerate}
\end{lm}

Note that $A=[\nd]$,$T=[\nd]$ is possible.

\begin{proof}
 \ref{item:notgeneric}
By definition of Schubert matroid we have that $A$ is a basis in $\SM(A)$. Since $A\neq\{1,\dots,l\}$ we know that $c_1<a_r$ and hence $A'\coloneqq A\setminus\{a_r\}\cup\{c_1\}$ is also a basis.
By definition (see Equation~\eqref{eq:setcompositiongeneric}), we have that $f_{\opi_A}$ has the same value for the two bases $A$ and $A'$.
Hence, $\opi_A$ is not $\SM(A)$-generic.
 
 \ref{item:smallAgeneric} 
 Let $A=\{a_1<\dots<a_r\}\subseteq [\nd]$ with $r<m=\lvert T\rvert$.
 We want to show that there is a unique $\opi_T$-maximal basis $B\in \SM(A)$.
 We construct a basis of $\SM(A)$ greedily and show that there is a unique way of doing so.
 We construct the basis iteratively, i.e.,
 we try to add elements into independent sets starting with the smallest element in $T$ ($T=\{t_1<t_2<\dots<t_{m-1}<t_m\}$).
 Since $\lvert A\rvert =r<m=\lvert T\rvert$, either $\{t_1<\dots<t_r\}$ is a basis in $\SM(A)$ and hence the unique $\opi_T$ maximal basis,
 or we can apply \Cref{lm:piTindependent} and the unique $\opi_T$-maximal basis is $\{t_1<\dots<t_i\}\cup\{c_1<\dots<c_{r-i}\}$ for some $i\in\{1,\dots,m-1\}$.
 
\ref{item:equalAgeneric} 
By assumption ($T \not\leqG A $), the set $T$ is not a basis in $\SM(A)$.
Therefore, there is an element $i\in\{0,1,\dots,m\}$ such that $\{t_1<t_2<\dots<t_{i-1}\}$ is independent,
but $\{t_1<t_2<\dots<t_{i-1}\}\cup\{t_{i+1}\}$ is not independent in $\SM(A)$.
Again, by \Cref{lm:piTindependent}, $\{t_1<t_2<\dots<t_{i-1}\}\cup\{t_j\} $ is dependent for all $j=i+1,\dots,m$ and the Greedy algorithm constructs the unique $\opi_T$-maximal basis $\{t_1<t_2<\dots<t_{i-1}\}\cup\{c_1<\dots<c_{r-i}\}$ in $\SM(A)$.
\end{proof}

\begin{proof}[Proof of \cref{thm:lower_bound_specific}]
Recall that word-quasisymmetric functions have a basis indexed by set compositions.
In order to prove the linear independence, we will construct a matrix $\wqscmatrix$ with Schubert matroids indexing rows, set compositions indexing columns and entries recording whether the set composition is generic or not:
\begin{equation}\label{eq:defmatrix}
 \wqscmatrix_{\SM(A),\opi}\coloneqq\begin{cases}
                            1, & \text{if $\opi$ is $\SM(A)$-generic }\\
                            0, & \text{if $\opi$ is not $\SM(A)$-generic }
                           \end{cases}
\end{equation}

We can show that with the right choices of set compositions and ordering of rows and columns, after one row operation, we get a lower triangular matrix.

We define a total order on the collection of subsets $A\subset[d]$ that are not of the form $\{1,2,\dots,k\}$ for $k=0,1,\dots,d-1$ as follows:
\begin{itemize}
 \item we first order subsets increasingly by their size, i.e., starting with one-element subsets;
 \item for subsets with the same size we choose a linear extension of the Gale order $\leqG$, i.e., we successively choose minimal elements in the Gale order.
\end{itemize}
Rows in the matrix $\wqscmatrix$ are now indexed by Schubert matroids $\SM(A)$ following the total order on subsets defined above. 
Columns are indexed by set compositions $\opi_T$ as defined in \eqref{eq:opi_T} in \cref{lm:lower_bound_upper_triangular}, again respecting the total order on subsets defined above.

Note that the last row of $\wqscmatrix$ is indexed by the Schubert matroid $\SM([d])$ and the last column is indexed by the set composition $ d| d-1 | \dots | 1$.
Since $\SM([d])$ has only one basis, every set composition is generic and the last row in $\wqscmatrix$ consists of $1$s.
Similarly, the set composition $ d| d-1 | \dots | 1$ is generic for every matroid and the last column in $\wqscmatrix$ consists of $1$s.

It now follows from \cref{lm:lower_bound_upper_triangular} part \ref{item:notgeneric} that, except for the last row, the diagonal entries are $0$.
Moreover, in each row indexed by $\SM(A)$, entries above the diagonal entry correspond to set compositions $\opi_T$ where either $\lvert A\rvert < \lvert T\rvert$ or $ \lvert A\rvert = \lvert T\rvert$ and $T \not\leqG A $, so by \cref{lm:lower_bound_upper_triangular} parts \ref{item:smallAgeneric} and \ref{item:equalAgeneric} those entries are equal to $1$.

We now subtract the last row (containing all $1$s) from every other row. Then diagonal entries (except the last one) are $-1$ and the matrix is lower triangular.
\end{proof}

\begin{smpl}
 For $d=4$ the matrix $\wqscmatrix$ as defined in Equation~\eqref{eq:defmatrix} in the proof above is the following $12\times 12$ matrix:

\begin{equation*}
\begin{bmatrix}
0 & 1 & 1 & 1 & 1 & 1 & 1 & 1 & 1 & 1 & 1 & 1 \\
0 & 0 & 1 & 1 & 1 & 1 & 1 & 1 & 1 & 1 & 1 & 1 \\
0 & 0 & 0 & 1 & 1 & 1 & 1 & 1 & 1 & 1 & 1 & 1 \\
1 & 1 & 1 & 0 & 1 & 1 & 1 & 1 & 1 & 1 & 1 & 1 \\
1 & 1 & 1 & 0 & 0 & 1 & 1 & 1 & 1 & 1 & 1 & 1 \\
1 & 1 & 1 & 0 & 1 & 0 & 1 & 1 & 1 & 1 & 1 & 1 \\
1 & 1 & 1 & 0 & 0 & 0 & 0 & 1 & 1 & 1 & 1 & 1 \\
1 & 1 & 1 & 0 & 0 & 0 & 0 & 0 & 1 & 1 & 1 & 1 \\
1 & 1 & 1 & 1 & 1 & 1 & 1 & 1 & 0 & 1 & 1 & 1 \\
1 & 1 & 1 & 1 & 1 & 1 & 1 & 1 & 0 & 0 & 1 & 1 \\
1 & 1 & 1 & 1 & 1 & 1 & 1 & 1 & 0 & 0 & 0 & 1 \\
1 & 1 & 1 & 1 & 1 & 1 & 1 & 1 & 1 & 1 & 1 & 1 \\
\end{bmatrix}
\end{equation*}

We note that the order that we chose on sets is the following: 
\begin{equation*}
\{2\}, \{3\}, \{4\}, \{1, 3\}, \{1, 4\}, \{2, 3\}, \{2, 4\}, \{3, 4\}, \{1, 2, 4\}, \{1, 3, 4\}, \{2, 3, 4\}, \{1, 2, 3, 4\}\end{equation*}

Thus, for instance, the second row and first column shows that the Schubert matroid $\SM ( \{  3\} ) $, which has bases $\{ \{1\}, \{2\}, \{3\}\}$ is not $ (4 | 3 | 12 )$-generic.
\end{smpl}

\subsection{Conjectures based on computations}\label{sec:computational}
\hspace{1ex}

With the package developed in \cite{penaguiao2026chromatic} we are able to present some conjectures aided with data from computations.
The main conjecture supported by computational evidence is \cref{conj:kernelspan}, as discussed in the following, but we are also able to present some geometrically flavored conjectures based on these data.

 A set composition $(\pi_1, \ldots, \pi_k)=\opi \models [d]$ is a \Def{max-min set composition} if
 \begin{equation*}
  \max \pi_i  > \min \pi_{i+1}  \text{ for }i=1,\dots,k-1\,.
 \end{equation*}
Note that the set compositions we used to prove the lower bound (defined in Equation~\eqref{eq:opi_T}) are a special kind of max-min set compositions.
\begin{prop} The collection of
 max-min set compositions of $[\nd]$ is in bijection with permutations $\Sym_d$.
\end{prop}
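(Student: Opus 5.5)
We construct an explicit bijection between permutations of $[\nd]$ and max-min set compositions, using descents. Write a permutation $\sigma\in\Sym_\nd$ in one-line notation $\sigma(1)\sigma(2)\cdots\sigma(\nd)$ and cut it into its maximal increasing runs, i.e., cut exactly at the descents (positions $j$ with $\sigma(j)>\sigma(j+1)$). The runs, read as sets, form a set composition $\Phi(\sigma)=(\pi_1,\dots,\pi_k)$. Each run is increasing, so $\max\pi_i$ is the last letter of run $i$ and $\min\pi_{i+1}$ is the first letter of run $i+1$. These two letters are adjacent in $\sigma$ and form a descent. Hence $\max\pi_i>\min\pi_{i+1}$, and $\Phi(\sigma)$ is a max-min set composition.

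For the inverse map $\Psi$, take a max-min set composition $\opi=(\pi_1,\dots,\pi_k)$. List each block in increasing order and concatenate the blocks in the order $\pi_1,\pi_2,\dots,\pi_k$; this gives a word $\Psi(\opi)$ that is a permutation of $[\nd]$. Inside each block the word increases. At each junction between blocks the letter $\max\pi_i$ is followed by $\min\pi_{i+1}$, which is a descent by the max-min condition. So the descent set of $\Psi(\opi)$ is exactly the set of block junctions, and its maximal increasing runs are exactly the blocks. This gives $\Phi(\Psi(\opi))=\opi$.

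Conversely, cutting $\sigma$ at its descents and then rereading each run in increasing order returns the same run, since the runs were already increasing. So $\Psi(\Phi(\sigma))=\sigma$, and the two maps are mutually inverse.

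The only point needing care is the equality $\Phi\circ\Psi=\mathrm{id}$. There the max-min hypothesis is exactly what ensures that no two consecutive blocks merge into a single increasing run. The merging would happen precisely when $\max\pi_i<\min\pi_{i+1}$, which is why such set compositions are excluded. Everything else is routine.
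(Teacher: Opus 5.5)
Your proposal is correct and uses the same bijection as the paper: sort each block increasingly and concatenate, with the inverse obtained by cutting the permutation at its descents. You also spell out what the paper leaves implicit, namely that the max-min condition makes every block junction a descent, so the two maps are mutually inverse.
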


\begin{proof}
We show this by constructing a bijection from max-min set compositions to permutations: order every part $\pi_i$ increasingly, concatenate parts to get inline notation of the permutation. 
This is invertible: indeed, for a permutation $\sigma$, identify descents and use those to define the parts, this describes uniquely the inverse image.
\end{proof}

\begin{conj}\label{conj:kernelspanspecific}
For fixed $d$ define a matrix with rows indexed by loopless nested matroids on $[d]$ and columns indexed by max-min set compositions of $[d]$ by
\begin{equation*}
 \wqscmatrixconj_{\natroid,\opi}\coloneqq\begin{cases}
                            1, & \text{if $\opi$ is $\natroid$-generic }\\
                            0, & \text{if $\opi$ is not $\natroid$-generic }
                           \end{cases}\,.
\end{equation*}
We conjecture this matrix to have full rank $d!$.
\end{conj}

The choice of max-min set compositions is arbitrary, as it is very well possible that some other choice of columns allows for a non-singular matrix.
It is, however, the one choice that has stood the test of computational analysis performed by the authors.

\begin{smpl}\label{smpl:d4nested}
Comparing this matrix with the one obtained in \cref{thm:lower_bound_specific}, we observe that the resulting matrix $\wqscmatrixconj_{\natroid,\opi}$ cannot be easily rearranged so that all zeroes are above or on the diagonal.
This fundamental step in \cref{thm:lower_bound_specific} cannot be made in full generality here, which jeopardizes our current proof strategy.

Specifically, for $\nd = 3$, and using the codebase in \cite{penaguiao2026chromatic}, specifically the file \textit{03\_min\_max\_conjecture\_testing.ipynb}:

\begin{verbatim}
python code/package/matrix_computation_small.py
\end{verbatim}
we obtain the following matrix
\begin{equation*}
\begin{bmatrix}
0 & 1 & 0 & 1 & 0 & 1 \\
0 & 0 & 1 & 0 & 1 & 1 \\
1 & 1 & 1 & 1 & 1 & 1 \\
0 & 1 & 1 & 0 & 1 & 1 \\
0 & 0 & 0 & 1 & 1 & 1 \\
0 & 1 & 1 & 1 & 0 & 1 \\
\end{bmatrix}
\end{equation*}

For $\nd = 4$ we can generate the $24\times 24 $ matrix
{\small
\begin{equation*}
 \begin{bmatrix}
0 & 1 & 0 & 1 & 0 & 1 & 0 & 1 & 0 & 1 & 0 & 1 & 0 & 1 & 0 & 1 & 0 & 1 & 0 & 1 & 0 & 1 & 0 & 1 \\
0 & 0 & 1 & 0 & 1 & 1 & 0 & 0 & 1 & 0 & 1 & 1 & 0 & 0 & 1 & 0 & 1 & 1 & 0 & 0 & 1 & 0 & 1 & 1 \\
0 & 0 & 0 & 0 & 0 & 0 & 1 & 1 & 0 & 0 & 0 & 0 & 1 & 1 & 1 & 1 & 0 & 0 & 1 & 1 & 1 & 1 & 1 & 1 \\
1 & 1 & 1 & 1 & 1 & 1 & 1 & 1 & 1 & 1 & 1 & 1 & 1 & 1 & 1 & 1 & 1 & 1 & 1 & 1 & 1 & 1 & 1 & 1 \\
0 & 1 & 0 & 1 & 0 & 1 & 0 & 1 & 1 & 0 & 1 & 1 & 0 & 1 & 1 & 0 & 1 & 1 & 0 & 1 & 1 & 0 & 1 & 1 \\
0 & 0 & 1 & 0 & 1 & 1 & 1 & 1 & 0 & 0 & 0 & 0 & 1 & 1 & 1 & 1 & 0 & 0 & 1 & 1 & 1 & 1 & 1 & 1 \\
0 & 1 & 1 & 0 & 1 & 1 & 0 & 1 & 0 & 1 & 0 & 1 & 0 & 0 & 0 & 1 & 1 & 1 & 0 & 0 & 0 & 1 & 1 & 1 \\
0 & 0 & 0 & 0 & 0 & 0 & 0 & 0 & 1 & 0 & 1 & 1 & 1 & 1 & 1 & 1 & 0 & 0 & 1 & 1 & 1 & 1 & 1 & 1 \\
0 & 0 & 0 & 1 & 1 & 1 & 0 & 0 & 0 & 1 & 1 & 1 & 0 & 1 & 0 & 1 & 0 & 1 & 0 & 1 & 1 & 1 & 0 & 1 \\
0 & 0 & 1 & 0 & 0 & 0 & 1 & 1 & 1 & 0 & 0 & 0 & 0 & 0 & 1 & 0 & 1 & 1 & 1 & 1 & 1 & 1 & 1 & 1 \\
0 & 1 & 1 & 1 & 0 & 1 & 0 & 1 & 1 & 1 & 0 & 1 & 0 & 1 & 1 & 1 & 0 & 1 & 0 & 1 & 0 & 1 & 0 & 1 \\
0 & 0 & 0 & 0 & 1 & 1 & 1 & 1 & 0 & 0 & 1 & 1 & 1 & 1 & 1 & 1 & 1 & 1 & 0 & 0 & 1 & 0 & 1 & 1 \\
0 & 0 & 1 & 0 & 1 & 1 & 0 & 1 & 1 & 0 & 1 & 1 & 0 & 0 & 1 & 0 & 1 & 1 & 0 & 0 & 1 & 0 & 1 & 1 \\
0 & 1 & 1 & 0 & 1 & 1 & 0 & 1 & 1 & 0 & 1 & 1 & 1 & 1 & 1 & 1 & 0 & 0 & 1 & 1 & 1 & 1 & 1 & 1 \\
0 & 0 & 0 & 0 & 1 & 1 & 0 & 0 & 1 & 0 & 1 & 1 & 0 & 1 & 1 & 0 & 1 & 1 & 0 & 0 & 1 & 0 & 1 & 1 \\
0 & 0 & 0 & 1 & 1 & 1 & 1 & 1 & 1 & 0 & 0 & 0 & 0 & 1 & 1 & 0 & 1 & 1 & 1 & 1 & 1 & 1 & 1 & 1 \\
0 & 0 & 1 & 0 & 0 & 0 & 0 & 0 & 1 & 0 & 1 & 1 & 0 & 0 & 1 & 0 & 1 & 1 & 0 & 1 & 1 & 0 & 1 & 1 \\
0 & 1 & 1 & 1 & 0 & 1 & 1 & 1 & 0 & 0 & 1 & 1 & 1 & 1 & 1 & 1 & 1 & 1 & 0 & 1 & 1 & 0 & 1 & 1 \\
0 & 0 & 1 & 0 & 1 & 1 & 0 & 0 & 0 & 0 & 1 & 1 & 0 & 0 & 0 & 1 & 1 & 1 & 0 & 0 & 1 & 0 & 1 & 1 \\
0 & 0 & 1 & 0 & 0 & 0 & 0 & 0 & 0 & 1 & 1 & 1 & 0 & 0 & 0 & 1 & 1 & 1 & 1 & 1 & 1 & 1 & 1 & 1 \\
0 & 0 & 1 & 0 & 1 & 1 & 0 & 1 & 1 & 0 & 0 & 0 & 0 & 0 & 1 & 0 & 1 & 1 & 0 & 0 & 0 & 1 & 1 & 1 \\
0 & 1 & 0 & 0 & 1 & 1 & 0 & 1 & 1 & 1 & 0 & 1 & 1 & 1 & 1 & 1 & 1 & 1 & 0 & 0 & 0 & 1 & 1 & 1 \\
0 & 0 & 1 & 0 & 1 & 1 & 0 & 0 & 1 & 0 & 1 & 1 & 0 & 1 & 1 & 1 & 0 & 0 & 0 & 1 & 1 & 1 & 0 & 1 \\
0 & 0 & 1 & 1 & 1 & 1 & 1 & 1 & 1 & 1 & 1 & 1 & 0 & 1 & 1 & 1 & 0 & 1 & 0 & 1 & 1 & 1 & 0 & 1
\end{bmatrix}
\end{equation*}
}

For instance, the row corresponding to $\SM(\{2,4\})$ is detailed in \cref{tab:setcomp_SM24}.
\end{smpl}

It might be useful to analyse the chromatic word-quasisymmetric functions in different bases.
In the following conjecture we introduce and use a so called \emph{alternating sum bases} of the $\WQSym$.
Specifically, arises as an alternating sum of the monomial basis:
\begin{equation*}
\mathbb{P}_{\opi} = \sum_{\otau \preceq \opi} (-1)^{\ell(\opi) - \ell(\otau)} \mathbb{M}_{\otau}\, ,\end{equation*}
where $\otau\preceq\opi$ means that $\otau$ results from $\opi$ by a coarsening of set compositions.
We conjecture the following about this matrix.

\begin{conj}[Alternating sum matrix]
Half of the rows are zero in this matrix for every dimension.
Furthermore, the remaining entries are $1$, $0$ or $-1$.
\end{conj}

This conjecture was observed for $d\leq 6 $ in the code
\begin{verbatim}
python code/package/dimension_computation_csf.py
\end{verbatim}

Computations for $\nd\leq 5$ suggest that the chromatic word-quasisymmetric functions of loopless nested matroids might even form a lattice basis.

\section{Further work}

The result on the number of loopless nested matroids by Hampe \cite{hampe_intersection_2017} is finer. Hampe counts nested matroids (loopless) by rank: those are the Eulerian numbers.

\begin{quest}
What is the rank of each matrix-component separated out by matroid-rank?
\end{quest}
This is a simpler question that might help to gain a better understanding towards the main conjecture of this paper.

The results from \cite{derksen2010valuative} that we used here apply to polymatroids and megamatroids, therefore a lower bound on the corresponding rank follows immediately.
\begin{quest}
What is a extension of the chromatic word-quasisymmetric functions presented here to polymatroids and mega matroids?
\end{quest}

In this paper, we appealed to \cite{derksen2010valuative} in order to show that the chromatic quasisymmetric and word-quasisymmetric function of matroids are strongly valuative functions.
\begin{quest}
What is the actual witness function to the strong valuativity of the chromatic quasisymmetric and word-quasisymmetric functions?
\end{quest}

In \cite{ferroni_polytope_2025}, a new polytope of all matroids is created, where the coefficients of the indicator function of a matroid polytope, as written as a linear combination of indicator functions of nested matroid base polytopes, form the vertices of the so called \emph{polytope of all matroids} $\Omega_{r,{\nd}}$.
Further, in \cite[section 7.2.]{ferroni_polytope_2025}, the \emph{polytope of $\varphi$} is introduced for a valuative function $\varphi$.
This polytope is always a projection of the polytope of all matroids $\Omega_{r,{\nd}}$.

\begin{quest}
What is the corresponding polytope for the valuative function $\ncPsi$ introduced here, as well as $\cPsi$.
\end{quest}

For example, when $d = 5$ and rank $3$ there are six different loopless Schubert matroids, so $\Omega_{r,{\nd}}$ is a polytope in $\R^6$.

For graphs and Minkowski sums of simplices such a basis set, the kernel of the corresponding function was described in \cite{penaguiao2020kernel}, so an extension of the question above is also valid in this context.

For the final question, we recall that lattice-path matroids are defined in \cite{Oxley2006}.
This is the equivalent of the Stanley tree conjecture from \cite{stanley1995symmetric} for matroids.

\begin{quest}
Are loopless lattice-path matoids distinguished by their chromatic quasisymmetric function?
\end{quest}

\appendix
\crefalias{section}{appendix}

\section{Quasisymmetric functions of matroids}\label{sec:qsym}

We review the work by Billera-Jia-Reiner \cite{billera2009quasisymmetric} and combine it with results from Derksen-Fink \cite{derksen2010valuative} to tell the parallel story of quasisymmetric functions of matroids.
We add a description of the relations contained in the kernel of the map $\cPsi\colon\MatLinSpace\to\QSym$.

\subsection{Preliminaries}

Let $\nd$ be a non-negative integer.
A \Def{composition} $\alpha $ of $\nd$ is a tuple $(\alpha_1, \ldots, \alpha_k)$ of positive integers such that $|\alpha| = \nd$, where $|\alpha| \coloneqq \sum_i \alpha_i$.
We write $\alpha \models \nd$ and we say that $\ell(\alpha ) = k$ is the \Def{length} of $\alpha$.
There is a unique composition of $0$, the empty tuple.

In this section we work over a countable infinite collection of commuting variables $\bx = \bx_1, \bx_2, \ldots $.
For a map $f\colon[\nd]\to \Z_{\geq1}$,
we define the degree $\nd$ monomial
\begin{equation*}
 \bx_f\coloneqq \Pi_{i\in[\nd]} \bx_{f(i)}\,.
\end{equation*}
A formal power series of degree $\nd$ on $\bx $ is a formal sum
\begin{equation*}
 F(\bx) = \sum_{ f\colon[\nd]\to \Z_{\geq1}} c_{f} \bx_{f}\,.
\end{equation*}

A \Def{quasisymmetric function} of degree $\nd$ is a formal series $\sum c_{f} \bx_{f}$ such that for every composition $\alpha\models \nd$, the coefficient of $\bx_{i_1}^{\alpha_1} \cdots \bx_{i_k}^{\alpha_k}$ is independent of the choice of indices $i_1 < \cdots < i_k$.
We write $\QSym_\nd $ for the vector space of quasisymmetric functions of degree $\nd$, and we let $\QSym = \oplus_{\nd\geq 0} \QSym_\nd$ be the space of finite sums of homogeneous quasisymmetric functions, which we simply call \Def{quasisymmetric functions}.

The space of quasisymmetric functions is a ring, as observed in \cite{gessel1984multipartite}, under the usual polynomial product extended to formal sums.
The \Def{monomial basis} indexed by compositions $\alpha$ is given by
\begin{equation*}
\Mco_{\alpha} = \sum_{i_1 < \cdots < i_k}\bx_{i_1}^{\alpha_1}\cdots\bx_{i_k}^{\alpha_k} \, ,
\end{equation*}
where $k=l(\alpha)$ is the length of the composition $\alpha$ and the sum runs over all ordered k-tuples of indices.
In this way we have, for instance, $\Mco_{(1, 1)} = \sum_{i<j} \bx_i \bx_j$.
One can see that $\Mco_{(1)}^2 = 2 \Mco_{(1, 1)} + \Mco_{(2)}$.
Recall that, for $\nd > 0$, the number of compositions of $\nd$ is $2^{\nd-1}$ (stars and bars) and hence the dimension of the $\nd$-th graded piece $\QSym_d$ is also $2^{\nd-1}$.

Recall that we say a function $f\colon \groundset \to\R$ is an $\matroid$-generic map for a matroid $\matroid=(\groundset,\bases)$ if there is a unique basis $B\in \bases$ with maximum weight $f(B)$.
Similarly, we call a set composition $\opi$ an \Def{$\matroid$-generic set composition} if the map $f_{\opi} \colon\groundset\to[\nd]$ is $\matroid$-generic.

Let $\matroid$ be a matroid with ground set $\groundset$.
The (chromatic) \Def{quasisymmetric function of a matroid} $\cPsi(\matroid)$ is defined as
\begin{align}\label{eq:defqsymmatroid}
 \cPsi(\matroid) \coloneqq \sum_{\substack{f:[\nd]\to\Z_{\geq1}\\ \text{ is $\matroid$-generic}}}  \bx_f
  = \sum_{\substack{\opi \text{ is $\matroid$-generic}\\\text{set composition of }[\nd]}} \Mco_{\alpha(\opi)}\, .
\end{align}
The second equality follows from the fact, that whether a function $f$ is $\matroid$-generic or not, only depends on the set composition type $\opi(f)$ (see also \cite[Proposition 2.1]{billera2009quasisymmetric}).

\subsection{Rank computations}

Billera-Jia-Reiner showed that the quasisymmetric function of matroids  defines a Hopf algebra morphism from the matroid Hopf algebra to the algebra of quasisymmetric functions,
is an isomorphism invariant, and it is (weakly) valuative \cite{billera2009quasisymmetric}.
Combining the latter with the result by Derksen-Fink \cite[Theorem 3.5]{derksen2010valuative}, that strongly valuative is equivalent to weakly valuative for matroids implies that the kernel of the map $\cPsi\colon\MatLinSpace\to\QSym$ contains all the valuative relations, that is,
\begin{equation}\label{eq:appvaluative}
 0=\sum \alpha_i \onebb[\Pol_{\matroid_i}] \quad \Rightarrow\quad
 0=\sum \alpha_i \cPsi({\matroid_i})\,.
\end{equation}
Hence, the kernel of the map $\cPsi\colon\MatLinSpace\to\QSym$ contains the \Def{valuative relations} of the form in Equation~\eqref{eq:appvaluative}.
They also show that PI-matroids on $\nd$ elements such that the first operation performed in the construction is adding an isthmus generate the $\nd$th graded piece $\QSym_d$\footnote{The result is in fact finer, see \cite[Theorem A.2]{billera2009quasisymmetric}} \cite[Appendix]{billera2009quasisymmetric}.
We will only use, that the map $\cPsi\colon\MatLinSpace\to\QSym$ is surjective to derive a complete description of the kernel $\ker(\cPsi)$.
Except for more explicit description of the relations in the kernel, this does not add new results, but we think that it is instructive to see similarities and differences in comparison with the word-quasisymmetric functions.

\begin{cor}[{\cite[Theorem A.2]{billera2009quasisymmetric}}, rephrased]
The kernel of $\cPsi$ is spanned by the valuative relations (Equation~\eqref{eq:appvaluative}) and the loop-coloop relations (\Cref{prop:sameqsywithup}).

In this way, the function $\cPsi$ is determined by its value on Schubert matroids $\SM(A)$ for $A\subseteq[\nd]$ with $\nd\in A$.
\end{cor}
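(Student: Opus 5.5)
The plan is to prove the two inclusions separately and then compare dimensions. Let $K_d$ be the subspace of $\Mat_d$ spanned by the valuative relations and the loop-coloop relations in degree $d$.

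\emph{The relations lie in the kernel.} Billera--Jia--Reiner proved that $\cPsi$ is weakly valuative, so \cite[Corollary 3.9]{derksen2010valuative} makes it strongly valuative. Hence every valuative relation of Equation~\eqref{eq:appvaluative} lies in $\ker\cPsi$. Next, replacing a loop by a coloop translates the matroid base polytope, which does not change its normal fan. Since $\cPsi(\matroid)$ depends only on which set compositions are $\matroid$-generic, i.e.\ only on the normal fan, the loop-coloop relations also lie in $\ker\cPsi$. One could alternatively apply $\mathrm{comu}$ to \Cref{prop:loopcolooprelation}, using $\mathrm{comu}\circ\ncPsi=\cPsi$. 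So $K_d\subseteq\ker\cPsi_d$.

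\emph{The quotient has small dimension.} By \Cref{thm:DF5.4_refomulated}, any matroid $\matroid$ on $[d]$ can be written in $\Mat_d/K_d$ as a combination of nested matroids. Turning all loops into coloops, each nested matroid becomes a loopless nested matroid modulo $K_d$. The next step is to pass from loopless nested matroids to Schubert matroids $\SM(A)$ with $d\in A$, i.e.\ paths ending with a north step. This is where the commutative setting differs from the non-commutative one. Isomorphic matroids are not valuatively related in general, so a relabelling relation is needed. I would obtain it by noting that $\cPsi$ is an isomorphism invariant, and read the statement as implicitly including isomorphism relations, since these are automatic in the commutative setting; if the intended span must be literally valuative plus loop-coloop, that extra input is the first thing to verify. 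Granting this, each loopless nested matroid is equivalent to a loopless Schubert matroid by \Cref{thm:doublechains_are_nestedmatroids}. By \Cref{prop:looplessnestedmatroid}(iv), loopless Schubert matroids correspond exactly to the $2^{d-1}$ subsets $A\ni d$. Therefore $\dim \Mat_d/K_d\le 2^{d-1}$.

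\emph{Comparing dimensions.} By \cite[Appendix]{billera2009quasisymmetric}, $\cPsi_d$ is surjective onto $\QSym_d$, which has dimension $2^{d-1}$. Since $K_d\subseteq\ker\cPsi_d$, the map $\Mat_d/K_d\to\QSym_d$ is a surjection from a space of dimension at most $2^{d-1}$ onto a space of dimension exactly $2^{d-1}$. So it is an isomorphism, giving $K_d=\ker\cPsi_d$. It also shows that the values $\cPsi(\SM(A))$ for $d\in A$ form a basis, which is the second sentence of the statement.

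The main obstacle is the middle step: making the reduction from loopless nested matroids to the $2^{d-1}$ Schubert matroids rigorous. This needs the path description of nested matroids, the loop-coloop conversion (which keeps a Schubert representative but may change the path), and the isomorphism issue above.
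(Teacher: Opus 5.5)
Your argument is correct and follows the paper's skeleton. The relations lie in $\ker\cPsi$, the quotient is spanned by the $2^{\nd-1}$ classes of $\SM(A)$ with $\nd\in A$, and surjectivity of $\cPsi$ onto the $2^{\nd-1}$-dimensional $\QSym_\nd$ \cite[Theorem A.1]{billera2009quasisymmetric} then forces equality.

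The difference is in the spanning step. The paper invokes \cite[Theorem 6.3]{derksen2010valuative} to express every matroid directly through Schubert matroids. It then moves $\SM(A)$ with $\nd\notin A$ to $\SM(A^{\uparrow})$ by iterating the $\uparrow$-operation. You instead pass through the labelled nested basis of \Cref{thm:DF5.4_refomulated}, the in-place relation of \Cref{prop:loopcolooprelation}, and an explicit relabelling to a Schubert representative.

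Your route has the merit of exposing an input the paper uses only tacitly, and your suspicion about it is correct: with $\Mat_\nd$ spanned by labelled matroids, isomorphism relations are genuinely needed. For $\nd=3$, all sixteen matroid polytopes have linearly independent indicator functions, so there are no nontrivial valuative relations. The matroids with bases $\{\{1,2\},\{2,3\}\}$ and $\{\{1,3\},\{2,3\}\}$ are isomorphic, so they have equal $\cPsi$. Yet their difference is not in the span of the valuative and loop-coloop relations, whether the loop-coloop relations are taken in the sense of \Cref{prop:loopcolooprelation} or of \Cref{prop:sameqsywithup}.

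In the paper this input is hidden in two places. First, \cite[Theorem 6.3]{derksen2010valuative} is a statement about valuative \emph{invariants}, i.e.\ matroids up to relabelling. Second, the relation between $\SM(A)$ and $\SM(A^{\uparrow})$ combines a loop-to-coloop replacement with a cyclic shift of the labels. So reading the statement with isomorphism relations included is the right interpretation; equivalently, one works with isomorphism classes, as in \cite{billera2009quasisymmetric}. Under that reading your proof is complete.
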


For a subset $A\subseteq[\nd]$ with $\nd\notin A$ we define the \Def{up operation} by
\begin{equation*}
 A^\uparrow\coloneqq\{1\}\cup \{a+1\colon a\in A\} \,.
\end{equation*}
This defines a bijective map between $r$-subsets of $[\nd]$ not containing $\nd$ and $(r+1)$-subsets of $[\nd]$ that do contain $1$.

\begin{prop}\label{prop:sameqsywithup}
If $d\not \in A$ then  $\cPsi(\SM(A))=\cPsi(\SM(A^{\uparrow}))$.
\end{prop}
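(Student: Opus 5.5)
The plan is to factor the passage $\SM(A)\rightsquigarrow\SM(A^{\uparrow})$ into two steps: first trade the loop $\nd$ for a coloop, then relabel the ground set cyclically. The first step is handled by the loop-coloop relation, and the second by the fact that $\cPsi$ is an isomorphism invariant. Neither step changes $\cPsi$.

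\emph{Step 1: $\nd$ is a loop of $\SM(A)$.} Write $A=\{a_1<\dots<a_r\}$ with $\nd\notin A$, so $a_r<\nd$. Every basis $B=\{b_1<\dots<b_r\}$ satisfies $B\leqG A$, hence $b_r\le a_r<\nd$. So $\nd$ lies in no basis. By \Cref{prop:loopcolooprelation}, $\ncPsi(\SM(A))=\ncPsi(\coloop{\SM(A)})$, where $\coloop{\SM(A)}$ has bases $\{B\cup\{\nd\}\colon B\leqG A\}$. Applying $\mathrm{comu}$ and using $\mathrm{comu}\circ\ncPsi=\cPsi$ gives $\cPsi(\SM(A))=\cPsi(\coloop{\SM(A)})$.

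\emph{Step 2: identify $\coloop{\SM(A)}$ with $\SM(A^{\uparrow})$ up to relabeling.} Let $\sigma$ be the permutation of $[\nd]$ with $\sigma(i)=i+1$ for $i<\nd$ and $\sigma(\nd)=1$. I claim that the bases of $\SM(A^{\uparrow})$ are exactly the sets $\sigma(B\cup\{\nd\})=\{1\}\cup\{b+1\colon b\in B\}$ with $B\leqG A$.
\begin{itemize}
\item Write $A^{\uparrow}=\{1<a_1+1<\dots<a_r+1\}$. Any $(r+1)$-set $B'=\{b'_0<\dots<b'_r\}$ with $B'\leqG A^{\uparrow}$ has $b'_0\le 1$, so $b'_0=1$.
\item Write the remaining elements as $b'_i=b_i+1$. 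Then the Gale conditions $b_i+1\le a_i+1$ are exactly $B\leqG A$.
\end{itemize}
This is a direct comparison of the definitions in \eqref{eq:defGaleorder} and \eqref{eq:defSchubertmatroid}. Hence $\SM(A^{\uparrow})=\sigma\bigl(\coloop{\SM(A)}\bigr)$.

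\emph{Step 3: isomorphism invariance of $\cPsi$.} If $\natroid=\sigma(\matroid)$, then $f$ is $\matroid$-generic if and only if $f\circ\sigma^{-1}$ is $\natroid$-generic, since $\sigma$ maps bases to bases and preserves weights. Consequently $\opi$ is $\matroid$-generic if and only if $\sigma(\opi)=(\sigma(\pi_1),\dots,\sigma(\pi_k))$ is $\natroid$-generic. Since $\alpha(\sigma(\opi))=\alpha(\opi)$, the sum \eqref{eq:defqsymmatroid} is unchanged, giving $\cPsi(\matroid)=\cPsi(\natroid)$. Note that this step fails for $\ncPsi$, which is why the statement is only about $\cPsi$. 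Combining the three steps yields $\cPsi(\SM(A))=\cPsi(\SM(A^{\uparrow}))$.

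The main obstacle is Step 2: one must check carefully that the Gale order on $A^{\uparrow}$ forces $1$ into every basis and that the shifted Gale conditions match exactly. The other two steps are formal consequences of earlier results.
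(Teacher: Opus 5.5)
Your proof is correct and is essentially the paper's second proof. There, the map $\opi\mapsto\opi^{\uparrow}$ is exactly your cyclic relabeling $\sigma$, and \cref{lm:up} ($\SM(A^{\uparrow})=\{B^{\uparrow}\colon B\in\SM(A)\}$ with $B^{\uparrow}=\sigma(B\cup\{\nd\})$, together with $f_{\opi^{\uparrow}}(B^{\uparrow})=f_{\opi}(B)+j$ for $\nd\in\pi_j$) does your three steps in one stroke; the additive constant $j$ is the loop-to-coloop translation that you instead import from \Cref{prop:loopcolooprelation} and $\mathrm{comu}\circ\ncPsi=\cPsi$, so the difference is only bookkeeping.
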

We call the relations of the form $\cPsi(\SM(A))-\cPsi(\SM(A^{\uparrow}))\in\ker(\cPsi)$ \Def{loop-coloop relations}.
We will give two proofs for \cref{prop:sameqsywithup}, one elementary and combinatorial and one using the product of the Hopf algebra structures and the fact that quasisymmetric functions are a Hopf algebra homomorphism.

\begin{proof}[First proof of \Cref{prop:sameqsywithup}]
Let us denote by $\SM_{\groundset}(A)$ a Schubert matroid $\matroid$ with ground set $\groundset$ generated from $A$.
Recall the definition of direct product of matroids  in \cref{ssec:prelimMatroids}.
Denote by $\unifMat{r}{[\nd]}$ the uniform matroid of rank $r$ in the ground set $[\nd]$.
Observe that for $A\subseteq[d-1]$
\begin{align*}
 \SM_{[d]}(A^{\uparrow}) &\cong \SM_{[d-1]}(A) \oplus \unifMat{1}{[1]} \, ,\\
 \SM_{[d]}(A) &\cong \unifMat{0}{[1]} \oplus \SM_{[d-1]}(A)  \, .
\end{align*}
So it follows from the multiplicative properties of $\cPsi$, as well as $\cPsi(\unifMat{1}{[1]}) = \cPsi(\unifMat{0}{[1]})=\Mco_{(1)}$, that
\begin{equation*}
\begin{split}
&\cPsi(\SM_{[\nd]}(A^{\uparrow})) = \cPsi(\SM_{[\nd-1]}(A) \oplus \unifMat{1}{[1]}) =  \cPsi(\SM_{[\nd-1]}(A) ) \cdot  \cPsi(\unifMat{1}{[1]})  =  \\
&\cPsi(\SM_{[\nd]}(A) ) \cdot  \cPsi(\unifMat{0}{[1]})   = \cPsi( \unifMat{0}{[1]} \oplus \SM_{[\nd]}(A) ) = \cPsi(\SM_{[\nd]}(A))
\end{split}
\end{equation*}
\end{proof}

We define the up operation for a set composition $\opi=(\pi_1,\dots,\pi_k)\models [\nd]$ by
\begin{equation*}
 \opi^\uparrow\coloneqq (\pi_1\boxplus1,\dots,\pi_k\boxplus1)\,,
\end{equation*}
where for $\pi\subset[\nd]$ we define
\begin{equation*}
 \pi\boxplus1\coloneqq\begin{cases}
             \{p+1 \colon p\in \pi\}\,,&\quad\text{if }\nd\notin\pi\,,\\
             \{1\}\cup\{p+1 \colon p\in \pi\setminus\{\nd\}\}\,,&\quad\text{if }\nd\in\pi\,.
            \end{cases}
\end{equation*}
Recall that $f_{\opi}(b)=j$  for $b\in\pi_j$ and for a subsets $B\subseteq [d]$ we have $f_{\opi}(B)=\sum_{b\in B} f_{\opi}(b)$. We collect some properties of the up operations and then give a second proof for \Cref{prop:sameqsywithup}.
\begin{lm}\label{lm:up}
 Properties of $\uparrow$-operation:
 \begin{enumerate}[a)]
  \item \label{it:finiteup} Let $A\subset[d]$ be a set with $d\notin A$. After a finite number of $\uparrow$-operations the resulting set $A^{{\uparrow}{...}{\uparrow}}$ will contain $d$.
  \item \label{it:SMAup} Let $A\subset[d]$ be a set with $d\notin A$ then   $\SM(A^{\uparrow})=\{B^{\uparrow} \colon B\in\SM(A) \}$.
  \item \label{it:fbup} For a set composition $\opi\models [\nd]$  we have $f_{\opi^{\uparrow}} ( b+1) = f_{\opi}(b) $ for $b\neq d$.
  \item \label{it:fBup} Let $B\subset[d]$ be a set with $d\notin B$, then $f_{\opi^{\uparrow}} ( B^{\uparrow}) = f_{\opi} ( B) + j$ where $d\in\pi_j$.
  \item \label{lm:up_arrow_preserves_properness} Let $A\subset[\nd]$ be such that $\nd\notin A$.
 Then, a set composition $\opi$ is $\SM(A)$-generic if and only if the set composition $\opi^{\uparrow}$ is $\SM(A^{\uparrow})$-generic.
 \end{enumerate}
\end{lm}
\begin{proof}
\begin{description}[style=multiline,leftmargin=2em]
 \item[\ref{it:finiteup}]  If the largest element in $A$ is $k$ then $d-k$ up operations are needed.

 \item[\ref{it:SMAup}]
  Recall that for $A=\{a_1<\dots<a_r\}\subseteq [d]$ the set of bases of the Schubert matroid $\SM(A)$ is
  \begin{align*}
   \SM(A)=\{ \{b_1<\dots<b_r\} \colon b_i\leq a_i \text{ for }i=1,\dots,r \}\,.
  \end{align*}
  Hence if $d\notin A$ then $d\notin B$ for every basis $B\in\SM(A)$ and from
  \begin{align*}
   A^\uparrow=\{1\}\cup  \{a_i+1\colon i=1,\dots,r\} \\
   B^\uparrow=\{1\}\cup \{b_i+1\colon i=1,\dots,r\} \\
  \end{align*}
  the claim follows.

  \item[\ref{it:fbup}] Let  $b\neq d$ then $b+1\in\opi^{\uparrow}_k=\pi_k\boxplus1$ if and only if $b\in\opi_k$.

  \item[\ref{it:fBup}] Note that $1\in\pi_j^\uparrow$ if and only if $d\in\pi_j$. Then we have
  \begin{align*}
   f_{\opi^{\uparrow}}( B^{\uparrow})&= f_{\opi^{\uparrow}}(1)+\sum_{b\in B}f_{\opi^{\uparrow}}(b+1)\\
    &= j+\sum_{b\in B} f_{\opi}(b) = j+ f_{\opi} ( B)\,.
  \end{align*}

  \item[\ref{lm:up_arrow_preserves_properness}]
  Let $\opi$ be a set composition with $d\in\pi_j$ and $A\subseteq [d-1]$.
  Then $\opi$ is $\SM(A)$-generic if and only if $f_{\opi}(B)$ has a unique maximum among the bases $B\in\SM(A)$.
  Because of \ref{it:fBup} this is the case if and only if $f_{\opi^\uparrow}(B^\uparrow)$ has a unique maximum among the bases $B^\uparrow\in\SM(A^\uparrow)$, that is, if and only if  $\opi^\uparrow$ is $\SM(A^\uparrow)$-generic.

\end{description}
\end{proof}

\begin{proof}[Second proof of \Cref{prop:sameqsywithup}]

Recall that we write $\alpha(\opi) $ for the composition type of the set composition $\opi$.
We denote the coefficient of a quasisymmetric function corresponding to a monomial basis element $\Mco_{\alpha}$ as $[\Mco_{\alpha}] $.
It follows from \eqref{eq:defqsymmatroid} that
\begin{align*}
 [\Mco_{\alpha}] \cPsi(\SM(A^{\uparrow})) &= \lvert \{ \opi \, \colon \,  \SM(A^{\uparrow})\text{-generic set composition such that } \alpha(\opi ) = \alpha \} \rvert \,, \\
 [\Mco_{\alpha}] \cPsi(\SM(A)) &= \lvert \{ \opi \, \colon \,  \SM(A)\text{-generic set composition such that } \alpha(\opi ) = \alpha \} \rvert \, .
\end{align*}

Note that \cref{lm:up}\ref{lm:up_arrow_preserves_properness} shows that $\opi$ is $\SM(A)$-generic if and only if $\opi^{\uparrow}$ is $\SM(A^{\uparrow})$-generic.
Furthermore, $\alpha(\opi^{\uparrow}) = \alpha(\opi)$.
This constructs a bijection between the two sets above.
Since the composition $\alpha$ is generic, we conclude that $\cPsi(\SM(A)) = \cPsi(\SM(A^{\uparrow}))$.
\end{proof}

\begin{cor}
 $\QSym_d$ is generated by quasisymmetric functions that come from Schubert matroids defined by sets $A\subset[d]$ that contain $d$.
\end{cor}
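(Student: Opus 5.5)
The statement to prove is the corollary: $\QSym_d$ is generated by the $\cPsi(\SM(A))$ with $d\in A$.

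The plan has two steps.

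\emph{Step 1: every Schubert matroid already suffices.} The first task is to show that the values $\cPsi(\SM(A))$, as $A$ ranges over all subsets of $[d]$, span $\QSym_d$. The paper states that Billera--Jia--Reiner proved surjectivity of $\cPsi\colon\MatLinSpace\to\QSym$, with PI-matroids as generators. Concretely, I would reduce as follows.
\begin{itemize}
\item By surjectivity, every element of $\QSym_d$ is a combination of values $\cPsi(\matroid)$ with $\matroid$ a matroid on $[d]$.
\item Valuativity together with \cref{thm:DF5.4_refomulated} gives $\onebb[\Pol_\matroid]=\sum_j\alpha_j\onebb[\Pol_{\natroid_j}]$ with each $\natroid_j$ nested. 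The argument is the same as in \cref{cor:nestedmatroidsgeneratetheimage}, now applied to $\cPsi$, which is valuative by \cite{billera2009quasisymmetric}.
\item Each nested matroid is isomorphic to some Schubert matroid, since generalized Catalan and nested matroids coincide.
\item $\cPsi$ is an isomorphism invariant, so each $\cPsi(\natroid_j)$ equals $\cPsi(\SM(A))$ for some $A\subseteq[d]$.
\end{itemize}

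\emph{Step 2: remove the sets not containing $d$.} Take $A$ with $d\notin A$. By \cref{prop:sameqsywithup}, $\cPsi(\SM(A))=\cPsi(\SM(A^\uparrow))$. Iterating, \cref{lm:up}\ref{it:finiteup} says that after finitely many $\uparrow$-operations we reach a set $A^{\uparrow\cdots\uparrow}$ containing $d$. Each intermediate step is legitimate, because $\uparrow$ is applied only while $d$ is not yet in the set. Hence every $\cPsi(\SM(A))$ equals $\cPsi(\SM(A'))$ for some $A'$ with $d\in A'$, and the corollary follows.

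The main obstacle is Step 1. One has to be careful that the surjectivity input, and the reduction to nested matroids, really land on Schubert matroids up to isomorphism. If one prefers to avoid Derksen--Fink, \cite[Appendix]{billera2009quasisymmetric} gives PI-matroids directly as generators, and these are again isomorphic to Schubert matroids by \cite[Theorem 3.14]{bonin2003lattice}. Either route then closes via isomorphism invariance of $\cPsi$.
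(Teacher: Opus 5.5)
Your proposal is correct and follows essentially the paper's route. Strong valuativity plus the Derksen--Fink decomposition reduces $\cPsi(\matroid)$ to values on Schubert matroids, and then $\cPsi(\SM(A))=\cPsi(\SM(A^{\uparrow}))$ is iterated until $d\in A$. Your explicit appeals to the surjectivity of $\cPsi$ and to its isomorphism invariance (to pass from the nested matroids of \cref{thm:DF5.4_refomulated} to Schubert matroids) spell out steps the paper leaves implicit when it cites \cite[Theorem 6.3]{derksen2010valuative} directly for Schubert matroids.
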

\begin{proof}
 From \cite[Theorem 6.3]{derksen2010valuative} we know that any indicator function $\onebb[\Pol_\matroid]$ of a matroid $\matroid$ can be uniquely written in terms of indicator functions of Schubert matroids, i.e.,
 \begin{equation*}
 \onebb[\Pol_\matroid] = \sum_{A\subseteq [d]} c_A \onebb[\Pol_\SM(A) ]
 \end{equation*}
 for some coefficients $c_A\in\Z$.
 Since the map $\cPsi\colon\MatLinSpace\to\QSym$ is strongly valuative, we get
  \begin{align*}
  \cPsi(\matroid) &= \sum_{A\subseteq [d]} c_A \cPsi(\SM(A) )
  \end{align*}
  We can now apply \cref{prop:sameqsywithup}:
 \begingroup
\allowdisplaybreaks
 \begin{align*}
  \cPsi(\matroid) &= \sum_{A\subseteq [d]} c_A \cPsi(\SM(A) )\\
      &= \sum_{\substack{A\subseteq [d-1]}} c_A \cPsi(\SM(A) ) + \sum_{\substack{A\subseteq [d]\\ d\in A }} c_A \cPsi(\SM(A) ) \\
      &=\sum_{\substack{A\subseteq [d-1]}} c_A \cPsi(\SM(A^{\uparrow}) ) + \sum_{\substack{A\subseteq [d]\\ d\in A }} c_A \cPsi(\SM(A) ) \\
      &=\sum_{\substack{A\subseteq [d-2]}} c_A \cPsi(\SM(A^{\uparrow}) ) +\sum_{\substack{A\subseteq [d-1]\\ d-1\in A}} c_A \cPsi(\SM(A^{\uparrow}) ) + \sum_{\substack{A\subseteq [d]\\ d\in A }} c_A \cPsi(\SM(A) ) \\
      &=\sum_{\substack{A\subseteq [d-2]}} c_A \cPsi(\SM(A^{\uparrow\uparrow}) ) +\sum_{\substack{A\subseteq [d-1]\\ d-1\in A}} c_A \cPsi(\SM(A^{\uparrow}) ) + \sum_{\substack{A\subseteq [d]\\ d\in A }} c_A \cPsi(\SM(A) ) \\
      &=...
 \end{align*}
 \endgroup
 Note, if $A\subset[d-1]$ with $d-1\in A$ then $d\in A^{\uparrow}$.
 After finitely many iterations, we will obtain an expression of $\cPsi(\matroid)$ in terms of $ \cPsi(\SM(A))$ such that $d\in A$.
\end{proof}

Note that $\QSym_d$ has a basis indexed by compositions of $[d]$, there are $2^{d-1}$ compositions of $[d]$ and hence $\QSym_d$ has dimension $2^{d-1}$.
\begin{cor}\label{claim}
 The set  $\{\cPsi(\SM(A))\colon A\subset[\nd], d\in A\}$ is linearly independent.
\end{cor}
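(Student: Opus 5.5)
The quickest route is a dimension count combined with the surjectivity of $\cPsi$ that the paper takes as given from Billera--Jia--Reiner. First count the family: the sets $A\subseteq[\nd]$ with $\nd\in A$ correspond bijectively to subsets of $[\nd-1]$, via $A\mapsto A\setminus\{\nd\}$. So there are exactly $2^{\nd-1}$ of them, which equals $\dim\QSym_\nd$, the number of compositions of $\nd$.

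Next, recall that $\cPsi\colon\MatLinSpace_\nd\to\QSym_\nd$ is surjective. This is the statement quoted from \cite[Appendix]{billera2009quasisymmetric}, which the paper adopts. By the preceding Corollary, $\QSym_\nd$ is spanned by the $2^{\nd-1}$ functions $\cPsi(\SM(A))$ with $\nd\in A$. That argument used strong valuativity, \cite[Theorem 6.3]{derksen2010valuative}, and repeated application of \Cref{prop:sameqsywithup}.

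Now conclude by linear algebra: a spanning set of a $2^{\nd-1}$-dimensional vector space with exactly $2^{\nd-1}$ elements is a basis. Hence the functions are pairwise distinct and linearly independent.

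The main obstacle is that this argument leans entirely on surjectivity, which is imported rather than proved. If a self-contained argument is wanted, I would mimic the triangularity proof of \cref{thm:lower_bound_specific}.

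For the self-contained version, index columns by compositions $\alpha\models\nd$ and rows by sets $A\ni\nd$. Order the sets by size and then by a linear extension of Gale order. For each $A$ take the composition $\alpha(\opi_{A'})$, where $\opi_{A'}$ is the set composition of \Cref{lm:lower_bound_upper_triangular} for a suitably chosen representative $A'$. The coefficient $[\Mco_\alpha]\cPsi(\SM(A))$ counts $\SM(A)$-generic set compositions of type $\alpha$.

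The hardest step would be showing this count matrix is triangular with nonzero diagonal modulo a simple row operation. I would first test the choice $A'=A$. I would then try to show, using \Cref{lm:piTindependent} and the greedy algorithm, that every set composition of type $\alpha(\opi_T)$ is generic for $\SM(A)$ whenever $A$ precedes $T$. That would give full counts $\binom{\nd}{\alpha}$ above the diagonal and a strictly smaller count on the diagonal. Subtracting the row of $\SM([\nd])$, whose counts are all the full multinomials, would then produce a triangular matrix.

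Since the statement allows citing earlier results, I would present the dimension-count proof and mention the triangular approach only as a remark.
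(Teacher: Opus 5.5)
Your main argument is correct and is the paper's proof: the preceding corollary together with the surjectivity of $\cPsi$ from \cite{billera2009quasisymmetric} makes the $2^{\nd-1}$ functions $\cPsi(\SM(A))$ with $\nd\in A$ a spanning set of the $2^{\nd-1}$-dimensional space $\QSym_\nd$, hence a basis (you even make explicit the spanning step that the paper leaves implicit). Drop the triangular remark, though: every $\opi_T$ has type $(1,\dots,1,2,1,\dots,1)$, or $(1,\dots,1)$ when $T=[\nd]$, so $\alpha(\opi_{A'})$ depends only on $\lvert A'\rvert$ and gives at most $\nd$ distinct compositions, far too few to index the $2^{\nd-1}$ columns you would need.
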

\begin{proof}
This follows from the surjectivity of the map $\cPsi\colon\MatLinSpace\to\QSym$ \cite[Theorem  A.1]{billera2009quasisymmetric} and
the fact that there are $2^{d-1}$ compositions of $d$, hence the $\QSym_d$ has dimension $2^{d-1}$
as well as the fact the number of subsets $A\subseteq [d]$ that contain $d$ is also $2^{d-1}$.

\end{proof}

\section{Hopf monoid morphism}\label{sec:hopfmonoid}

This paper deals with the chromatic word-quasisymmetric function of matroids $\ncPsi$, which turns out to be a Hopf monoid morphism.
This map is realisable from a construction in \cite[Section 5.4]{penaguiao2020algebraic}.

In this section we present a self contained proof that the map $\ncPsi $ is a Hopf monoid morphism.

\begin{thm}\label{thm:ncpsi_is_hopfmorphism}
The map $\ncPsi: \Mat \rightarrow \WQSym$ is a Hopf monoid morphism.
Further, the corresponding map between Hopf algebras is a Hopf algebra morphism.
\end{thm}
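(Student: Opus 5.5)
We work in the species (Hopf monoid) setting of \cite{aguiar2010monoidal}. For a finite set $\groundset$, $\Mat[\groundset]$ is spanned by matroids on $\groundset$ and $\WQSym[\groundset]$ by the monomial elements $\Mnco_{\otau}$, $\otau\models\groundset$. Define $\ncPsi_\groundset(\matroid)=\sum_{\opi \text{ $\matroid$-generic}}\Mnco_{\opi}$.

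\textbf{Naturality.} For a bijection $\sigma\colon\groundset\to\othergroundset$, relabeling the matroid relabels its bases and its generic set compositions. Hence $\ncPsi$ is a morphism of species.

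We recall the structure maps on the $\WQSym$ side. The product is $\Mnco_{\opi}\cdot\Mnco_{\otau}=\sum\Mnco_{\osigma}$, summed over the quasi-shuffles $\osigma\models\groundset\sqcup\othergroundset$ whose restrictions to $\groundset$ and $\othergroundset$ are $\opi$ and $\otau$. The coproduct $\Delta_{S,T}\Mnco_{\osigma}$ equals $\Mnco_{\osigma|_S}\otimes\Mnco_{\osigma|_T}$ if $\osigma$ has an initial segment of blocks whose union is $T$, and $0$ otherwise. I will fix the orientation so that later blocks correspond to larger weights. The unit and counit are the obvious ones.

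\textbf{Product.} For $\matroid$ on $\groundset$ and $\natroid$ on $\othergroundset$, the bases of $\matroid\oplus\natroid$ are the sets $B_1\cup B_2$, and $f(B_1\cup B_2)=f|_\groundset(B_1)+f|_\othergroundset(B_2)$. So $f$ has a unique maximizing basis if and only if $f|_\groundset$ is $\matroid$-generic and $f|_\othergroundset$ is $\natroid$-generic. In terms of set compositions, $\osigma$ is $(\matroid\oplus\natroid)$-generic exactly when $\osigma|_\groundset$ and $\osigma|_\othergroundset$ are generic. Every such $\osigma$ arises in exactly one way as a quasi-shuffle of its two restrictions, which gives $\ncPsi(\matroid\oplus\natroid)=\ncPsi(\matroid)\ncPsi(\natroid)$.

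\textbf{Coproduct.} This is the main obstacle. The matroid coproduct is $\Delta_{S,T}(\matroid)=\matroid|_S\otimes\matroid/S$ for $\groundset=S\sqcup T$, so we need
\begin{equation*}
\sum_{\substack{\osigma \text{ $\matroid$-generic}\\ \osigma=\osigma'\cdot\osigma'',\ \osigma'\models T,\ \osigma''\models S}}\Mnco_{\osigma|_S}\otimes\Mnco_{\osigma|_T} \;=\; \ncPsi(\matroid|_S)\otimes\ncPsi(\matroid/S),
\end{equation*}
with the convention that the elements of $S$ carry the larger weights. The orientation (initial versus final segment, restriction versus contraction) must be chosen consistently with the product convention, and I will adjust the definition of $\Delta$ on $\WQSym$ if necessary.

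The key claim is a greedy-algorithm lemma. Suppose every element of $S$ has strictly larger weight than every element of $T$. Then the $f$-maximal bases of $\matroid$ are exactly the sets $B_1\cup B_2$ with $B_1$ an $f|_S$-maximal basis of $\matroid|_S$ and $B_2$ an $f|_T$-maximal basis of $\matroid/S$. To prove it, recall that maximal bases are produced by running the greedy algorithm along a linear extension of the weight order. Because all of $S$ is processed first, the algorithm first picks a maximal basis of $\matroid|_S$ and then greedily extends it in $\matroid/S$. Conversely, any maximal basis of $\matroid$ must meet $S$ in a basis of $\matroid|_S$: the weight gap lets one exchange an element of $T$ for an element of $S$ and strictly increase the weight. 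Equivalently, the face $\Pol_\matroid^{\vy}$ for $\vy=\onebb_S$ is $\Pol_{\matroid|_S}\times\Pol_{\matroid/S}$, and the lexicographic tie-breaking reduces to this face.

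Uniqueness is where care is needed. The maximum for $\matroid$ is unique if and only if both maxima are unique. This gives the bijection between generic $\osigma$ of the split form and pairs of generic compositions, and hence the coproduct identity.

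\textbf{Remaining checks.} Counit and unit compatibility are immediate: the empty matroid maps to $\Mnco_{\emptyset}$. Finally, the Hopf algebra statement follows by applying the Fock functor $\mathcal K$ of \cite{aguiar2010monoidal}, which sends morphisms of Hopf monoids to morphisms of graded Hopf algebras. Under this functor $\mathcal K(\Mat)=\MatLinSpace$ and $\mathcal K(\WQSym)=\WQSym$, the latter using the full Fock functor, since $\WQSym$ is not an isomorphism-invariant object.
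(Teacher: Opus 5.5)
Your proof is correct. Its skeleton matches the paper's: product via restrictions of set compositions, coproduct via a face-splitting lemma, unit and counit, then the full Fock functor.

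The difference is how you get the coproduct lemma. The paper factors $\ncPsi=\ncPsi_{\mathtt{GPer}}\circ\mathrm{rank}$ through generalized permutahedra. It then quotes Aguiar--Ardila's identity $\Pol^{\opi}=\mu_{\pi_1,\ldots,\pi_k}\circ\Delta_{\pi_1,\ldots,\pi_k}(\Pol)$ together with (co)associativity. You prove only the two-block case, directly for matroids, by the greedy algorithm and a basis exchange. The statement you prove is that a weight making every element of $S$ strictly heavier than every element of $T$ is maximized inside $\Pol_{\matroid}^{\onebb_S}=\Pol_{\matroid|_S}\times\Pol_{\matroid/S}$. The paper's route yields the more general morphism $\ncPsi_{\mathtt{GPer}}$ on all generalized permutahedra, at the price of two citations. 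Yours is self-contained, and it forces you to fix the orientation explicitly.

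Your orientation is the consistent one for the main-text genericity ($f_{\opi}$ increasing along blocks, maximum weight). The initial segment $T$ is light and pairs with $\matroid/S$; the final segment $S$ is heavy and pairs with $\matroid|_S$. The paper's \cref{lm:coproductp1-hopfmonoid} instead pairs $A<_{\opi}B$ with $\Pol|_A\times\Pol/_A$, the face maximizing $\onebb_A$. That is correct only if earlier blocks are heavier. Under the main-text convention it fails: for $\unifMat{1}{3}$ and $\opi=1|23$, both $\Pol|_{\{1\}}$ and $\Pol/_{\{1\}}$ are points, yet $\opi$ is not generic.

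So the adjustment you anticipate is genuinely needed. Your coproduct on $\WQSym$ is the co-opposite of the deconcatenation written in the paper's proof. This is harmless, because $\opi\mapsto\opi^{\mathrm{rev}}$ preserves the quasi-shuffle product and intertwines the two coproducts, so it is a Hopf monoid isomorphism. Equivalently, you could keep the paper's coproduct and use minimum instead of maximum weight.
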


Let us turn to generalised permutahedra to establish this result.
We consider a generalised permutahedron $\Pol \in \R A$.
For a direction $\vy \in \R^A$, let $\Pol^{\vy}$ denote the maximum face of $\Pol$ in direction $\vy$.
For a set composition $\opi \models A$, let $\Pol^{\opi} = \Pol^{\vy_{\opi}}$, where $\vy_{\opi} $ is a direction in the open braid cone defined by the set composition $\opi$ in the braid fan.

For a generalized permutahedron $\Pol$ and a set composition $\opi$, we say that a set composition $\opi$ is $\Pol$-generic when the face $\Pol^{\opi}$ is a vertex.

Define, for a generalised permutahedron $\Pol$, the chromatic word-quasisymmetric function
\begin{equation*}\ncPsi_{\mathtt{GPer}}(M) \coloneqq \sum_{\substack{\opi \models I \\ \opi \text{ is $\Pol$-generic} }} \Mnco_{\opi} \, . \end{equation*}

First we note that $\ncPsi = \ncPsi_{\mathtt{GPer}} \circ \mathrm{rank}$, where $\mathrm{rank}: \mathtt{Mat} \rightarrow \mathtt{GPer}$ is the usual function mapping a matroid to its matroid base polytope.
This was shown to be a Hopf monoid morphism in \cite[Proposition 14.3]{aguiar2023hopf}.
To show \cref{thm:ncpsi_is_hopfmorphism}, it suffices therefore to show that $\ncPsi_{\mathtt{GPer}}$ is a Hopf monoid morphism.

\begin{lm}\label{lm:product-hopfmonoid}
Let  $A$, $B$ be two  disjoint sets and let $\Pol\subseteq\R A$, $\Qol\subseteq\R B$ be generalised permutahedron.
A set composition $\opi\models A\sqcup B$ is $\Pol\times \Qol$-generic if and only if 
$\opi|_A$ is $\Pol$-generic and $\opi|_B$ is $\Qol$-generic. 
\end{lm}
\begin{proof}
Recall that
\begin{equation*}
 (\Pol\times\Qol)^\vy = \Pol^{\vy|_A}\times\Qol^{\vy|_B}\,.
\end{equation*}
Now let $\vy\in\R^{A\sqcup B}$ be some direction in the open cone defined by the set composition $\opi$ in the braid fan.
Then $\vy|_A \in\R^A$ (resp.$\vy|_B \in\R^B$)  lies in the open braid cone associated to the set composition $\opi|_A$ (resp. $\opi|_B$).
Since a Cartesian product of polytopes is a point if and only if the two factors are points, the claim follows.
\end{proof}

Let us now introduce some auxiliary notation.
For two disjoint sets $I$ and $J$ and set compositions $\otau = (\tau_1, \ldots, \tau_l) \models I $ and $ \osigma = (\sigma_1, \ldots, \sigma_m) \models J$, we define $\otau\oplus \osigma \models I \sqcup J$ as the set composition $(\tau_1, \ldots, \tau_l, \sigma_1, \ldots, \sigma_m)$.

If $a, b \in I$, we say that $  a <_{\otau } b$ if $a\in \tau_i$, $b\in \tau_j$ with $i < j$.
For two sets $A, B\subseteq I $ we say that $  A <_{\otau } B$ if $ a <_{\otau } b$ for every $a \in A, b\in B$.

For $A, B$ disjoint sets and $\Pol$ a generalised permutahedron in $\R {(A \sqcup B)}$, recall that $\Pol|_A \times \Pol/_A\subseteq \R A \times \R B$ is the face of the generalized permutahedron $\Pol$ that maximizes $\sum_{i\in A} \vx_i $.
In that way, $\Pol|_A $ is a generalized permutahedron in $\R A $ and $\Pol/_A$ is a generalized permutahedron in $\R B$.

\begin{lm}\label{lm:coproductp1-hopfmonoid}
Let $A, B$ be disjoint sets and let $\Pol$ be a generalised permutahedron in $\R {(A \sqcup B)}$.
For a  set composition $\opi \models A\sqcup B$ such that $A <_{\opi } B $ and define $\otau  = \opi|_A$ and $\osigma  = \opi|_B$.
Then $\opi $ is $\Pol$-generic if and only if $\otau$ is $\Pol|_A$-generic and $\osigma$ is $\Pol/_A$-generic.

Similarly, for every set compositions $\otau \models A, \osigma \models B$ such that $\otau$ is $\Pol|_A$-generic and $\osigma$ is $\Pol/_A$-generic, the set composition $\opi = \otau \oplus \osigma$ satisfies $A <_{\opi } B $, $\otau  = \opi|_A$, $\osigma  = \opi|_B$, and $\opi$ is $\Pol$-generic.
\end{lm}

\begin{proof}
Write $\opi = (\pi_1 ,  \ldots,  \pi_k)$.
Since  $A <_{\opi } B $, there is some $j \in \{0, \ldots , k\}$ such that $ A = \bigcup_{i=1}^j \pi_i$.
Then $\otau = \opi|_A = (\pi_1 ,  \ldots,  \pi_j)$, $ \osigma =\opi|_B = (\pi_{j+1} ,  \ldots,  \pi_k)$ and $\opi = \otau \oplus \osigma$.

In \cite[Proposition 5.4.]{aguiar2023hopf}, it is shown that
\begin{equation*}
P^{\opi} = \mu_{\pi_1, \ldots, \pi_k} \circ\Delta_{\pi_1, \ldots , \pi_k} (P)\, ,
\end{equation*}
where $\Delta_{\pi_1, \ldots , \pi_k}$ denotes the higher coproduct map (resp. $ \mu_{\pi_1, \ldots, \pi_k}$ the higher product map), i.e., the maps obtained by iterating the coproduct map $\Delta$ (resp. product map $\mu$).
See, e.g., \cite[Section 2.6]{aguiar2023hopf} for details.

It follows that 
\begin{equation*}
\begin{split}
P^{\opi} &= \mu_{\pi_1, \ldots, \pi_k} \circ \Delta_{\pi_1, \ldots , \pi_k} (P)  \\
 &= \mu_{A, B} \circ (\mu_{\pi_1, \ldots, \pi_j} \otimes \mu_{\pi_{j+1}, \ldots, \pi_k}) \circ (\Delta_{\pi_1, \ldots , \pi_j} \otimes \Delta_{\pi_{j+1}, \ldots, \pi_k}) \circ  \Delta_{A, B} (P)\\
 &= \mu_{A, B} \circ (\mu_{\pi_1, \ldots, \pi_j} \otimes \mu_{\pi_{j+1}, \ldots, \pi_k}) \circ (\Delta_{\pi_1, \ldots , \pi_j} \otimes \Delta_{\pi_{j+1}, \ldots, \pi_k})(P|_A \otimes P/_A ) \\
 &=  \mu_{A, B} ( (P|_A)^{\otau} \otimes (P/_A)^{\osigma} )  ) =  (P|_A)^{\otau} \times (P/_A)^{\osigma}
\end{split}
\end{equation*}
on the second equation we used the associativity and coassociativity properties.

It thus follows that $P^{\opi} $ is a vertex if and only if both $(P|_A)^{\otau} $ and $(P/_A)^{\osigma}$ are a vertex, concluding the first part of the claim.
The second part follows from the definition of $\oplus $ as well as the first part of this claim.
\end{proof}

\begin{prop}
The map $\ncPsi_{\mathtt{GPer}}: \mathtt{GPer} \rightarrow \WQSym$ is a Hopf monoid morphism.
\end{prop}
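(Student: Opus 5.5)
We need to show that $\ncPsi_{\mathtt{GPer}}$ is compatible with the product and the coproduct of the two Hopf monoids. The unit and counit conditions are immediate: the empty polytope (the point in $\R\emptyset$) is generic for the unique empty set composition, so it maps to the unit $\Mnco_{()}$. We use the Hopf monoid structure on $\WQSym$ in the monomial basis.
\begin{itemize}
\item The product is the quasi-shuffle of set compositions: $\mu_{A,B}(\Mnco_{\otau}\otimes\Mnco_{\osigma}) = \sum_{\opi} \Mnco_{\opi}$. The sum runs over all $\opi\models A\sqcup B$ with $\opi|_A=\otau$ and $\opi|_B=\osigma$.
\item The coproduct is deconcatenation: $\Delta_{A,B}(\Mnco_{\opi}) = \Mnco_{\opi|_A}\otimes\Mnco_{\opi|_B}$ if $A<_{\opi}B$, and $0$ otherwise.
\end{itemize}

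For the product, take $\Pol\subseteq\R A$ and $\Qol\subseteq\R B$, whose product in $\mathtt{GPer}$ is $\Pol\times\Qol$. Then
\begin{equation*}
\ncPsi_{\mathtt{GPer}}(\Pol)\cdot\ncPsi_{\mathtt{GPer}}(\Qol)=\sum_{\substack{\otau \text{ $\Pol$-generic}\\ \osigma \text{ $\Qol$-generic}}}\ \sum_{\substack{\opi\models A\sqcup B\\ \opi|_A=\otau,\ \opi|_B=\osigma}}\Mnco_{\opi}.
\end{equation*}
Each $\opi\models A\sqcup B$ occurs at most once, namely with $\otau=\opi|_A$ and $\osigma=\opi|_B$. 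So the right-hand side equals the sum of $\Mnco_{\opi}$ over those $\opi$ with $\opi|_A$ $\Pol$-generic and $\opi|_B$ $\Qol$-generic. By \cref{lm:product-hopfmonoid}, this is exactly the sum over $\Pol\times\Qol$-generic $\opi$, which is $\ncPsi_{\mathtt{GPer}}(\Pol\times\Qol)$.

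For the coproduct, take $\Pol\subseteq\R(A\sqcup B)$, whose coproduct is $\Pol|_A\otimes\Pol/_A$. Applying deconcatenation gives
\begin{equation*}
\Delta_{A,B}\,\ncPsi_{\mathtt{GPer}}(\Pol)=\sum_{\substack{\opi \text{ $\Pol$-generic}\\ A<_{\opi}B}}\Mnco_{\opi|_A}\otimes\Mnco_{\opi|_B}.
\end{equation*}
The map $\opi\mapsto(\opi|_A,\opi|_B)$ is a bijection from set compositions with $A<_{\opi}B$ to pairs $(\otau,\osigma)$, with inverse $(\otau,\osigma)\mapsto\otau\oplus\osigma$. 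By \cref{lm:coproductp1-hopfmonoid} (both directions), this bijection restricts to $\Pol$-generic $\opi$ on one side and pairs with $\otau$ $\Pol|_A$-generic and $\osigma$ $\Pol/_A$-generic on the other. The sum therefore equals $\ncPsi_{\mathtt{GPer}}(\Pol|_A)\otimes\ncPsi_{\mathtt{GPer}}(\Pol/_A)$.

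Naturality under relabelings of the ground set is clear, since genericity is preserved by bijections. The only delicate point is fixing the conventions of the Hopf monoid structure on $\WQSym$ so that they match the ones used in \cite{aguiar2023hopf}, namely quasi-shuffle for the product and deconcatenation for the coproduct. Once that is settled, both computations reduce directly to the two lemmas above.
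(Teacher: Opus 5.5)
Your proposal is correct and is essentially the paper's proof: you expand in the monomial basis, use the quasi-shuffle product with \cref{lm:product-hopfmonoid}, use the deconcatenation coproduct with the bijection $\opi\mapsto(\opi|_A,\opi|_B)$ and \cref{lm:coproductp1-hopfmonoid}, and check unit and counit directly. The convention you flag is indeed the one delicate point: \cref{lm:coproductp1-hopfmonoid} requires the earlier blocks of $\opi$ to carry the larger weights, so that $\Pol^{\opi}$ lies in the $\onebb_A$-maximal face $\Pol|_A\times\Pol/_A$, and the paper relies on the same orientation there.
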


\begin{proof}

We establish that the map $\ncPsi$ is a Hopf monoid morphism by showing that it preserves the product, coproduct, unit and counit.

To show that it preserves the \textbf{product}, take disjoint sets $A$, $B$ and let $\Pol$, $\Qol$ be generalised permutahedra in $\R A, \R B$, respectively.
We use the product structure on the $\{\Mnco_{\opi}\}$ basis described in \cite[Section 5.3, Equation (5.4)]{penaguiao2020algebraic} and note that
\begin{align*}
 \ncPsi(\Pol) \cdot \ncPsi(\Qol) &= \sum_{\substack{\otau \models A \\ \otau \text{ is $\Pol$-generic}}}\sum_{\substack{\osigma \models B \\ \osigma \text{ is $\Qol$-generic}}} \Mnco_{\otau} \cdot \Mnco_{\osigma}\\
 &= \sum_{\substack{\otau \models A \\ \otau \text{ is $\Pol$-generic}}}\sum_{\substack{\osigma \models B \\ \osigma \text{ is $\Qol$-generic}}} \sum_{\substack{\opi \models A\sqcup B \\ \opi|_A = \otau\\ \opi|_B = \osigma}} \Mnco_{\opi} \\
 &= \sum_{\substack{\opi \models A\sqcup B }} \Mnco_{\opi} \sum_{\substack{\otau \models A \\ \otau \text{ is $\Pol$-generic}\\ \opi|_A = \otau}}\sum_{\substack{\osigma \models B \\ \osigma \text{ is $\Qol$-generic}\\ \opi|_B = \osigma}}  1 \\
 &= \sum_{\substack{\opi \models A\sqcup B \\ \opi|_A  \text{ is $\Pol$-generic}\\\opi|_B  \text{ is $\Qol$-generic}}} \Mnco_{\opi}  \\
& = \sum_{\substack{\opi \models A\sqcup B \\ \opi \text{ is $\Pol\times \Qol$-generic}}} \Mnco_{\opi}
\quad=\quad\ncPsi(\Pol\oplus \Qol)\, .
\end{align*}
where the last equality follows from \cref{lm:product-hopfmonoid}.

To show that it preserves the \textbf{coproduct}, let $A, B$ be disjoint sets, and $\Pol$ be generalized permutahedron in $\R{(A\sqcup B)}$.
The coproduct on the $\{\Mnco _{\opi}\}$ basis is defined in \cite[Section~$5.3$]{penaguiao2020algebraic}, hence we can compute:
\begin{equation*}
\Delta_{A, B }(\ncPsi(\Pol)) = \sum_{\substack{\opi \models A\sqcup B \\ \opi \text{ is $\Pol$-generic}}} \Delta_{A, B }\Mnco_{\opi} = \sum_{\substack{\opi \models A\sqcup B \\ \opi \text{ is $\Pol$-generic}\\ A <_{\opi} B}}\Mnco_{\opi|_A} \otimes \Mnco_{\opi|_B} \, .
\end{equation*}

From \cite{aguiar2023hopf}, the coproduct structure in $\mathtt{GPer}$ is as follows:
\begin{equation*}\ncPsi \otimes \ncPsi (\Delta_{A, B } \Pol) = \ncPsi \otimes \ncPsi (\Pol|_A \otimes \Pol/_A)  =  \sum_{\substack{\otau \models A \\ \otau \text{ is $\Pol|_A$-generic}}} \sum_{\substack{\osigma \models B \\ \osigma \text{ is $\Pol/_A$-generic}}} \Mnco_{\otau} \otimes \Mnco_{\osigma } \, , \end{equation*}
these two expressions are the same again due to \cref{lm:coproductp1-hopfmonoid}.

To show that $\ncPsi$ preserves unit and counit is a direct but tedious application of the definition.
We also remark that on the ground set $E = \emptyset $, the map $\ncPsi $ preserves the antipode trivially.
\end{proof}

Finally, since $\ncPsi: \mathtt{Mat} \rightarrow \WQSym$ is a Hopf monoid morphism, from \cite[Proposition 3.10]{aguiar2010monoidal} we get that the underlying map between Hopf algebras is a Hopf algebra morphism, as desired in \Cref{thm:ncpsi_is_hopfmorphism}.

\section{Matroidal double chains are nested matroids}\label{appendix:DCarenested}

In \cite{derksen2010valuative} it was shown matroidal double chains play a central role in valuative functions.
We make the relation between matroidal double chains and nested matroids explicit in this section, by first defining a matroid from a matroidal double chain (see \cref{thm:doublechains_are_matroids}).
We show that such matroids exist and are well defined, by giving an explicit \cref{constr:path} for the corresponding Schubert matroid that is isomorphic to it, see \cref{lm:paths_and_intervals}.
We further show that matroidal double chains uniquely define  nested matroids, and that every nested matroid has a representation as a matroidal double chain.
Finally we present an enumerative result, recovering the result for loopless nested matroids from \cite{hampe_intersection_2017}.

For convenience we recall the definition of matroidal double chains:
A pair of chains $(\uX, \ur)$, where $\uX~=~(X_1,~\ldots,~X_k~=~\groundset~)$ is a series of sets and $\ur = (r_1, \ldots, r_k)$ a series of integers, is called a \Def{matroidal double chains} if it satisfies the following properties:
\begin{enumerate}[(a)]
\item\label{it:DCsetinclusionappendix} $\emptyset \subsetneq X_1 \subsetneq \cdots \subsetneq X_k = \groundset $

\item\label{it:rankinequalitiesappendix} $0 \leq r_1 < \cdots < r_k = r$.

\item\label{it:DcRanktogetherappendix} $0 < |X_1| - r_1 < \cdots <|X_{k-1}| - r_{k-1} \leq |X_k| - r_k = |\groundset | - r$.
\end{enumerate}
The associated polytope was defined as
\begin{equation}\label{eq:defpolofdoublechainappendix}
    \Pol(\uX, \ur)\coloneqq \{\bx\in[0,1]^d\colon \sum_{i=1}^d \bx_i =r\,,  \sum_{i\in X_j}\bx_i \leq r_j \forall j\}\,.
\end{equation}

\begin{thm}\label{thm:doublechains_are_matroids}
Let $(\uX, \ur)$ be a matroidal double chain.
Then there exists a (necessarily unique) matroid $\matroid \coloneqq \matroid(\uX, \ur)$ isomorphic to a Schubert matroid such that for the matroid base polytope $\Pol_\matroid = \Pol(\uX, \ur)$ holds.
\end{thm}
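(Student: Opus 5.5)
Given a matroidal double chain $(\uX,\ur)$, I will build a total order of the ground set and a NE-path (the forthcoming \cref{constr:path}), and then show that $\Pol(\uX,\ur)$ is the image of the Schubert matroid polytope $\Pol_{\SM(\Path)}$ under the relabeling given by that order. Uniqueness of the matroid is then automatic: a matroid is determined by its base polytope, because the vertices of a matroid base polytope are exactly the indicator vectors of its bases.

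For the construction, set $X_0=\emptyset$ and $r_0=0$, and write $D_j=X_j\setminus X_{j-1}$. These blocks have sizes $n_j=|X_j|-|X_{j-1}|$ and rank increments $\rho_j=r_j-r_{j-1}$. Condition (c) gives $e_j:=n_j-\rho_j>0$ for $j<k$, with $e_k\geq 0$. Condition (b) gives $\rho_j>0$ for $j\geq 2$, while $\rho_1=r_1\geq 0$. I label the elements so that $X_k\setminus X_{k-1}$ comes first, then $X_{k-1}\setminus X_{k-2}$, and so on down to $X_1$ last; equivalently, $X_j$ becomes a terminal segment of the order. The path $\Path$ is the concatenation, for $j=k,k-1,\dots,1$, of the block $N^{\rho_j}E^{e_j}$. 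This reading is forced by the Schubert matroid $\SM(A)$, whose rank constraints say that terminal segments $\{m+1,\dots,\nd\}$ have rank at most the number of $N$-steps of $A$ in that segment. Concretely, $B\leqG A$ holds if and only if $|B\cap\{m+1,\dots,\nd\}|\le |A\cap\{m+1,\dots,\nd\}|$ for every $m$. In the labeled version, $A$ is the set of positions of the $N$ steps.

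The key step is the polytope identity. The Gale condition above gives
\[\Pol_{\SM(A)}=\Bigl\{\vx\in[0,1]^\nd : \textstyle\sum_i \vx_i=r,\ \sum_{i>m}\vx_i\le |A\cap\{m+1,\dots,\nd\}| \ \forall m\Bigr\},\]
because this polytope is integral. Integrality holds since it is a polymatroid-type polytope cut out by a chain of sets, and the constraint matrix of a laminar family together with the box is totally unimodular. Its integer points are exactly the Gale-smaller sets. With my choice of $A$, the constraint at $m=\nd-|X_j|$ is exactly $\sum_{X_j}\vx_i\le r_j$.

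The main obstacle is showing that the remaining constraints, at $m$ inside a block, are redundant given the block constraints and $0\le\vx\le1$. Inside the block for $D_j$, the terminal segment consists of $X_{j-1}$ together with part of $D_j$. Its bound is $r_{j-1}$ plus the number of $N$-steps so far within that part, read from the end. Since each block is written $N^{\rho_j}E^{e_j}$, reading from the end means the $E$ steps come first. The bound therefore stays at $r_{j-1}$, which is already forced by the $X_{j-1}$ constraint, or it increases by one per element, which is already forced by $\vx_i\le 1$. This is exactly why the $N$ steps must precede the $E$ steps within each block. I would then check that $\Path$ has $r$ north steps, so that it lies in $\NEP(\nd,r)$. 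Finally, I would note that the strict inequalities in (b) and (c) are not needed for this theorem; they only matter for the later statement that distinct double chains give distinct matroids.
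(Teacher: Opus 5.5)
The step you flag as the main obstacle is where the argument breaks, because the order of the steps inside each block is reversed. With block word $N^{\rho_j}E^{e_j}$, the north steps sit at the \emph{smallest} positions of $D_j$. So a terminal segment $X_{j-1}\cup S$ with $\emptyset\neq S\subseteq D_j$ and $|S|\le e_j$ gets the bound $|A\cap(X_{j-1}\cup S)|=r_{j-1}$. The inequality $\sum_{i\in X_{j-1}\cup S}\vx_i\le r_{j-1}$ is strictly stronger than $\sum_{i\in X_{j-1}}\vx_i\le r_{j-1}$, since the coordinates in $S$ can be positive. It is therefore not redundant, and $\Pol_{\SM(A)}$ is in general a proper subpolytope of $\Pol(\uX,\ur)$.

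Already for $k=1$, $X_1=[\nd]$, $0<r<\nd$, this fails. Here $\Pol(\uX,\ur)=\hypersimplex{r}{\nd}$, but your word $N^rE^{\nd-r}$ gives $A=\{1,\dots,r\}$, the point Schubert matroid; the offending constraint at $m=\nd-1$ is $\vx_\nd\le 0$. Likewise, for $\{2,3\}\subsetneq[3]$ with $\ur=(1,2)$, your word $NNE$ yields the single basis $\{1,2\}$. Yet $\Pol(\uX,\ur)$ is the segment between $\onebb_{\{1,2\}}$ and $\onebb_{\{1,3\}}$.

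The remedy is the opposite of your concluding remark: write each block as $E^{e_j}N^{\rho_j}$, so the north steps occupy the \emph{largest} positions of $D_j$. Reading $D_j$ from the end, a terminal segment $X_{j-1}\cup S$ then has two kinds of bound:
\begin{itemize}
\item while $|S|\le\rho_j$, the bound is $r_{j-1}+|S|$, which follows from the $X_{j-1}$-constraint and $\vx_i\le1$;
\item once $|S|>\rho_j$, the bound is $r_j$, which follows from the $X_j$-constraint and $\vx_i\ge0$, since $X_{j-1}\cup S\subseteq X_j$.
\end{itemize}
This corrected word is precisely the path of \cref{constr:path}, which runs east-then-north between consecutive points $g_i$.

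With that change your proof is valid and differs from the paper's only in its tools. Total unimodularity of the interval (consecutive-ones) constraint matrix replaces the paper's perturbation along $\be_a-\be_b$, which shows that vertices are $0$-$1$. The terminal-segment form of the Gale order replaces \cref{lm:paths_and_intervals}.
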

We will directly show that these matroid base polytopes come from matroids isomorphic to Schubert matroids.
\begin{thm}\label{thm:doublechains_are_nestedmatroids}
For every nested matroid $M$, there is a unique matroidal double chain $(\uX, \ur)$ such that $M = M(\uX, \ur)$.
Furthermore, every matroidal double chain defines a unique nested matroid.
\end{thm}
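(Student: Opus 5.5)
The plan is to pass through cyclic flats. The double chain of a nested matroid should be its chain of nonempty cyclic flats, with $E$ appended if $E$ is not cyclic, together with their ranks. The second sentence of the statement is the easy half, so I treat it first. Given a matroidal double chain $(\uX,\ur)$, \cref{thm:doublechains_are_matroids} provides a matroid $\matroid(\uX,\ur)$ with $\Pol_{\matroid(\uX,\ur)}=\Pol(\uX,\ur)$ that is isomorphic to a Schubert matroid. By \cite[Theorem 3.14]{bonin2003lattice} together with \cite[Lemma 2]{oxley_matroids_1982}, matroids isomorphic to Schubert matroids are nested. Uniqueness holds because a matroid is determined by its base polytope, namely as the set of vertices.

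For existence in the first sentence, let $\matroid$ be nested with chain of cyclic flats $Z_0\subsetneq Z_1\subsetneq\cdots\subsetneq Z_m$. Here $Z_0=\cl(\emptyset)$ is the set of loops and $Z_m=\cl(\cdot)$ of the whole cyclic part. Define $\uX$ to be the nonempty $Z_i$, with $E$ appended if $Z_m\neq E$, and set $r_j=\rk(X_j)$. I then verify the axioms one at a time.
\begin{itemize}
\item Axiom \ref{it:rankinequalities} holds because distinct flats in a chain have strictly increasing rank.
\item Axiom \ref{it:DcRanktogether} is the nullity condition. A nonempty cyclic flat has positive nullity, which gives $|X_1|-r_1>0$. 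If $Z\subsetneq Z'$ are cyclic flats, then $\matroid|_{Z'}/Z$ has no coloops, because every element of $Z'\setminus Z$ lies in a circuit of $Z'$. Hence it has positive nullity, and nullity is additive along restriction and contraction. Only the step to an appended non-cyclic $E$ can have equal nullity, since then $E\setminus Z_m$ consists of coloops. This explains why the final inequality is non-strict.
\end{itemize}
To show $\Pol_\matroid=\Pol(\uX,\ur)$, I invoke the Bonin--de Mier description: a set $I$ is independent if and only if $|I\cap Z|\le\rk Z$ for every cyclic flat $Z$. Consequently the $0/1$ points of $\Pol(\uX,\ur)$ are exactly the bases of $\matroid$. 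Both polytopes are matroid base polytopes by \cref{thm:doublechains_are_matroids}, so they are the convex hulls of their $0/1$ points and therefore coincide.

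For uniqueness, I show that $(\uX,\ur)$ can be read off from $\matroid(\uX,\ur)$. The first step is $\rk(X_j)=r_j$. By \cref{obs:rkpol} this is the maximum of $\sum_{i\in X_j}\vx_i$ over $\Pol(\uX,\ur)$. I will exhibit a vertex attaining $r_j$, built greedily along the chain, which works because $r_j-r_{j-1}\le |X_j\setminus X_{j-1}|$ by the nullity inequalities. The second step, which I expect to be the main obstacle, is that the $X_j$, apart from an appended $E$ with equal nullity, are exactly the nonempty cyclic flats. I would establish this through the explicit Schubert model of \cref{constr:path}. There each $X_j$ corresponds to an initial segment of the path ending at an east-then-north corner, and the cyclic flats of a lattice path matroid with top path $N^rE^{d-r}$ are precisely such initial segments. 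This identification transfers along the isomorphism and yields the chain $\uX$. Since the ranks were recovered in the first step, the double chain is determined by $\matroid$.
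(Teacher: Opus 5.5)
Your arguments for the second sentence and for existence are sound, and they take a different route from the paper. The paper proves that $\uX$ is the chain of nonempty cyclic flats of $\matroid(\uX,\ur)$ with $\rk X_j=r_j$ (\cref{lm:cyclicflatsofdoublechains}, \cref{cor:rankofXj}), and then invokes the Bonin--de~Mier cyclic-flat cryptomorphism for both directions at once. You instead use the Catalan/PI-extension equivalences for nestedness, and the criterion $|I\cap Z|\le\rk Z$ to match $0/1$ points. This is more explicit than the paper's one-line appeal. Your separate treatment of an appended non-cyclic $E$ is also more careful than the statement of \cref{lm:cyclicflatsofdoublechains}, which does not hold as stated when the last nullity inequality is an equality.

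The gap is in uniqueness. Knowing $\rk X_j=r_j$ does not pin down $\uX$. Everything rests on showing that the $X_j$ (apart from an appended $E$) are exactly the nonempty cyclic flats of $\matroid(\uX,\ur)$. You only assert this, through a description of the cyclic flats of Schubert matroids that is neither proved nor cited. As stated, that description is also misoriented relative to \cref{constr:path}. There $X_j=\{\alpha_j,\dots,d\}$ is a \emph{final} segment, traced by the part of $\Path(\uX,\ur)$ after $g_j$. The path enters $g_j$ by a north step and leaves by an east step. A proper initial segment $\{1,\dots,i\}$ is never a nonempty cyclic flat of a Schubert matroid, since every nonempty cyclic flat contains $d$. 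Both inclusions need an argument, and this is exactly where the strict nullity inequalities are used.

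Here is a short way to close the gap within your framework. The constraints show that $I$ is independent if and only if $|I\cap X_j|\le r_j$ for all $j$; the ``if'' direction is the extension argument of \cref{lm:indepsetsSM}. A greedy count along the chain then gives
\[
\rk S=\min_{0\le j\le k}\bigl(r_j+|S\setminus X_j|\bigr),\qquad X_0=\emptyset,\ r_0=0 .
\]

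First, every cyclic flat is some $X_j$. Let $Z$ be a cyclic flat and let $j$ attain this minimum for $S=Z$. Then each $e\in Z\setminus X_j$ has $\rk(Z\setminus e)<\rk Z$, i.e.\ $e$ is a coloop of $\matroid|_Z$, which is impossible. Hence $Z\subseteq X_j$ and $\rk Z=r_j=\rk X_j$, so flatness forces $Z=X_j$.

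Conversely, each $X_j$ with a strict nullity jump is a cyclic flat. For $e\notin X_j$ the formula gives $\rk(X_j\cup e)=r_j+1$, using $r_i>r_j$ for $i>j$ and $|X_j|-|X_i|\ge r_j-r_i$ for $i<j$; so $X_j$ is a flat. For $e\in X_j$ it gives $\rk(X_j\setminus e)=r_j$ precisely when $|X_j|-r_j>|X_i|-r_i$ for all $0\le i<j$; then no element is a coloop of $\matroid|_{X_j}$, so $X_j$ is cyclic. The axioms guarantee this for $j<k$, and for $j=k$ exactly when the last inequality is strict.

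Thus $\uX$ is the chain of nonempty cyclic flats, with $E$ appended when $E$ is not cyclic. Together with $r_j=\rk X_j$, the double chain is determined by the matroid.
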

%
We further define an auxiliary notion of matroidal double chains to build an association to Schubert matroids:
an \Def{ordered matroidal double chain}\footnote{We use a different convention than \cite[Section 6]{derksen2010valuative}}
is a matroidal double chain $(\uX, \ur)$ such that each $X_i$ is an interval containing $d$, i.e., $X_i=\{\alpha_i, \alpha_i+1,\dots,d\}$ for some $\alpha_1>\alpha_2>\dots>\alpha_k=1$%
.

Before giving the construction of a lattice path for ordered matroidal double chains, we need some further auxiliary definitions:
A $NE$-path from $(0, 0)$ to $(d-r, r)$ partitions the points on the $(d-r)\times r$ grid into three disjoint sets $U \uplus \Path \uplus L$, where $\Path$ contains the grid points on the path, $U$ contains the grid points strictly to the north/west of the path $\Path$ and $L$ contains the grid points strictly to the south/east of the path $\Path$.
Note that  $L$ and $U$ may be empty.
If $U$ is non-empty, it contains the point $(0, r)$, similarly, if $L$ is non-empty it contains $(d-r, 0)$.
We say that $\Path$ \Def{passes to the north/west relative to a grid point $g$} in the grid if $g \in L \uplus \Path$.
See \cref{fig:UpL} for an example.
\begin{figure}
 \begin{tikzpicture}[scale=0.6]
  \draw[step=1cm, thick, gray] (0,0) grid (6,6);
  \draw[orange, ultra thick, opacity=0.8] (0,0) -- (2,0) -- (2,1) -- (3,1) -- (3,3) -- (5,3) -- (5,5) -- (6,5) -- (6,6);
  \filldraw[orange, opacity=0.8] (0,0) circle (5pt);
   \filldraw[orange, opacity=0.8] (1,0) circle (5pt);
   \filldraw[orange, opacity=0.8] (2,0) circle (5pt);
 \filldraw[orange, opacity=0.8] (2,1) circle (5pt);
 \filldraw[orange, opacity=0.8] (3,1) circle (5pt);
 \filldraw[orange, opacity=0.8] (3,2) circle (5pt);
 \filldraw[orange, opacity=0.8] (3,3) circle (5pt);
 \filldraw[orange, opacity=0.8] (4,3) circle (5pt);
 \filldraw[orange, opacity=0.8] (5,3) circle (5pt);
 \filldraw[orange, opacity=0.8] (5,4) circle (5pt);
 \filldraw[orange, opacity=0.8] (5,5) circle (5pt);
 \filldraw[orange, opacity=0.8] (6,5) circle (5pt);
 \filldraw[orange, opacity=0.8] (6,6) circle (5pt);
   \filldraw[turkisD, opacity=0.8] (3,0) circle (5pt);
   \filldraw[turkisD, opacity=0.8] (4,0) circle (5pt);
   \filldraw[turkisD, opacity=0.8] (5,0) circle (5pt);
   \filldraw[turkisD, opacity=0.8] (6,0) circle (5pt);
   \filldraw[turkisD, opacity=0.8] (4,1) circle (5pt);
   \filldraw[turkisD, opacity=0.8] (5,1) circle (5pt);
   \filldraw[turkisD, opacity=0.8] (6,1) circle (5pt);
   \filldraw[turkisD, opacity=0.8] (4,2) circle (5pt);
   \filldraw[turkisD, opacity=0.8] (5,2) circle (5pt);
   \filldraw[turkisD, opacity=0.8] (6,2) circle (5pt);
   \filldraw[turkisD, opacity=0.8] (6,3) circle (5pt);
   \filldraw[turkisD, opacity=0.8] (6,4) circle (5pt);
   \filldraw[grunH, opacity=0.8] (0,6) circle (5pt);
   \filldraw[grunH, opacity=0.8] (1,6) circle (5pt);
   \filldraw[grunH, opacity=0.8] (2,6) circle (5pt);
   \filldraw[grunH, opacity=0.8] (3,6) circle (5pt);
   \filldraw[grunH, opacity=0.8] (4,6) circle (5pt);
   \filldraw[grunH, opacity=0.8] (5,6) circle (5pt);
   \filldraw[grunH, opacity=0.8] (0,5) circle (5pt);
   \filldraw[grunH, opacity=0.8] (1,5) circle (5pt);
   \filldraw[grunH, opacity=0.8] (2,5) circle (5pt);
   \filldraw[grunH, opacity=0.8] (3,5) circle (5pt);
   \filldraw[grunH, opacity=0.8] (4,5) circle (5pt);
   \filldraw[grunH, opacity=0.8] (0,4) circle (5pt);
   \filldraw[grunH, opacity=0.8] (1,4) circle (5pt);
   \filldraw[grunH, opacity=0.8] (2,4) circle (5pt);
   \filldraw[grunH, opacity=0.8] (3,4) circle (5pt);
   \filldraw[grunH, opacity=0.8] (4,4) circle (5pt);
   \filldraw[grunH, opacity=0.8] (0,3) circle (5pt);
   \filldraw[grunH, opacity=0.8] (1,3) circle (5pt);
   \filldraw[grunH, opacity=0.8] (2,3) circle (5pt);
   \filldraw[grunH, opacity=0.8] (0,2) circle (5pt);
   \filldraw[grunH, opacity=0.8] (1,2) circle (5pt);
   \filldraw[grunH, opacity=0.8] (2,2) circle (5pt);
   \filldraw[grunH, opacity=0.8] (0,1) circle (5pt);
   \filldraw[grunH, opacity=0.8] (1,1) circle (5pt);
 \end{tikzpicture}
\caption{Disjoint union $U\uplus \Path\uplus L$ of grid points, grid points in $U$ in green, grid points in $\Path$ in orange, and grid points in $L$ in petrol.
}\label{fig:UpL}
\end{figure}

\begin{constr}[Schubert path of  an ordered matroidal double chain]\label{constr:path}
Take $(\uX, \ur)$ an ordered matroidal double chain, and write $X_i = \{\alpha_i, \alpha_i + 1, \ldots, d\}$.
On an $(d-r) \times r$ grid, where the bottom left point is identified with the coordinate point $(0, 0)$, and the top right point is identified with the coordinate point $(d-r, r)$, 
consider the set of grid points
\begin{equation*}
 G(\uX, \ur) \coloneqq \{ g_i \coloneqq (\alpha_i+r_i - r - 1, r - r_i) \text{ for }i = 1, \ldots, k - 1\}\,.
\end{equation*}
It is easy to see that all these points are in the interior of the grid.
See \cref{fig:construction_shubert} for an example.

These trace out a \Def{strictly decreasing} collection of points $ G(\uX, \ur)$.
That is, every two distinct points $g_i=(x_i, y_i), g_j=(x_j, y_j)$ for $1\leq i<j\leq k-1$  satisfy $x_i > x_j $ and $y_i > y_j$.

Indeed, from the matroidal double chain conditions we get
\begin{align}\label{eq:xcoordinate_old}
0 \leq \alpha_{k-1}-r+r_{k-1}-1<\dots<\alpha_1 -r+r_1-1<d-r\\
\quad \Leftrightarrow \quad 
 0 < |X_1| - r_1 < \cdots <|X_{k-1}| - r_{k-1} \leq d - r
\end{align}
and
\begin{align}\label{eq:ycoordinate_old}
 0<r-r_{k-1}<\dots<r-r_2<r-r_1\leq r
 \quad \Leftrightarrow \quad 
 0 \leq r_1 < \cdots < r_{k-1} < r \,.
\end{align}

Note that for $k = 1$ we consider an empty collection of points.
There is a southernmost NE-path $\Path = \Path(\uX, \ur)$ that passes to the north/west relative to $G(\uX, \ur)$.
In fact, since the points in $G(\uX, \ur)$ are strictly decreasing, all the points in $ G(\uX, \ur)$ are in $\Path(\uX, \ur)$. This observation will be useful later.
See the example below.
This defines a Schubert matroid $\SM(\Path(\uX,\ur))$.
We also directly write $\SM(\uX, \ur)$ for the corresponding Schubert matroid.
\end{constr}

\begin{smpl}\label{ex:doublechain}
Consider the  ordered double chain $(\uX, \ur)$ for $d = 21$, $k = 6$ and $r = 12$ given by $\alpha_1 = 21, \alpha_2 = 19, \alpha_3 = 12, \alpha_4 = 9, \alpha_5 = 5, \alpha_6 = 1$, and $r = (0, 1, 5, 7, 9, 12)$.

The collection of points $G$ is $(1, 3)$,  $(3, 5)$,  $(4, 7)$,  $(7, 11)$, and  $(8, 12)$.
For instance, for $i = 2$ we have $g_2 = (\alpha_2+r_2 - r - 1, r - r_2) = (19+1-12-1, 12-1) = (7, 11)$.
The corresponding path is displayed in \cref{fig:construction_shubert}.

\begin{figure}
\begin{tikzpicture}[scale=0.6]
 \draw[step=1cm, thick, gray] (0,0) grid (9,12);
 \draw[orange, ultra thick, opacity=0.8] (0,0) -- (1,0) -- (1,3) -- (3,3) -- (3,5) -- (4,5) -- (4,7) -- (7,7) -- (7,11) -- (8,11) -- (8,12) -- (9,12);
 \filldraw[orange, opacity=0.8] (1,3) circle (5pt) node[anchor=south east, black]{$g_5$};
 \filldraw[orange, opacity=0.8] (3,5) circle (5pt)node[anchor=south east, black]{$g_4$};
 \filldraw[orange, opacity=0.8] (4,7) circle (5pt)node[anchor=south east, black]{$g_3$};
 \filldraw[orange, opacity=0.8] (7,11) circle (5pt)node[anchor=south east, black]{$g_2$};
 \filldraw[orange, opacity=0.8] (8,12) circle (5pt)node[anchor=south east, black]{$g_1$};
\end{tikzpicture}

\caption{The path corresponding to the given ordered matroidal double chain in \Cref{ex:doublechain}.}
 \label{fig:construction_shubert}
\end{figure}

\end{smpl}

\begin{lm}\label{lm:paths_and_intervals}
On a $(d-r) \times r$ grid, let $\Path$ be a $NE$-path from $(0, 0)$ to $(d-r, r)$, and let $B_{\Path}$ be the corresponding $r$-subset in $[d]$.
Fix a point $g = (x, y)$ in the grid.
Let $\alpha = x + y$ and $I = \{ \alpha + 1,  \alpha + 2, \ldots, d\}$.
Then $\Path$ passes to the north/west relative to $g$ if and only if $|B_{\Path}\cap I | \leq r - y$.
\end{lm}

\begin{proof}
    Consider the points $\ldots, g_{-2}, g_{-1}, g_0  = g, g_1, g_2, \ldots$ that are diagonally to the north/east and south/west from $g$, including $g$.
    That is, $g_i = (x - i, y + i)$.
    Note that every $NE$-path contains exactly one of these points, say $g_i$ for some $i$.
    
    The section of the path $\Path$ contained in the bottom left $(g_i)_x\times (g_i)_y=(x-i) \times (y+i)$ grid, corresponds to a subset $(B_\Path)^{(i)} $ of $\{1,2,\dots,x + y\}$ of size $(g_i)_y = y + i$.
    Therefore, 
    \begin{equation*}|B_\Path \cap I| = |B_\Path| - |B_\Path \cap ([d] \setminus I)| = r - |(B_\Path)^{(i)}| = r - y - i\, . \end{equation*}

    The path $\Path$ passes to the north/west relative to $g$ if and only if $i \geq 0$.
    Therefore, a path passes to the north/west relative to $g$ if and only if $|B_\Path \cap I| \leq r - y$, as desired.
\end{proof}

\begin{prop}\label{prop:balancedorderedSM}
If $(\uX, \ur)$ is  an ordered matroidal  double chain, then $\Pol (\uX, \ur)$ from Equation~\eqref{eq:defpolofdoublechainappendix} is the matroid base polytope of the Schubert matroid $\SM(\uX, \ur)$ from \cref{constr:path}.
\end{prop}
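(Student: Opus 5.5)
We must prove that for an ordered matroidal double chain $(\uX,\ur)$, $\Pol(\uX,\ur)=\Pol_{\SM(\uX,\ur)}$. We compare both polytopes through an inequality description.

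\emph{Vertices of the Schubert polytope satisfy the inequalities.} A basis $B_\Qath$ of $\SM(\Path)$, with $\Path=\Path(\uX,\ur)$, is exactly a path $\Qath$ with $\Path\leqP\Qath$, i.e.\ $\Qath$ lies weakly north/west of $\Path$. By \cref{constr:path} every grid point $g_i$ lies on $\Path$, so such a $\Qath$ passes to the north/west relative to each $g_i$.

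Apply \cref{lm:paths_and_intervals} with $g_i=(\alpha_i+r_i-r-1,\,r-r_i)$. Then $x+y=\alpha_i-1$, so $I=\{\alpha_i,\dots,d\}=X_i$, and the lemma gives $|B_\Qath\cap X_i|\le r-(r-r_i)=r_i$. For $i=k$ the inequality $\sum_{X_k}\vx_j\le r$ is automatic. Hence every $\onebb_{B_\Qath}$ lies in $\Pol(\uX,\ur)$, and $\Pol_{\SM(\uX,\ur)}\subseteq\Pol(\uX,\ur)$ by convexity.

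\emph{Integral points of $\Pol(\uX,\ur)$ are bases.} Conversely, any $0/1$ point of $\Pol(\uX,\ur)$ is $\onebb_{B_\Qath}$ for an $r$-subset $B_\Qath$. By the reverse direction of \cref{lm:paths_and_intervals}, $\Qath$ passes north/west of every $g_i$.

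Since $\Path$ is the southernmost path doing so, $\Path\leqP\Qath$. This needs the lattice property that the pointwise southernmost of two paths both passing north/west of a point set is again such a path. It would be stated as part of \cref{constr:path}: "southernmost" is meant in the $\leqP$ sense. Hence $B_\Qath$ is a basis.

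\emph{Integrality of $\Pol(\uX,\ur)$ (main obstacle).} It remains to show that $\Pol(\uX,\ur)$ is the convex hull of its $0/1$ points. The constraint matrix has rows $\onebb_{X_1},\dots,\onebb_{X_k}$ of a chain of intervals, together with the unit bounds and the equality row $\onebb_{[d]}$. Since the $X_i$ are nested intervals all ending at $d$, the matrix has the consecutive-ones property in columns after reversing the order. It is therefore an interval matrix and totally unimodular, and appending the identity rows preserves total unimodularity. With integral right-hand sides $r_i$ and $r$, every vertex is integral.

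I expect this to be the step needing the most care: checking total unimodularity, or alternatively giving a direct rounding argument that pushes a fractional coordinate along an edge direction $\ve_a-\ve_b$ with $a<b$ in the same difference $X_{i}\setminus X_{i-1}$. The first approach is cleaner and is the one I would try.

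Combining the three steps, both polytopes have the same vertex set, so $\Pol(\uX,\ur)=\Pol_{\SM(\uX,\ur)}$.
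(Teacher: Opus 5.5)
Your proof is correct. Your first two steps are the paper's second step, split into the two inclusions. The paper likewise takes $I=X_j$ in \cref{lm:paths_and_intervals}, and it silently uses that ``southernmost'' in \cref{constr:path} means the $\leqP$-minimum. Your concern there is easily settled: on each anti-diagonal, the lower envelope of two paths crosses at the more south-east of the two crossing points. Hence the criterion $i\geq 0$ from the proof of \cref{lm:paths_and_intervals} is preserved.

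The real difference is the integrality step. The paper does not use total unimodularity. For a point $\vv\in\Pol(\uX,\ur)$ with fractional coordinates, it takes $a<b$ to be the \emph{two largest} fractional indices and moves along $\be_a-\be_b$. Since each $X_j$ is an upper interval, the case $a\in X_j\not\ni b$ cannot occur. If $b\in X_j\not\ni a$, then all other indices of $X_j$ exceed $a$ and carry integral values. So $\sum_{i\in X_j}\vv_i\notin\Z$, and that constraint has slack.

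Your interval-matrix and Hoffman--Kruskal argument is valid. The rows $\onebb_{X_j}$ have consecutive ones, and adjoining $\pm$ unit rows and $-\onebb_{[d]}$ preserves total unimodularity. It is also more general: it works for any chain of subsets, without the ordered normalization. The paper's argument is instead elementary and self-contained, but it uses the upper-interval structure essentially.

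Your suggested fallback, moving along $\ve_a-\ve_b$ with $a,b$ in the same block $X_i\setminus X_{i-1}$, would not suffice on its own. Take $X_1=\{3,4\}$, $r_1=1$, $X_2=[4]$, $r=2$. The point $(\tfrac12,1,\tfrac12,0)$ has its fractional coordinates in different blocks, and this is exactly the situation the paper's slack argument handles.
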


\begin{proof}
For simplicity, we write $\Pol_{\square} \coloneqq \Pol_{\SM(\Path(\uX, \ur))}$.
We start by constructing the path $\Path$ explicitly using \Cref{constr:path}, and we show that the polytopes $\Pol(\uX, \ur)$, constructed in \cref{eq:defpolofdoublechain}, and $\Pol_{\square}$ are the same.

As the \textbf{first} step we will prove that
 if $\vv $ is a vertex of $\Pol(\uX, \ur)$, then it is a $0$-$1$ vector.

We do so by contraposition: assume that $\vv $ is not a $0$-$1$ vector in $\Pol(\uX, \ur)$.
Because $\vv_i \in [0, 1]$ for all $i$, and $\sum_{i=1}^d \vv_i = r$ is an integer, there are at least two indices $i$ such that $\vv_i \in (0, 1)$.
Let $a < b$ be the second largest and largest such indices, respectively.

We define the map $\varphi : \R \to \R^d $ that sends $x \mapsto \vv + x(\be_a - \be_b)$.
We will show that there is some $\varepsilon > 0 $ such that $\varphi(-\varepsilon, \varepsilon) \subseteq \Pol(\uX, \ur)$.
This concludes this part of the proof, as no vertex is contained in an open line contained in the polytope $\Pol(\uX, \ur)$.

Let $\delta_0 = \min\{|\vv_a|, |1 - \vv_a|, |\vv_b|, |1-\vv_b|\}$.
Note that $\varphi(-\delta_0, \delta_0) \subseteq [0, 1]^d$.
Furthermore, $\sum_i \varphi(x)_i = \left(\sum_i \vv_i \right) + x - x = r$.
Let $j = 1, \ldots, k$. There are four cases to consider:

\begin{enumerate}[a)]

\item if $a, b \in X_j$, then $\sum_{i \in X_j} \varphi(x)_i = \left(\sum_{i \in X_j} \vv_i \right) + x - x = \sum_{i \in X_j} \vv_i \leq r_j$.
In this case let $\delta_j = \infty $.

\item if $a, b \not\in X_j$, then $\sum_{i \in X_j} \varphi(x)_i = \sum_{i \in X_j} \vv_i \leq r_j$.
In this case let $\delta_j = \infty $.

\item it is impossible to have $a \in X_j$, but $b\not\in X_j$, as $a < b$ and $X_j$ is an upper interval.

\item if $a \not\in X_j$, but $b \in X_j$,
we argue that $ \sum_{i \in X_j} \vv_i < r_j $.
Indeed, by construction $\vv_i $ is an integer for every $i > a$ except for $i = b$.
Thus $\sum_{i \in X_j} \vv_i = \vv_b + \sum_{\substack{i \in X_j \\ i \neq b}} \vv_i \not\in \Z$.

Therefore, for $x \in (-\delta_j, \delta_j)$ we have that $\sum_{i \in X_j} \varphi(x)_i = - x + \sum_{i \in X_j} \vv_i \leq r_j$.
\end{enumerate}
Let $\varepsilon = \min (\delta_j)_{j=0, 1, \ldots, k }$.
We conclude that for every $x \in (-\varepsilon, \varepsilon)$ we have that $\varphi(x) \in \Pol(\uX, \ur)$.
This concludes that $\varphi(0) = \vv $ is not a vertex of $\Pol(\uX, \ur)$.

Note that this also implies that every $0$-$1$ vector in $\Pol(\uX,\ur)$ is a vertex of $\Pol(\uX,\ur)$.

\textbf{Secondly,} we show that
the vertex sets of $\Pol_{\square}$ and $\Pol(\uX, \ur)$ are the same.

We have seen that vertices of either polytope are $0$-$1$ vectors.
Furthermore, they satisfy $\sum_i \vv_i = r$, so these correspond to 
$B\subset [d]$ of size $r$.

We show that $\onebb_B$ is a vertex of $\Pol_{\square}$ if and only if it is a vertex of $\Pol(\uX, \ur)$.
Indeed, $\onebb_B$ is a vertex of $\Pol_{\square}$ if and only if there is a path $\Qath \succeq \Path(\uX, \ur)$ such that $B = B_{\Qath}$.
The definition of $\Path(\uX, \ur)$ tells us that $\Qath \succeq \Path(\uX, \ur)$ if and only if it is to the north/west of each point $g_j(\uX, \ur) = (\alpha_j + r_j - r - 1, r - r_j)$ for $j = 1, \ldots, k$.

Furthermore, \cref{lm:paths_and_intervals} gives us that this is equivalent to $|B_{\Qath}\cap X_j | \leq r_j$ for $j = 1, \ldots, k$, which can be simplified to $\sum_{i\in X_j} (\onebb_B)_i \leq r_j$.
Because every vertex is a $0$-$1$ vector, this concludes the proof.
\end{proof}
Via a simple relabeling, the following claim can be established:
\begin{lm}\label{lm:isotoSM}
If $(\uY, \ur)$ is a matroidal double chain, there exists an ordered matroidal double chain $(\uX, \ur)$ such that $\matroid(\uX, \ur) \cong \matroid(\uY, \ur)$.
\end{lm}
\begin{proof}
Let $d = |\groundset|$ and set $Y_0 = \emptyset$.
Define a bijection $\linorder : \groundset \to [d]$ by assigning, for each $j = 1, \ldots, k$, the elements of $Y_j \setminus Y_{j-1}$ to the positions $\{d - |Y_j| + 1, \ldots, d - |Y_{j-1}|\}$ in any order.
By construction,
\begin{equation}\label{eq:linorder}
  \linorder(Y_j) = \{d - |Y_j| + 1, \ldots, d\} \quad \text{for each } j = 1, \ldots, k.
\end{equation}
Set $X_j = \linorder(Y_j)$ for each $j$.
Each subset $X_j$ is an interval of $[d]$ containing $d$, and $X_1 \subsetneq \cdots \subsetneq X_k = [d]$ since $|Y_1| < \cdots < |Y_k| = d$.
Since $|X_j| = |Y_j|$ for all $j$, the rank conditions \cref{it:rankinequalitiesappendix,it:DcRanktogetherappendix} are preserved, so $(\uX, \ur)$ is an ordered matroidal double chain.
Moreover, $\linorder$ maps $\Pol(\uY, \ur)$ bijectively to $\Pol(\uX, \ur)$, since $\sum_{e \in Y_j} y_e \leq r_j$ if and only if $\sum_{i \in X_j} y_{\linorder^{-1}(i)} \leq r_j$.
Hence $\matroid(\uX, \ur) \cong \matroid(\uY, \ur)$ via $\linorder$.
\end{proof}
%
\begin{proof}[Proof of \cref{thm:doublechains_are_matroids}]
This follows from relabeling the ground set to obtain an ordered matroidal double chain, then applying \cref{prop:balancedorderedSM}, and finally applying the inverse relabeling.
\end{proof}
We have shown that every matroidal double chain defines a matroid $\matroid(\uX,\ur)$ isomorphic to a Schubert matroid, hence a generalized Catalan matroid (following the nomenclature in \cref{ssec:nestedmatroids}).

However, we have not yet shown that different matroidal double chains define different matroids.
We will show that  $\uX$ is the collection of all non-empty cyclic flats of the matroid $\matroid(\uX,\ur)$ (\cref{lm:cyclicflatsofdoublechains}).
Hence $\matroid(\uX,\ur)$ is a nested matroid.
Together with the information that $\rk(X_i)=r_i$, we can then argue that every nested matroid has a unique representation as matroidal double chain.

For that we will first prove some facts about independent sets and circuits in matroidal double chains.

\begin{lm}\label{lm:indepsetsSM}
Let $(\uX, \ur)$ be  a matroidal double chain with ground set $[d]$.
Let $I\subseteq [d]$.
If $|I \cap X_i| \leq r_i $ for each $i$, then $I$ is an independent set.
\end{lm}

\begin{proof}

Assume that $(\uX, \ur)$ is an ordered matroidal double chain.
From \cref{lm:isotoSM}, we do not lose any generality with this.
We show this by adding the smallest $r - |I|$ terms from  $I^C=[\nd]\setminus I$ to $I$. This gives us a basis $B$ and we want to show that it satisfies $\onebb_B \in \Pol(\uX, \ur)$.
According to \cref{prop:balancedorderedSM}, this is enough to show that $I$ is independent.

It is immediate that $\onebb_B \in[0,1]^d$ and $ \sum_{j=1}^d (\onebb_B)_j =r$.
We need to show that for every $i=1,\dots,k$ we have
\begin{equation*}
    \sum_{j\in X_i}(\onebb_B )_j \leq r_i\,.
\end{equation*}
(Recall Equation~\eqref{eq:defpolofdoublechainappendix}.)
This is equivalent to  $|B\cap X_i| \leq r_i$ for every $i=1,\dots,k$.

There are two cases. 
First assume $B\cap X_i = I\cap X_i$, then $|B\cap X_i| = |I\cap X_i| \leq r_i$ by assumption.

Otherwise $B\cap I^C \cap X_i \neq \emptyset$.
See \cref{fig:expleq1} for an example.
Since $X_i=\{\alpha_i, \ldots, \nd\}$ and we added the smallest elements in $I^C$ to build the basis $B$, it follows that $X_i^C \subseteq B$, therefore
\begin{equation}\label{eq:proofindep1}
    |B \cap X_i| \underbrace{=}_{\substack{X_i^C \subseteq B \\ |B|=r }} r - \underbrace{|B \setminus X_i|}_{\substack{=|B\cap X_i^C|\\=|X_i^C|}}
    \underbrace{=}_{|X_i^C|=\alpha_i-1} r - \alpha_i + 1 = r - (d - |X_i|) = |X_i| - (d - r)\,.
\end{equation}
\begin{figure}
 \begin{tikzpicture}
  \draw (0,0) node[grunH, circle, fill, scale =.5] {};
  \draw (1,0) node[grunH, circle, fill, scale =.5] {};
  \draw (2,0) node[grunH, circle, fill, scale =.5] {};
  \draw (3,0) node[black, circle, fill, scale =.5] {};
  \draw (4,0) node[black, circle, fill, scale =.5] {};
  \draw (5,0) node[turkisD, circle, fill, scale =.5] {};
  \draw (6,0) node[turkisD, circle, fill, scale =.5] {};
  \draw (7,0) node[turkisD, circle, fill, scale =.5] {};
  \draw (8,0) node[black, circle, fill, scale =.5] {};

  \node at (0,-0.5) {$1$};
  \node at (1,-0.5) {$2$};
  \node at (4,-0.5) {$\dots$};
  \node at (7,-0.5) {$d-1$};
  \node at (8,-0.5) {$d$};
  \node at (0, 0.5) {\textcolor{grunH}{$B\setminus I$}};
  \node at (5, 0.5) {\textcolor{turkisD}{$I$}};
  \node at (1.5,0.5) {$X_i$};

  \draw[turkisD] (6,0) ellipse (1.4cm and 0.3cm);
  \draw[grunH] (1,0) ellipse (1.4cm and 0.3cm);
  \draw[thick] (8.2,1) arc(90:270:6.7cm and 1cm);

 \end{tikzpicture}
 \caption{Schematic explanation for Equation~\eqref{eq:proofindep1}}
 \label{fig:expleq1}
\end{figure}

However, the matroidal double chain axiom \cref{it:DcRanktogetherappendix} states that $|X_i| - r_i \leq d - r$.
After substituting and rearranging we get $|B \cap X_i| \leq r_i$ and
 conclude that $B$ is a basis, as desired.
\end{proof}

\begin{cor}\label{cor:indepandcyclesinSM}
Let $(\uX, \ur)$ be a matroidal double chain with ground set $[d]$.
\begin{enumerate}[label=(I\arabic*)]
    \item\label{claimI1}  Fix an index $j\in\{1,\dots, k\}$ and let $I\subseteq X_j $ be such that
    \begin{equation*}
       |I \cap (X_i\setminus X_{i-1}) | \leq r_i - r_{i-1} \text{ for all }1\leq i \leq j \,,
    \end{equation*}
    where we set $r_0=0$.
    Then the set $I$ is independent.
    \item\label{claimI2} Fix indices $j, l\in\{1,\dots,k-1\}$ with $j > l$ and let $I\subseteq X_j$ be such that
    \begin{equation*}
    \begin{split}
        |I \cap (X_i\setminus X_{i-1}) | &\leq r_i - r_{i-1} \text{ for } i < j \text{ and }i\neq l \,,\\
        |I \cap (X_j\setminus X_{j-1}) | &\leq r_j - r_{j-1}+1\,,\\
       \text{  and } \quad  |I \cap (X_l\setminus X_{l-1}) | &\leq r_l - r_{l-1} - 1\,.
    \end{split}
    \end{equation*}
    Then the set $I$ is independent.
    \end{enumerate}
    \begin{enumerate}[label=(C)]
    \item\label{claimC}  Fix an index $j\in\{1,\dots, k\}$ and let $C\subseteq X_j$ be such that
    \begin{equation*}
    \begin{split}
     |C \cap (X_i\setminus X_{i-1}) | &= r_i - r_{i-1} \quad \text{ for all } i<j\,, \\
     \text{ and }\quad   |C \cap (X_j\setminus X_{j-1}) | &= r_j - r_{j-1} + 1\,,
    \end{split}
    \end{equation*}
    where we again set $r_0=0$.
    Then $C$ is a circuit.
\end{enumerate}
\end{cor}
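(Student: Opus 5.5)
Throughout, I work with a matroidal double chain $(\uX,\ur)$ and, as in the proof of \cref{lm:indepsetsSM}, assume via \cref{lm:isotoSM} that it is ordered. This is harmless because independence and circuits are preserved under the relabeling $\linorder$. The main tool is \cref{lm:indepsetsSM}: a set $I$ is independent as soon as $|I\cap X_i|\le r_i$ for every $i$. Telescoping sums over the layers $X_i\setminus X_{i-1}$ convert the layerwise hypotheses into these cumulative bounds.

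For \ref{claimI1}, take $i\le j$ and sum the layer bounds to get $|I\cap X_i|=\sum_{m\le i}|I\cap(X_m\setminus X_{m-1})|\le r_i$. For $i>j$ we have $I\subseteq X_j\subseteq X_i$, so $|I\cap X_i|=|I|\le r_j<r_i$. Now apply \cref{lm:indepsetsSM}.

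For \ref{claimI2}, the same telescoping applies. For $i<l$ the cumulative sum is $\le r_i$. For $l\le i<j$ the deficit of one in layer $l$ gives $\le r_i-1$. For $i=j$, the $-1$ from layer $l$ and the $+1$ from layer $j$ cancel, giving $\le r_j$. For $i>j$ the bound is $\le r_j<r_i$. Hence \cref{lm:indepsetsSM} applies again.

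For \ref{claimC}, I first show that $C$ is dependent. We have $|C|=|C\cap X_j|=r_j+1$. Since $X_j$ is itself one of the constraining sets, every basis $B$ satisfies $|B\cap X_j|\le r_j$, because $\onebb_B\in\Pol(\uX,\ur)$ by \cref{prop:balancedorderedSM} and the ordering. So $\rk(X_j)\le r_j$, and $C\subseteq X_j$ cannot be independent.

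For minimality, remove an element $c$ from $C$, say $c\in X_l\setminus X_{l-1}$ with $l\le j$. If $l=j$, the set $C\setminus\{c\}$ satisfies the layer bounds of \ref{claimI1} with equality, so it is independent. If $l<j$, layer $l$ now has $r_l-r_{l-1}-1$ elements and layer $j$ has $r_j-r_{j-1}+1$. This is exactly the situation of \ref{claimI2}, provided $j\le k-1$.

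The main obstacle is the boundary case $j=k$ in \ref{claimC} combined with $l<j$, since \ref{claimI2} was stated only for $j\le k-1$. I expect the same telescoping to handle it: for $i=k$ the cumulative count is $r_k=r$, which is still allowed. So I would simply check the cumulative inequalities directly there rather than cite \ref{claimI2}. In the same spirit, I would verify that the hypothesis of \cref{lm:indepsetsSM} requires only the constraints $|I\cap X_i|\le r_i$, which also covers $X_k=[d]$.
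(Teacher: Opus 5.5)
Your proposal is correct and follows essentially the same route as the paper. Both reduce to the ordered case and telescope the layer bounds into the cumulative bounds $|I\cap X_i|\le r_i$ of \cref{lm:indepsetsSM} to get \ref{claimI1} and \ref{claimI2}; for \ref{claimC} both use $\rk(X_j)\le r_j$ from the polytope description to show dependence, then remove one element to land in \ref{claimI1} or \ref{claimI2}. Your direct cumulative check for $j=k$ in \ref{claimC}, where \ref{claimI2} as stated does not literally apply, correctly fills a small gap that the paper's write-up glosses over.
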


\begin{proof}
Assume that $(\uX, \ur)$ is an ordered matroidal double chain.
From \cref{lm:isotoSM}, we do not lose any generality with this.
Note that
\begin{equation*}
 X_j=\biguplus_{i=1}^j X_i\setminus X_{i-1}\,,
\end{equation*}
where $X_0=\emptyset$.
The decomposition into a disjoint union carries over to
\begin{equation*}
 I\cap X_j=\biguplus_{i=1}^j I\cap( X_i\setminus X_{i-1})\,.
\end{equation*}
Then \ref{claimI1} and \ref{claimI2} are applications of \cref{lm:indepsetsSM}.

To establish \ref{claimC}, recall that the definition of the matroid $\SM(\uX,\ur)$ (\Cref{prop:balancedorderedSM}) via the matroid base polytope in Equation~\eqref{eq:defpolofdoublechainappendix} implies $\rk(X_j)\leq r_j$ (\cref{obs:rkpol}).
From the assumptions we have  that $C\subseteq X_j$ and since the rank function is monotone we can compute
\begin{equation*}
 \rk(C) \leq \rk(X_j) \leq r_j = \sum_{i=1}^j r_i - r_{i-1}  = - 1 + \sum_{i=1}^j |C \cap (X_i \setminus X_{i-1})|= |C| - 1\,,
\end{equation*}
where we use the convention that $r_0 = 0 $.
So $C$ is dependent.
Removing any element from $C$ will give us a set of the type presented in either \ref{claimI1} or \ref{claimI2}, concluding the proof.
\end{proof}
\begin{cor}\label{cor:rankofXj}
Let $(\uX, \ur)$ be a matroidal double chain with ground set $[d]$.
Then the rank of $X_j$ equals $r_j$ for each $j = 1, \ldots, k$.
In particular, there is an independent set $I$ contained in $X_j$ such that for all $i \leq j$ we have that $|I\cap (X_i\setminus X_{i-1})| = r_i - r_{i-1}$.
\end{cor}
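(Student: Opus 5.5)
We need to prove that $\rk(X_j)=r_j$, and to exhibit an independent $I\subseteq X_j$ with $|I\cap(X_i\setminus X_{i-1})|=r_i-r_{i-1}$ for all $i\le j$. The upper bound is already available: by \cref{obs:rkpol} and the defining inequality $\sum_{i\in X_j}\vx_i\le r_j$ of $\Pol(\uX,\ur)$ in \eqref{eq:defpolofdoublechainappendix}, we have $\rk(X_j)\le r_j$, as in the proof of \ref{claimC}. So the whole task is the lower bound. For that we produce an independent set $I\subseteq X_j$ with $|I|=r_j$ of the stated shape. Once $I$ exists, $\rk(X_j)\ge |I|=\sum_{i\le j}(r_i-r_{i-1})=r_j$ by telescoping with $r_0=0$.

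For the construction, we choose, for each $i\le j$, a subset $I_i\subseteq X_i\setminus X_{i-1}$ of size exactly $r_i-r_{i-1}$, and set $I=\biguplus_{i\le j}I_i$. Independence of $I$ is then immediate from \ref{claimI1} of \cref{cor:indepandcyclesinSM}. Alternatively, \cref{lm:indepsetsSM} applies directly, since $|I\cap X_l|=r_{\min(l,j)}\le r_l$ for every $l$.

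The only thing to check, and the main obstacle, is that these choices are possible, i.e.\ that $|X_i\setminus X_{i-1}|\ge r_i-r_{i-1}$ for each $i$. This follows from axiom \ref{it:DcRanktogetherappendix}: the sequence $|X_i|-r_i$ is nondecreasing (strictly increasing until the last step), and $|X_0|-r_0=0<|X_1|-r_1$. Hence
\begin{equation*}
|X_i|-|X_{i-1}|-(r_i-r_{i-1})=(|X_i|-r_i)-(|X_{i-1}|-r_{i-1})\ge 0 .
\end{equation*}
For $i=1$ this reads $|X_1|\ge r_1$, which holds even when $r_1=0$. Since $r_i-r_{i-1}\ge 0$ by \ref{it:rankinequalitiesappendix}, the required subsets $I_i$ exist. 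This completes the argument; no reduction to ordered double chains is needed beyond what the cited lemmas already absorb.
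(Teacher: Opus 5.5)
Your proof is correct and follows essentially the paper's route: bound $\rk(X_j)\le r_j$ via the defining inequality of $\Pol(\uX,\ur)$, then exhibit an independent set of size $r_j$ by taking $r_i-r_{i-1}$ elements from each $X_i\setminus X_{i-1}$ and invoking \ref{claimI1}. The only difference is that the paper first reduces to an ordered chain and takes the smallest elements $\min_{r_i-r_{i-1}}X_i$, whereas you choose arbitrary subsets and check $|X_i\setminus X_{i-1}|\ge r_i-r_{i-1}$ directly from axiom \ref{it:DcRanktogetherappendix}. Your version of that check has the inequality in the correct direction; the paper's write-up states it backwards and says $X_{i+1}$ where $X_{i-1}$ is meant.
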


\begin{proof}
Assume that $(\uX, \ur)$ is an ordered matroidal double chain.
From \cref{lm:isotoSM}, we do not lose any generality with this.

    We construct an independent set $I \subseteq X_j$ of size $|I| = r_j$.
    Indeed, let
    \begin{equation*}
       I \coloneqq \bigcup_{i=1}^j \min_{r_i - r_{i-1}} X_i \,,
    \end{equation*}
    where $\min_s A $ denotes  the set of the $s$ smallest elements in $A$.
    Because ($\uX, \ur)$ is matroidal, we have $|X_i| - r_i > |X_{i+1}| - r_{i+1}$, this ensures that the $r_i - r_{i-1}$ smallest elements of $X_i$ are not in $X_{i+1}$.
    From \ref{claimI1}, $I$ is independent.
    To conclude the proof note that
    \begin{equation*}
     r_j=\lvert I\rvert=\rk(I)\leq \rk(X_j)\leq r_j.
    \end{equation*}
\end{proof}

\begin{prop}\label{lm:cyclicflatsofdoublechains}
If $(\uX, \ur)$ is a  matroidal double chain, then $\uX$ is the set of all non-empty cyclic flats of $M(\uX, \ur)$.
\end{prop}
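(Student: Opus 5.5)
We need to show two things: each $X_j$ is a non-empty cyclic flat of $\matroid(\uX,\ur)$, and every non-empty cyclic flat $F$ equals some $X_j$. By \cref{lm:isotoSM} we may assume throughout that $(\uX,\ur)$ is ordered. The tools are \cref{cor:indepandcyclesinSM}, \cref{cor:rankofXj} and \cref{lm:indepsetsSM}, together with the polytope description $\rk(A)=\max_{\vx\in\Pol(\uX,\ur)}\sum_{i\in A}\vx_i$ from \cref{obs:rkpol}.

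\textbf{Step 1: $X_j$ is cyclic.} Take $e\in X_j$; we want a circuit $C$ with $e\in C\subseteq X_j$. Let $i\le j$ be the index with $e\in X_i\setminus X_{i-1}$. Since $|X_i|-r_i>|X_{i-1}|-r_{i-1}$ (with $X_0=\emptyset$, $r_0=0$), we have $|X_i\setminus X_{i-1}|\ge r_i-r_{i-1}+1$. So we can choose $C\subseteq X_i$ with $e\in C$ and:
\begin{itemize}
\item $r_l-r_{l-1}$ elements in each block $X_l\setminus X_{l-1}$ for $l<i$;
\item $r_i-r_{i-1}+1$ elements in the block $X_i\setminus X_{i-1}$.
\end{itemize}
Such a $C$ is a circuit by \ref{claimC}, so $X_j$ is a union of circuits. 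For $j=k$ this works provided $|X_k|-r_k>|X_{k-1}|-r_{k-1}$. In the degenerate equality case, the block $X_k\setminus X_{k-1}$ consists of coloops, and $X_k=\groundset$ is then simply the closure of the cyclic set $X_{k-1}$. I would treat this case carefully; I expect it is handled by noting that $\groundset$ is a flat and its cyclic part is $X_{k-1}$, so the statement should be read with the chain convention adjusted. I will check this edge case against the definition.

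\textbf{Step 2: $X_j$ is a flat.} Take $e\notin X_j$ and let $I\subseteq X_j$ be the independent set of size $r_j$ from \cref{cor:rankofXj}. We show $I\cup\{e\}$ is independent, which gives $\rk(X_j\cup e)>r_j$. Check the hypothesis of \cref{lm:indepsetsSM}:
\begin{itemize}
\item for $l\le j$, $|(I\cup e)\cap X_l|=r_l$ since $e\notin X_l$;
\item for $l>j$, $|(I\cup e)\cap X_l|\le r_j+1\le r_l$ by strict increase of the $r$'s.
\end{itemize}

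\textbf{Step 3: every non-empty cyclic flat is some $X_j$.} This is the main obstacle. Let $F$ be a non-empty cyclic flat and let $j$ be minimal with $F\subseteq X_j$. The goal is to show $F=X_j$. Suppose instead $F\subsetneq X_j$. Since $X_{j-1}\cup F$ is strictly larger than $X_{j-1}$, flatness considerations should force $F\supseteq X_{j-1}$ via a rank count. The plan is to compute $\rk(F)$ from the polytope, as the maximum of $\sum_{i\in F}\vx_i$. For nested structure I expect
\[
\rk(F)=\min_{l}\bigl(r_l+|F\setminus X_l|\bigr),
\]
obtained by a greedy/flow argument on the polytope using \cref{lm:indepsetsSM}: take smallest elements block by block.

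Let $l$ be the minimizer in this formula. If $F\setminus X_l\neq\emptyset$, each element of $F\setminus X_l$ is a coloop of $\matroid|_F$, which contradicts $F$ being cyclic. So $F\subseteq X_l$ with $\rk(F)=r_l$, and flatness then gives $F=\cl(F)\supseteq X_l$, since adding elements of $X_l$ does not increase the rank beyond $r_l$. Hence $F=X_l$.

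The delicate part is proving the rank formula itself. I would attempt it by exhibiting an independent set attaining the minimum, via \cref{lm:indepsetsSM}, and matching it with the upper bound coming from the inequalities defining $\Pol(\uX,\ur)$.
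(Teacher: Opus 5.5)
Your proposal is correct for every case in which the statement is actually true. Steps 1 and 2 follow the paper; Step 3 takes a genuinely different route. The edge case you flagged is a real defect of the statement, not a convention issue.

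\textbf{Steps 1 and 2.} These are the paper's argument: circuits are assembled block by block via \ref{claimC}, and flatness comes from independence of $I\cup\{e\}$ for $e\notin X_j$.

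\textbf{Step 3, compared with the paper.} The paper passes to the Schubert model. It shows via the Gale order that non-empty connected cyclic flats contain $d$, and splits a cyclic flat into connected components. It then imports Bonin--de Mier's theorem that connected flats of lattice path matroids are intervals, giving $F=\{\alpha,\dots,d\}$, and finishes with a two-case analysis on $|F\setminus X_{j-1}|$. You instead use the formula $\rk(A)=\min_{0\le l\le k}\bigl(r_l+|A\setminus X_l|\bigr)$ together with a minimiser argument.

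\textbf{Filling in the rank formula.} You need $X_0=\emptyset$, $r_0=0$, i.e.\ the $l=0$ term, or the formula fails for independent $A$. The proof is routine along the lines you indicate.
\begin{itemize}
\item The upper bound is immediate from $0\le\vx_i\le1$ and $\sum_{i\in X_l}\vx_i\le r_l$.
\item For the lower bound, set $a_l=|A\cap(X_l\setminus X_{l-1})|$, $m_0=0$ and $m_l=\min(r_l,\,m_{l-1}+a_l)$. Take $m_l-m_{l-1}$ elements from each block. This is not simply $r_l-r_{l-1}$: a shortfall in earlier blocks must be carried forward.
\item These counts lie in $[0,a_l]$ because $m_{l-1}\le r_{l-1}<r_l$. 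The resulting set meets each $X_l$ in $m_l\le r_l$ elements, so it is independent by \cref{lm:indepsetsSM}. Finally, $m_k$ unrolls to the minimum.
\end{itemize}

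\textbf{The coloop claim.} This is just the upper bound applied to $F\setminus e$: $\rk(F\setminus e)\le r_l+|F\setminus X_l|-1=\rk(F)-1$. The opening sentences of your Step 3 (choosing $j$ minimal, the vague rank count) are unnecessary; the minimiser does all the work.

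\textbf{What each route buys.} Yours is self-contained and purely polyhedral. It needs no relabelling to a Schubert matroid, no external citation, and no decomposition into connected components. That decomposition step needs care in the paper: its assertion that each component of $\matroid|_F$ is a flat fails once $F$ contains a loop and has more than one component, since every other component then has the loop in its closure. The paper's route, in exchange, yields the extra fact that cyclic flats are final intervals in the ordered model.

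\textbf{The edge case.} Suppose $|X_{k-1}|-r_{k-1}=|X_k|-r_k$. Then $\sum_{i\notin X_{k-1}}\vx_i\ge r-r_{k-1}=|\groundset\setminus X_{k-1}|$, which forces every element of $\groundset\setminus X_{k-1}$ to be a coloop. So $X_k=\groundset$ is not cyclic, and the statement is false as written. The paper's own example $((\{4\},\{2,3,4\},[4]),(0,1,2))$ has coloop $1$, and its only non-empty cyclic flats are $\{4\}$ and $\{2,3,4\}$.

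The paper's proof has the same hole: it picks $S_l\subseteq X_l\setminus X_{l-1}$ of size $r_l-r_{l-1}+1$, which is impossible for $l=k$ here. Your Step 3 is unaffected. The correct conclusion is therefore that the non-empty cyclic flats are $X_1,\dots,X_{k-1}$ in the equality case and $X_1,\dots,X_k$ otherwise. With $X_0=\emptyset$, $r_0=0$, this also covers $k=1$, $r=d$, if that case is admitted.

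One slip in your remark: $\groundset$ is not the closure of $X_{k-1}$. By your own Step 2, $X_{k-1}$ is already a flat. What is true is that $X_{k-1}$ is the union of all circuits of $\matroid$.
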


\begin{proof}
Assume that $(\uX, \ur)$ is an ordered matroidal double chain.
From \cref{lm:isotoSM}, we do not lose any generality with this.
In this way, from \cref{lm:paths_and_intervals} we have that  $\matroid(\uX, \ur)$ is a Schubert matroid.

\noindent\textbf{Any non-empty connected cyclic flat contains $\nd$.}
Recall that a flat $F$ is connected if the restriction $\matroid|_F$ is connected.
Indeed, assume for sake of contradiction that $d \not\in F$.
Then
\begin{equation}
    \rk(F)< \rk(F\cup \{d\}) \,. \label{eq:flat}
\end{equation}
Let $F = \{f_1 < \cdots < f_{|F|} \}$, let $F_j = \{f_1, \ldots, f_j\}$ and let $j\in\{1,\dots,|F|\}$ be the smallest index such that $\rk(F_j) < j$.
It follows that $\rk(F_j) = j-1 = \rk(F_{j-1})$.
From the submodularity of the rank function we get that
\begin{align}
   \rk( F \cup \{ d \} ) + \rk(F_j) &\leq \rk(F_j\cup \{d\}) + \rk(F)\, \text{ and } \label{eq:submod1} \\
   \rk( F_j \cup \{ d \} ) + \rk(F_{j-1}) &\leq \rk(F_j) + \rk(F_{j-1} \cup \{d\} )\, .  \label{eq:submod2}
\end{align}
Equations~\eqref{eq:submod1} and \eqref{eq:flat} imply that $\rk(F_j) < \rk(F_j\cup \{d\})$.
Equation~\eqref{eq:submod2} together with the monotonicity of the rank function implies  that $\rk( F_j \cup \{ d \})  \leq \rk(F_{j-1} \cup \{d\} )$.
Putting it all together, we have
\begin{equation*}
    j-1 < \rk(F_{j-1} \cup \{d\} ) \leq j\,,
\end{equation*}
 and therefore $F_{j-1} \cup \{d\}$ is an independent set, whereas $F_j$ is not.
This is impossible because $f_j < d$ so $F_j \leqG F_{j-1} \cup \{d\}$.

\noindent\textbf{Any non-connected cyclic flat is a  union of connected cyclic flats.}
Let $F$ be a cyclic flat and take the decomposition $\matroid|_F = \oplus_i \matroid_i $.
Let $F_i \coloneqq \groundset (\matroid_i)$ denote the ground set of $\matroid_i$, we will show that these are cyclic flats.
We denote by $\cl_\matroid(S)$ the smallest flat in $\matroid$ containing $S$.
It is immediate that $F_i$ is a flat, because $\cl_\matroid(F_i) = \cl_F(F_i) = F_i$.

Assume that $F_i$ is not a cyclic flat.
Therefore there is some $f \in F_i$ that is not contained in any circuit in $F_i$.
Now, recall that the circuits of the direct sum of matroids is the union of its circuits (see \cite[Theorem 4.2]{oxley-briefly} for a sketch of the direct sum construction and its effects on circuits).
Thus $f $ is not contained in any circuit in $F$, a contradiction.

\noindent\textbf{Any non-empty cyclic flat $F$ is of the form $X_j$ for some $j$.}
From the above, every cyclic flat $F$ is the union of cyclic flats.
Furthermore, it is established in \cite[Theorem 3.11]{bonin2006lattice} that every non-trivial connected flat of a lattice path matroid is an interval.
Above we saw that these connected flats contain $d$, therefore $F$ is the union of intervals that contain $d$, so it is itself an interval that contains $d$.
Call $\alpha \coloneqq \min F $, i.e., $F$ is of the form $F=\{\alpha, \alpha+1,\dots, d\}$.

Then there exists a $j\in\{1,\dots,k\}$  such that $F\subseteq X_j$ and $F\not\subseteq X_{j-1}$, that is, $\alpha_{j-1}> \alpha \geq \alpha_j$.
Now assume for sake of contradiction that $\alpha > \alpha_j$.
We consider two cases:

First, assume $|F \setminus X_{j-1} | \geq r_j - r_{j-1}$.
Let $I$ be some independent set of rank $r_{j-1}$ contained in $X_{j-1}\subset F$
(which exists by \cref{cor:rankofXj}), and let
\begin{equation*}
 J \coloneqq \{\alpha, \alpha + 1 , \ldots, \alpha + r_j - r_{j-1} - 1\} \uplus I\,.
\end{equation*}
Then we use \ref{claimI1} to show that $J$ is an independent set.
Indeed,
\begin{equation*}
 |J\cap (X_j\setminus X_{j-1})| = | \{\alpha, \alpha + 1 , \ldots, \alpha + r_j - r_{j-1} - 1\} | = r_j - r_{j-1}
\end{equation*}
and for every $i < j$ we have
\begin{equation*}
 |J\cap (X_i\setminus X_{i-1})| = |I\cap (X_i\setminus X_{i-1})| = r_i - r_{i-1}
\end{equation*}
by \cref{cor:rankofXj}.
It follows that
\begin{equation*}
     r_j\leq \rk ( F ) \leq \rk(X_j)=r_j\,,
\end{equation*}
 so $F$ can only be a flat if $F = X_j$.

Second, if  $|F \setminus X_{j-1} |  < r_j - r_{j-1}$,  then we claim,  there is no circuit $C$ such that $\alpha \in C \subseteq F$.
Indeed, assume for sake of contradiction that such $C$ exists, so from \cref{lm:indepsetsSM} there is some $l\in \{1, \ldots , k\}$ such that $|C\cap X_l| > r_l $.
By hypothesis, $C\setminus \{\alpha\}$ is independent, so $|(C \setminus \{\alpha\})\cap X_{i}| \leq r_i $ for all $i$ by \cref{cor:rankofXj}.

For $i < j$ we have $|C \cap X_{i}| = |(C \setminus \{\alpha\})\cap X_{i}|$ since $\alpha < \alpha_{j-1}$.
So $|C \cap X_{i}| \leq r_i$.

For $i > j$ we have
\begin{equation*}
    |C \cap X_{i}| \underbrace{=}_{C \subseteq F \subseteq X_j \subseteq X_i} |C \cap X_j| = |(C \setminus \{\alpha\})\cap X_{j}| + 1
    \underbrace{\leq}_{\substack{C \setminus \{\alpha\} \text{independent}\\\rk(X_j)\leq r_j}}
    r_j + 1 \underbrace{\leq}_{\text{matroidal double chain}} r_i\, .
\end{equation*}
Therefore the index $l$ with $|C \cap X_{l}|>r_l $ must be $l=j$.
Then we have $|C\cap X_j| \geq r_j + 1$.
On the other hand we have
\begin{equation*}
   |C\cap X_j| = |C\cap (X_j \setminus X_{j-1})| + \underbrace{|C \cap X_{j-1}| }_{\leq r_{j-1}}
   \underbrace{\leq}_{C\subset F \subset X_j}
   \underbrace{|F \setminus X_{j-1} |}_{< r_j-r_{j-1}} + r_{j-1} < r_j \,,
\end{equation*}
 a contradiction.

\noindent\textbf{The set $X_j$ is a cyclic flat for $i=1,\dots,k$.}
First, $\rk(X_j) = r_j$, as it contains the independent set
\begin{equation*}
    I_j \coloneqq \biguplus_{i = 1}^j \min_{r_i - r_{i-1}} X_i \, ,
\end{equation*}
(see \cref{cor:rankofXj}).
Furthermore $\rk(X_j \cup \{e\} ) = r_j + 1$ if $e\not\in X_j$, as $I_j \cup \{e\}$ is an independent set from \ref{claimI1}.
Therefore $X_j $ is a flat.

We show now that $X_j$ is a cyclic flat.
Take $e \in X_j$, let $l$ be minimal such that $e \in X_l$ (note that $e \in X_l\setminus X_{l-1}$) and let  $S_l \subseteq X_l\setminus X_{l-1}$ be some set of size $r_l - r_{l-1} + 1$ containing $e$.
Define
\begin{equation*}
    C \coloneqq S_l\ \uplus \left(\biguplus_{i = 1}^{l-1}  \min_{r_i-r_{i-1}} X_i\right)\,,
\end{equation*}
Note that $C\subseteq X_l$.
It follows from \Cref{cor:indepandcyclesinSM}~\ref{claimC} that $C$ is a circuit.
This concludes the proof that $X_j$ is a cyclic flat.

\end{proof}

\begin{proof}[Proof of \cref{thm:doublechains_are_nestedmatroids}]
  Combining \cref{lm:cyclicflatsofdoublechains} and \cref{cor:rankofXj} we have shown that a matroidal double chain $(\uX,\ur)$ is the chain of cyclic flats of a (nested) matroid together with the rank function restricted the cyclic flats.
 It follows from \cite[Theorem 3.2.]{bonin2008lattice}, that this, together with the conditions of the matroidal double chains, gives a cryptomorphic characterization of  (nested) matroids.
\end{proof}

We close this section with counting the number of loopless nested matroids on $[\nd]$.
\begin{prop}\label{prop:hampe_dfact}
 There are $d!$ many loopless nested matroids on the ground set $[d]$.
\end{prop}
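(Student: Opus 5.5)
Using \cref{thm:doublechains_are_nestedmatroids}, I will count loopless matroidal double chains on $[\nd]$ instead of nested matroids; the two are in bijection. By \cref{prop:looplessnestedmatroid}(v), looplessness is exactly the condition $r_1>0$. So I will count pairs $(\uX,\ur)$ with $\emptyset\subsetneq X_1\subsetneq\cdots\subsetneq X_k=[\nd]$ and $0<r_1<\cdots<r_k=r$, subject to $0<|X_1|-r_1<\cdots<|X_{k-1}|-r_{k-1}\le \nd-r$.

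The first step separates the set data from the numerical data. A chain $\uX$ is determined by its sizes $n_j=|X_j|$ together with an ordered set partition of $[\nd]$ into the blocks $X_1,X_2\setminus X_1,\dots,X_k\setminus X_{k-1}$. For a fixed size vector $n_1<\dots<n_k=\nd$, the number of such chains is the multinomial coefficient $\binom{\nd}{n_1,\,n_2-n_1,\dots}$. The numerical part is the set of admissible sequences $(n_j,r_j)$. Writing $s_j=n_j-r_j$, these satisfy $0<r_1<\dots<r_k$ and $0<s_1<\dots<s_{k-1}\le s_k$. Since $r$ and $s$ both strictly increase, the increments $\Delta n_j=\Delta r_j+\Delta s_j$ are at least $2$ for $j<k$, with $\Delta r_k\ge1$ and $\Delta s_k\ge0$.

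The second step is a bijection with permutations. I will send a permutation $\sigma\in\Sym_\nd$ to a double chain by splitting its one-line word at suitable positions into blocks $X_j\setminus X_{j-1}$, reading blocks from the end of the word. The blocks would be runs in which a prescribed number $\Delta r_j$ of entries play the role of ``north'' steps and $\Delta s_j$ play the role of ``east'' steps. I would try to make this precise via descents, mimicking the max-min set composition bijection: each block is an increasing run, its first $\Delta r_j$ elements count toward rank, and the boundary conditions force descents at the block junctions. If a direct bijection proves awkward, I will fall back on an exponential generating function. I would compute $\sum_{\nd}\#\{\text{loopless}\}\,x^\nd/\nd!$ by summing over $k$ a product of factors, one per block. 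Each interior block contributes $\sum_{a\ge1,b\ge1}x^{a+b}/(a+b)!$ weighted by the number of splittings, and the last block contributes a similar sum with $b\ge0$. I would then simplify this to $1/(1-x)$.

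The main obstacle is making the weighting correct. For a given block size $m$, the number of admissible splittings $(\Delta r,\Delta s)$ is $m-1$ for interior blocks and $m$ for the last block. The generating-function identity then reads
\begin{equation*}
\sum_{k\ge1}\Bigl(\sum_{m\ge2}\tfrac{(m-1)x^m}{m!}\Bigr)^{k-1}\sum_{m\ge1}\tfrac{m\,x^m}{m!}
=\frac{xe^x}{1-\bigl((x-1)e^x+1\bigr)},
\end{equation*}
using $\sum_{m\ge2}(m-1)x^m/m!=(x-1)e^x+1$ and $\sum_{m\ge1} m\,x^m/m!=xe^x$. The denominator is $1-(x-1)e^x-1=(1-x)e^x$, so the whole expression simplifies to $x/(1-x)$, whose coefficients give $\nd!$ for $\nd\ge1$. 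The remaining work is checking the boundary cases: that $r_1>0$ together with $s_1>0$ gives first-block size at least $2$, with the case $k=1$ handled by the last-block factor alone, and that this matches the constraints exactly.
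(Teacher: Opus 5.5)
Your proposal is correct and is essentially the paper's proof. Both pass to matroidal double chains, encode a chain by the ordered blocks $X_i\setminus X_{i-1}$ with their rank increments ($m-1$ admissible splittings for all but one block, $m$ for the special one), and evaluate the resulting exponential generating function, which collapses to $x/(1-x)$ (the paper adds the constant term for $d=0$ and gets $1/(1-x)$); the speculative descent bijection is therefore not needed.

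One small point in your favour: giving the last block $m$ choices and every other block, including the first (because $r_1>0$), only $m-1$ is the faithful translation of looplessness. The paper's displayed constraints instead attach the $m$-choice block to $Y_1$ and actually describe coloop-free chains, which happen to be equinumerous.
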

This result recovers a coarser version of \cite[Theorem 4.5]{hampe_intersection_2017}, where the author shows that the number of loopless nested matroids on $[\nd]$ for a fixed rank $r$ is the Eulerian number $\Eulerian_{r-1,\nd}$.
\begin{proof}[Proof of \cref{prop:hampe_dfact}]
Recall the set of matroidal double chains for fixed $\nd, k, r$:
\begin{equation*}
\begin{BNiceArray}{c|c}
 & \emptyset\subsetneq X_1 \subsetneq\dots\subsetneq X_k=[\nd]\\
(\uX, \ur) &  0 \leq r_1 < \cdots < r_k = r\\
 &  0 < |X_1| - r_1 < \cdots <|X_{k-1}| - r_{k-1} \leq |X_k| - r_k = d - r
\end{BNiceArray}\, .
\end{equation*}
We want to give an equivalent description in terms of differences. Define
\begin{equation*}
 Y_i\coloneqq X_i\setminus X_{i-1}
 \quad\text{and}\quad
 h_i\coloneqq r_i-r_{i-1} \quad\text{ for }i=1,\dots,k\,,
\end{equation*}
where we set $ X_0\coloneqq\emptyset$ and $r_0=0$. Note that $\lvert Y_i\rvert=\lvert X_i\rvert-\lvert X_{i-1}\rvert$,
\begin{equation*}
 X_i=\bigcup_{j=1}^i Y_j,
  \quad\text{and}\quad
 r_i = \sum_{j=1}^i h_j\,.
\end{equation*}
With that we can describe the set of matroidal double chains for fixed $d, k, r$ equivalently as
\begin{equation*}
\begin{BNiceArray}{c|cc}
(Y_1,\dots,Y_k)\models [d] & 0\leq h_1<\lvert Y_1\rvert,&0 < h_k\leq\lvert Y_k\rvert\\
  (h_1,\dots,h_k)\models r & 0 < h_i<\lvert Y_i\rvert\,, &\text{for }i=2,\dots,k-1
\end{BNiceArray}
\end{equation*}
If we want to restrict to loopless nested matroids, recall from \cref{prop:looplessnestedmatroid} that $r_1>0$.
Then the set of matroidal double chains for fixed $\nd, k, r$ can be described as:
\begin{equation*}
\begin{BNiceArray}{c|cc}
(Y_1,\dots,Y_k)\models [d] & 0\leq h_1<\lvert Y_1\rvert,& \\
  (h_1,\dots,h_k)\models r & 0 < h_i<\lvert Y_i\rvert\,, &\text{for }i=2,\dots,k
\end{BNiceArray}
\end{equation*}
For a fixed $d$ we want to count the cardinality of 
\begin{equation*}
 \bigcup_{k,r} {\small\begin{BNiceArray}{c|cc}
(Y_1,\dots,Y_k)\models [d] & 0\leq h_1<\lvert Y_1\rvert,& \\
  (h_1,\dots,h_k)\models r & 0 < h_i<\lvert Y_i\rvert\,, &\text{for }i=2,\dots,k
\end{BNiceArray} }\, .
\end{equation*}
We do so by  considering exponential generating functions.
Using the methods of exponential power series presented in \cite[Sections II.1 and II.2]{flajolet2009analytic}, we investigate
\begin{equation*}
 1+\sum_{d\geq1} \#\left(\bigcup_{k,r} {\small\begin{BNiceArray}{c|cc}
(Y_1,\dots,Y_k)\models [d] & 0\leq h_1<\lvert Y_1\rvert,& \\
  (h_1,\dots,h_k)\models r & 0 < h_i<\lvert Y_i\rvert\,, &\text{for }i=2,\dots,k
\end{BNiceArray} }\right) \frac{x^d}{d!}\,.
\end{equation*}
Observe that a loopless nested matroid for a fixed $k$ can be described as a tuple
\begin{equation*}
 ((Y_1,h_1),\dots,(Y_k,h_k))\quad\text{with}\begin{cases}
                                             0\leq h_1<\lvert Y_1\rvert\,, \\
                                             1\leq h_i<\lvert Y_i\rvert\,,\text{ for }i=2,\dots,k
                                            \end{cases}
\end{equation*}
It then follows that $d=\lvert Y_1\rvert + \dots +\lvert Y_k\rvert $ and $r=h_1+\dots+h_k$.
So, we can now compute
\begin{align*}
 &1+\sum_{d\geq1} \#\left(\bigcup_{k,r} {\small\begin{BNiceArray}{c|cc}
(Y_1,\dots,Y_k)\models [d] & 0\leq h_1<\lvert Y_1\rvert,& \\
  (h_1,\dots,h_k)\models r & 0 < h_i<\lvert Y_i\rvert\,, &\text{for }i=2,\dots,k
\end{BNiceArray} }\right) \frac{x^d}{d!}\\
=\ &1+\sum_{k\geq1}
    \left(\sum_{j\geq 1} \frac{j}{j!} x^j\right)
    \left(\sum_{j\geq 2} \frac{j-1}{j!} x^j\right)^{k-1}\\
 =\ & 1+\left(\sum_{j\geq 0} \frac{1}{j!} x^{j+1}\right) \sum_{k\geq0}
    \left(\sum_{j\geq 2} \frac{j}{j!} x^j - \sum_{j\geq 2} \frac{1}{j!} x^j\right)^{k}\\
 =\ &  1+(x e^x)\ \sum_{k\geq0}
    \left(\sum_{j\geq 1} \frac{1}{j!} x^{j+1} 
        - \sum_{j\geq 2} \frac{1}{j!} x^j\right)^{k} \\
=\ &  1+(x e^x)\ \sum_{k\geq0}
    \left((x (e^x-1))-(e^x-x-1) \right)^{k} \\
  =\ & 1+(x e^x)\ \sum_{k\geq0}
    \left( e^x(x-1) +1 \right)^{k}\\
 =\ &  1+ \frac{x e^x}{ 1-  (e^x(x-1) +1)}
\  =\  \frac{  -e^x(x-1) +x e^x}{ -e^x(x-1) } \\
  =\ &  \frac{  e^x}{ -e^x(x-1) }
 \ =\ \frac{1}{ 1-x }
\end{align*}
Hence, the number of loopless nested matroids on $[d]$ is $d!$.
\end{proof}

\newpage

\section{Data}\label{sec:data}
\begin{table}[h]
\small{
\begin{tabular}{l c}
Set composition & SM(24)\\
$\mathbf{1234}$,  $123\lvert 4$,  $\mathbf{124\lvert 3}$,  $\mathbf{134\lvert 2}$, $ \mathbf{234 \lvert  1} $,  &  \redcross \\
$ 12 \lvert  34   $ & \redcross\\
$\mathbf{13 \lvert  24 }  $ & $ 24                       $ \\
$\mathbf{14 \lvert  23}   $ & $ 23                       $ \\
$\mathbf{23 \lvert  14}   $ & $ 14                       $ \\
$\mathbf{24 \lvert  13}   $ & $ 13                       $ \\
$\mathbf{34 \lvert  12 }  $ & $ 12                       $ \\
$1 \lvert  234 $, $ \mathbf{2 \lvert  134} $, $ \mathbf{3 \lvert  124 }$, $ \mathbf{4 \lvert  123} $  & \redcross \\
$ 12 \lvert  3 \lvert  4 $, $ 12 \lvert  4 \lvert  3 $ & \redcross\\
$13 \lvert  2 \lvert  4 $, $ 13 \lvert  4 \lvert  2  $ & $ 24                       $ \\
$14 \lvert  2 \lvert  3 $, $ \mathbf{14 \lvert  3 \lvert  2}  $ & $ 23                       $ \\
$23 \lvert  1 \lvert  4 $, $ 23 \lvert  4 \lvert  1  $ & $ 14                       $ \\
$24 \lvert  1 \lvert  3 $, $ \mathbf{24 \lvert  3 \lvert  1}  $ & $ 13                       $ \\
$34 \lvert  1 \lvert  2 $, $ \mathbf{34 \lvert  2 \lvert  1}  $ & $ 12                       $ \\
$ 3 \lvert  12 \lvert  4 $, $ 4 \lvert  12 \lvert  3  $& \redcross \\
$2 \lvert  13 \lvert  4  $ & $ 14                       $ \\
$\mathbf{4 \lvert  13 \lvert  2 } $ &  \redcross \\
$\mathbf{2 \lvert  14 \lvert  3}  $ & $ 13                       $ \\
$\mathbf{3 \lvert  14 \lvert  2 }$& \redcross \\
$ \mathbf{4 \lvert  23 \lvert  1 } $& \redcross \\
$1 \lvert  23 \lvert  4 $ & $24$ \\ 
$1 \lvert  24 \lvert  3  $ & $ 23                       $ \\
$\mathbf{3 \lvert  24 \lvert  1} $ & \redcross \\
$ 1 \lvert  34 \lvert  2 $, $ 2 \lvert  34 \lvert  1 $ & \redcross \\
$3 \lvert  4 \lvert  12 $, $ \mathbf{4 \lvert  3 \lvert  12}  $ & $ 12                       $ \\
$2 \lvert  4 \lvert  13 $, $ \mathbf{4 \lvert  2 \lvert  13}  $ & $ 13                       $ \\
$2 \lvert  3 \lvert  14 $, $ \mathbf{3 \lvert  2 \lvert  14}  $ & $ 14                       $ \\
$1 \lvert  4 \lvert  23 $, $ 4 \lvert  1 \lvert  23  $ & $ 23                       $ \\
$1 \lvert  3 \lvert  24 $, $ 3 \lvert  1 \lvert  24  $ & $ 24                       $ \\
$1 \lvert  2 \lvert  34 $, $ 2 \lvert  1 \lvert  34  $ & \redcross \\
$1 \lvert  2 \lvert  3 \lvert  4 $ & $ 24$ \\
$\mathbf{4 \lvert 3 \lvert 2 \lvert 1} $ & 12\\
...&
\end{tabular}}
\caption{(Almost) All set compositions of $[4]$, grouped by the bases of $\SM(\{2,4\})$ that has maximal $\opi$-score.
A set composition is marked \redcross\ if it is not $\SM(\{2,4\})$-generic;
otherwise, the displayed pair indicates the selected basis of $\SM(\{2,4\})$.
All max-min set compositions are highlighted in bold.
This corresponds to a single row of the matrix $\wqscmatrixconj$ from \cref{smpl:d4nested}.
}
\label{tab:setcomp_SM24}
\end{table}

{
\renewcommand{\arraystretch}{1.3}

\begin{table}[htbp]
 \begin{tabular}{*{6}{c} c c }
 $d$ & $r$ & $k$ & $\uX$ & $\ur$ & path & bases  & SM-defining set\\
 \hline
 2 & 0 & 1 & $\emptyset\subsetneq[2]$ & $(0)$ &\parbox[c]{1.1cm}{ \begin{tikzpicture}[scale=0.5]
 \draw[gray] (0,0) grid (2,0);
 \draw[orange, very thick, opacity=0.5] (0,0) -- (1,0) -- (2,0);
\end{tikzpicture} }& $\{\emptyset\}$ &  $\emptyset$ \\
   & 1 & 1 & $\emptyset\subsetneq[2]$ & $(1)$ &\parbox[c]{0.6cm}{\begin{tikzpicture}[scale=0.5]
 \draw[gray] (0,0) grid (1,1);
 \draw[orange, very thick, opacity=0.5] (0,0) -- (1,0) -- (1,1);
\end{tikzpicture}}& $\{\{1\}\}$ & $\{1 \}$\\
   & 1 & 2 & $\emptyset\subsetneq \{1\}\subsetneq[2]$ & $(0,1)$ &  & \\
   & 1 & 2 & $\emptyset\subsetneq \{2\}\subsetneq[2]$ & $(0,1)$ &\parbox[c]{0.6cm}{ \begin{tikzpicture}[scale=0.5]
 \draw[gray] (0,0) grid (1,1);
 \draw[orange, very thick, opacity=0.5] (0,0) -- (0,1) -- (1,1);
\end{tikzpicture}} & $\{\{1\}, \{2\}\}$ & $\{2 \}$ \\
   & 2 & 1 & $\emptyset\subsetneq[2]$ & $(2)$ &\parbox[c]{1em}{\begin{tikzpicture}[scale=0.5]
 \draw[gray] (0,0) grid (0,2);
 \draw[orange, very thick, opacity=0.5] (0,0) -- (0,2);
\end{tikzpicture}}& $\{\{1,2\}\}$ & $\{1,2 \}$ \\
\hline   
 3 & 0 & 1 & $\emptyset\subsetneq[3]$ & $(0)$ & \parbox[c]{1.6cm}{ \begin{tikzpicture}[scale=0.5]
 \draw[gray] (0,0) grid (3,0);
 \draw[orange, very thick, opacity=0.5] (0,0) -- (1,0) -- (3,0);
\end{tikzpicture} } & $\{\emptyset\}$ & $\emptyset$\\
   & 1 & 1 & $\emptyset\subsetneq[3]$ & $(1)$ & \parbox[c]{1.1cm}{\begin{tikzpicture}[scale=0.5]
 \draw[gray] (0,0) grid (2,1);
 \draw[orange, very thick, opacity=0.5] (0,0) -- (2,0) -- (2,1);
\end{tikzpicture}}& $\{\{1\}, \{2\},\{3\}\}$ & $\{3\}$\\
   & 1 & 2 & $\emptyset\subsetneq \{1\} \subsetneq[3]$ & $(0,1)$ & & & \\
   & 1 & 2 & $\emptyset\subsetneq \{2\} \subsetneq[3]$ & $(0,1)$ & & & \\
   & 1 & 2 & $\emptyset\subsetneq \{3\} \subsetneq[3]$ & $(0,1)$ &  \parbox[c]{1.1cm}{\begin{tikzpicture}[scale=0.5]
 \draw[gray] (0,0) grid (2,1);
 \draw[orange, very thick, opacity=0.5] (0,0) -- (1,0) -- (1,1) -- (2,1);
\end{tikzpicture}} &$\{\{1\}, \{2\}\}$ & $\{2 \}$ \\
   & 1 & 2 & $\emptyset\subsetneq \{1,2\} \subsetneq[3]$ & $(0,1)$ & & & \\
   & 1 & 2 & $\emptyset\subsetneq \{1,3\} \subsetneq[3]$ & $(0,1)$ & & & \\
   & 1 & 2 & $\emptyset\subsetneq \{2,3\} \subsetneq[3]$ & $(0,1)$ &  \parbox[c]{1.1cm}{\begin{tikzpicture}[scale=0.5]
 \draw[gray] (0,0) grid (2,1);
 \draw[orange, very thick, opacity=0.5] (0,0) -- (0,1) -- (2,1);
\end{tikzpicture}} &$\{\{1\}\}$ & $\{1 \}$\\
   & 2 & 1 & $\emptyset\subsetneq[3]$ & $(2)$ &  \parbox[c]{0.6cm}{\begin{tikzpicture}[scale=0.5]
 \draw[gray] (0,0) grid (1,2);
 \draw[orange, very thick, opacity=0.5] (0,0) -- (1,0) -- (1,2);
\end{tikzpicture}}& $\{\{1, 2\},\{1,3\}, \{2,3\}\}$ & $\{2,3 \}$\\
   & 2 & 2 & $\emptyset\subsetneq \{1\} \subsetneq[3]$ & $(0,2)$ & & & \\
   & 2 & 2 & $\emptyset\subsetneq \{2\} \subsetneq[3]$ & $(0,2)$ & & & \\
   & 2 & 2 & $\emptyset\subsetneq \{3\} \subsetneq[3]$ & $(0,2)$ &  \parbox[c]{0.6cm}{\begin{tikzpicture}[scale=0.5]
 \draw[gray] (0,0) grid (1,2);
 \draw[orange, very thick, opacity=0.5] (0,0) -- (0,2) -- (1,2);
\end{tikzpicture}} & $\{\{1, 2\}\}$ & $\{1,2 \}$ \\
   & 2 & 2 & $\emptyset\subsetneq \{1,2\} \subsetneq[3]$ & $(1,2)$ & & & \\
   & 2 & 2 & $\emptyset\subsetneq \{1,3\} \subsetneq[3]$ & $(1,2)$ & & & \\
   & 2 & 2 & $\emptyset\subsetneq \{2,3\} \subsetneq[3]$ & $(1,2)$ &   \parbox[c]{0.6cm}{\begin{tikzpicture}[scale=0.5]
 \draw[gray] (0,0) grid (1,2);
 \draw[orange, very thick, opacity=0.5] (0,0) -- (0,1) -- (1,1) -- (1,2);
\end{tikzpicture}} & $\{\{1,3\}, \{2,3\}\}$ & $\{1,3 \}$\\
   & 3 & 1 & $\emptyset \subsetneq[3]$ & $(3)$ &  \parbox[c]{0.1cm}{\begin{tikzpicture}[scale=0.5]
 \draw[gray] (0,0) grid (0,3);
 \draw[orange, very thick, opacity=0.5] (0,0) -- (0,3);
\end{tikzpicture}} & $\{\{1,2,3\} \}$ & $\{1,2,3 \}$\\
\end{tabular}
\caption{All loopless nested matroids on $[d]$ for $d = 2$ and $d = 3$, together with their matroidal double chain $(\uX, \ur)$, the corresponding NE-path in the grid, the set of bases, and the SM-defining set.
Rows with empty path and bases entries correspond to nested matroids that are isomorphic to a previously listed Schubert matroid under relabeling, and are included for completeness.}
\label{tab:loopless_SM_d2d3}
\end{table}

}

\clearpage


\printbibliography

\end{document}